\newtheoremstyle{mystyle}
  {}
  {}
  {}
  {}
  {\bfseries}
  {:}
  { }
  {}
\newtheorem{theorem}{Theorem}
\newtheorem{proposition}{Proposition}
\newtheorem{lemma}{Lemma}
\newtheorem*{definition}{Definition}
\providecommand{\keywords}[1]{\textbf{Keywords:} #1}
\providecommand{\ams}[1]{\textbf{AMS Classification:} #1}
\begin{document}

\title{The Maximax Minimax Quotient Theorem\thanks{This work was supported by an Early Stage Innovations grant from NASA’s Space Technology Research Grants Program, grant no. 80NSSC19K0209. This material is partially based upon work supported by the United States Air Force AFRL/SBRK under contract no. FA864921P0123.}}

\author{Jean-Baptiste Bouvier\thanks{Department of Aerospace Engineering, University of Illinois at Urbana-Champaign, USA, bouvier3@illinois.edu} \and Melkior Ornik \thanks{Department of Aerospace Engineering and Coordinated Science Laboratory,  University of Illinois at Urbana-Champaign, USA, mornik@illinois.edu}}

\maketitle

\begin{abstract}
    We present an optimization problem emerging from optimal control theory and situated at the intersection of fractional programming and linear max-min programming on polytopes.
    A na\"ive solution would require solving four nested, possibly nonlinear, optimization problems.
    Instead, relying on numerous geometric arguments we determine an analytical solution to this problem.
    In the course of proving our main theorem we also establish another optimization result stating that the minimum of a specific minimax optimization is located at a vertex of the constraint set.
\end{abstract}

\keywords{Optimization; Fractional programming; Max-min programming; Polytopes}

\ams{49K35;  90C32; 90C47}

\section{Introduction}

The field of fractional programming studies the optimization of a ratio of functions and made its debut in the 1960s with Charnes and Cooper \citep{charnes}. It has since then expanded to more complex and more general problems \citep{phuong}. However, outside of linear fractional programming, very few analytical results are available; the focus has now largely shifted to developing search algorithms \citep{abdel, pardalos_global}. 
In this paper we are interested in a specific fractional optimization problem introduced in \citep{SIAM_CT} and composed of four nested optimization problems. For this reason, a search algorithm would have a high computational cost and would be especially wasteful since an analytical solution exists.

Our ratio of interest features a max-min optimization \citep{pardalos_book} belonging to the setting of semi-infinite programming \citep{semi-infinite_programming}. Because of the infinite number of constraints, it is not possible to immediately apply the classical results of linear max-min theory \citep{max-min_programming} stating that the maximum is attained on the boundary of the constraint set.
Nonetheless, thanks to the specific geometry of our problem we are able to prove a very similar result, first mentioned as Theorem 3.1 in the authors' work \citep{SIAM_CT}. However, its proof is omitted from \citep{SIAM_CT}.

Armed with this preliminary result on max-min programming, we formulate and establish the Maximax Minimax Quotient Theorem. This result concerns the maximization of a ratio of a maximum and a minimax over two polytopes. In the special case where these polytopes are symmetric, this result reduces to Theorem 3.2 of \citep{SIAM_CT}, whose the proof was again omitted for length concerns.

The remainder of this paper is organized as follows.
Section~\ref{sec:prelim} establishes the existence of the Maximax Minimax Quotient and proves a preliminary optimization result.
Section~\ref{sec:Maximax Minimax} states our central theorem and provides its proof.
Section~\ref{sec:lemmas} gathers all the lemmas involved in the proof of the Maximax Minimax Quotient Theorem.
Section~\ref{sec:continuity} justifies the continuity of two maxima functions used during the proof of our main result.
Finally, Section~\ref{sec:example} illustrates the proof of our theorem on a simple example.

\emph{Notation:} 
We use $\partial X$ to denote the boundary of a set $X$ and its interior is denoted $X^\circ := X \backslash \partial X$.
In $\mathbb{R}^n$ we denote the unit sphere with $\mathbb{S} := \{ x \in \mathbb{R}^n : \|x\| = 1\}$ and the ball of radius $\varepsilon$ centered on $x$ with $B_\varepsilon(x) := \big\{ y \in \mathbb{R}^n : \|y - x\| \leq \varepsilon \big\}$.
The scalar product of vectors is denoted by $\langle \cdot, \cdot \rangle$.
For $x \in \mathbb{R}^n$ and $y \in \mathbb{R}^n$ both nonzero we denote as $\widehat{x, y}$ the signed angle from $x$ to $y$ in the 2D plane containing both of them. We take the convention that the angles are positive when going in the clockwise orientation.

\section{Preliminaries}\label{sec:prelim}

\begin{definition}
    A \emph{polytope} in $\mathbb{R}^n$ is a compact intersection of finitely many half-spaces.
\end{definition}
Thus, this work only considers convex polytopes.
If $X$ and $Y$ are two nonempty polytopes in $\mathbb{R}^n$ with $-X \subset Y^\circ$, and $d \in \mathbb{S}$, we define the \emph{Maximax Minimax Quotient} as
\begin{equation}\label{eq:r_(X,Y)}
    r_{X,Y}(d) := \frac{\underset{x\, \in\, X,\ y\, \in\, Y}{\max} \big\{ \|x + y\| : x + y \in \mathbb{R}^+d \big\} }{ \underset{x\, \in\, X}{\min} \big\{ \underset{y\, \in\, Y}{\max} \big\{ \|x + y\| : x + y \in \mathbb{R}^+d \big\} \big\} }.
\end{equation}
The objective of the Maximax Minimax Quotient Theorem is to determine the direction $d$ that maximizes $r_{X,Y}(d)$.
Note that in the numerator of \eqref{eq:r_(X,Y)}, $x$ and $y$ are chosen together to satisfy the constraint $x+y \in \mathbb{R}^+ d$, while in the denominator this constraint only applies to $y$.
Before starting the actual proof of this theorem, we first need to justify the existence of the minimum and the maxima appearing in \eqref{eq:r_(X,Y)}.

\begin{proposition}\label{prop: r_X,Y well-defined}
    Let $X$, $Y$ be two nonempty polytopes in $\mathbb{R}^n$ with $-X \subset Y^\circ$, $\dim Y = n$ and $d \in \mathbb{S}$. Then,
    \begin{enumerate}[(i)]
        \item $\underset{x\, \in\, X,\, y\, \in\, Y}{\max} \big\{ \|x + y\| : x + y \in \mathbb{R}^+d \big\}$ exists,
        \item $\lambda^*(x,d) := \underset{y\, \in\, Y}{\max} \big\{ \|x + y\| : x + y \in \mathbb{R}^+d \big\}$ exists for all $x \in X$,
        \item $\underset{x\, \in\, X}{\min} \big\{ \lambda^*(x,d) \big\}$ exists,
        \item and $\underset{x\, \in\, X}{\min} \big\{\lambda^*(x,d) \big\} > 0$.
    \end{enumerate}

\end{proposition}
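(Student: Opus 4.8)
The plan rests on one elementary identity: since $d\in\mathbb{S}$, every point of the ray $\mathbb{R}^+d$ equals $\lambda d$ for a unique $\lambda\ge 0$, and then $\|\lambda d\|=\lambda$. Hence in each of the four quantities the objective $\|x+y\|$, restricted to the feasible set, is just the scalar $\lambda$ with $x+y=\lambda d$, so every optimization is really an optimization of a ``ray length'' $\lambda$ over a set of admissible lengths. The proof then reduces to two routine ingredients: that the relevant admissible sets are \emph{nonempty} --- which is exactly what $-X\subset Y^\circ$ provides --- and that they are compact, after which attainment is automatic. (Note that $X\neq\emptyset$ together with $-X\subset Y^\circ$ already forces $Y^\circ\neq\emptyset$, i.e.\ $\dim Y=n$, so the quantifier ``for all $x\in X$'' in (ii) ranges over a nonempty set and every interior below is genuine.)

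For (ii), fix $x\in X$ and substitute $y=\lambda d-x$: feasibility becomes $\lambda\ge0$ and $\lambda d-x\in Y$, so the admissible lengths form $\Lambda(x):=\{\lambda\ge0 : \lambda d-x\in Y\}$. This set is nonempty, since $-x\in Y^\circ$ gives $-x+\varepsilon d\in Y$ for all small $\varepsilon\ge0$; it is closed, being $[0,\infty)$ intersected with the preimage of the closed set $Y$ under the continuous map $\lambda\mapsto\lambda d-x$; and it is bounded because $Y$ is. A nonempty compact subset of $\mathbb{R}$ has a maximum, and the objective equals $\lambda$ on $\Lambda(x)$, so $\lambda^*(x,d)=\max\Lambda(x)$ exists. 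Part (i) is identical with the compact polytope $X+Y$ (Minkowski sum) in place of $Y$: the admissible lengths $\{\lambda\ge0 : \lambda d\in X+Y\}$ form a nonempty ($0\in X+Y$, as $-x\in Y$ for any $x\in X$) compact interval on which the objective equals $\lambda$, so the maximum in (i) exists.

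For (iii) and (iv), observe first that $x\mapsto\lambda^*(x,d)$ is continuous on $X$; this is one of the maximum functions whose continuity is established in Section~\ref{sec:continuity}, and equivalently, from an H-representation $Y=\{z : \langle a_i,z\rangle\le b_i,\ 1\le i\le m\}$ and the computation above, $\lambda^*(x,d)=\min\{(b_i+\langle a_i,x\rangle)/\langle a_i,d\rangle : \langle a_i,d\rangle>0\}$, a minimum of finitely many affine functions of $x$, the index set being nonempty because $Y$ is bounded. Continuity on the compact set $X$ yields attainment of $\min_{x\in X}\lambda^*(x,d)$, which is (iii). For (iv), the immediate pointwise bound $\lambda^*(x,d)>0$ (from $-x\in Y^\circ$) is not enough: we need it uniformly in $x$. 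Since $-X$ is compact, $\partial Y$ is closed, and $-X\cap\partial Y=\emptyset$ (because $-X\subset Y^\circ$), the distance $\delta:=\mathrm{dist}(-X,\partial Y)$ is strictly positive; then for every $x\in X$ the ball $B_{\delta/2}(-x)$ is connected, contains the interior point $-x$, and avoids $\partial Y$, hence lies in $Y^\circ$, so $-x+(\delta/2)d\in Y$ and $\lambda^*(x,d)\ge\delta/2$. Therefore $\min_{x\in X}\lambda^*(x,d)\ge\delta/2>0$.

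I expect the only non-cosmetic step to be (iv): passing from the pointwise positivity $\lambda^*(x,d)>0$ to a lower bound valid \emph{uniformly} over $x\in X$ is where both the compactness of $X$ and the \emph{strict} containment $-X\subset Y^\circ$ are truly used, and the distance-to-the-boundary estimate is the clean way to marry them. Everything else follows the standard ``nonempty $+$ compact $\Rightarrow$ attained'' template, the only real care being the nonemptiness of the feasible sets in (i) and (ii), which is again furnished by $-X\subset Y^\circ\subset Y$ together with the boundedness of $Y$.
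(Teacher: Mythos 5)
Your proof is correct. Parts (i) and (ii) follow essentially the paper's route (a nonempty compact feasible set plus a continuous objective; your reparametrization by the ray length $\lambda$ and the use of the Minkowski sum $X+Y$ in (i) are cosmetic variants), but you diverge genuinely on (iii) and (iv). For (iii) the paper obtains continuity of $\lambda^*(\cdot,d)$ from the Berge Maximum Theorem (Lemma~\ref{lemma: lambda continuous}), which in turn rests on the hemicontinuity verification of Lemma~\ref{lemma:phi continuous}; your closed form $\lambda^*(x,d)=\min\big\{(b_i+\langle a_i,x\rangle)/\langle a_i,d\rangle : \langle a_i,d\rangle>0\big\}$ --- valid because $0\in\Lambda(x)$ renders the lower constraints vacuous and boundedness of $Y$ makes the index set nonempty --- gives continuity (indeed concavity and piecewise affinity in $x$) for free, which is considerably more elementary for the purposes of this proposition. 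The trade-off is that the paper's machinery delivers joint continuity in $(x,d)$, which is reused elsewhere (e.g.\ in Lemma~\ref{lemma: continuity of x_N}), whereas your formula gives continuity in $x$ for fixed $d$ immediately but would need an extra argument for joint continuity across changes of the active index set. For (iv) the paper argues pointwise: the maximizer $y^*(x,d)$ lies on $\partial Y$ while $-x\in Y^\circ$, so $\|x+y^*(x,d)\|>0$ for every $x$, in particular at the minimizer furnished by (iii); your uniform bound $\lambda^*(x,d)\ge\tfrac{1}{2}\,\mathrm{dist}(-X,\partial Y)$ is equally valid and self-contained, though your remark that the pointwise bound ``is not enough'' is slightly too pessimistic --- combined with the attainment in (iii) it is exactly what the paper uses. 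Both routes are sound; yours buys elementarity and an explicit quantitative lower bound, the paper's buys continuity lemmas that it needs again later.
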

\begin{proof}
    (i) Let $S := \big\{ (x, y) \in X \times Y : x + y \in \mathbb{R}^+ d\big\}$. Set $S$ is a closed subset of the compact set $X \times Y$, so $S$ is compact. Since $X$ is nonempty, we take $x \in X$. Using $-X \subset Y$ we have $-x \in Y$ and $x + (-x) = 0 \in \mathbb{R}^+ d$. Then, $(x,-x) \in S$, so $S$ is nonempty. Function $f : S \rightarrow \mathbb{R}$ defined as $f(x,y) := \|x + y\|$ is continuous, so it reaches a maximum over $S$.
       
    \vspace{2mm}
        
    (ii) For $x \in X$ define $S(x) := \big\{ y \in Y : x + y \in \mathbb{R}^+ d \big\}$. Since $S(x)$ is a closed subset of the compact set $Y$, $S(x)$ is compact. Since $-X \subset Y$, we have $-x \in S(x)$ and so $S(x) \neq \emptyset$. Function $f_x : S(x) \rightarrow \mathbb{R}$ defined as $f_x(y) := \|x + y\|$ is continuous, so it reaches a maximum over $S(x)$, i.e., $\lambda^*$ exists. 
    
    \vspace{2mm}    
    
    (iii) For $x \in X$ and $d \in \mathbb{S}$, the argument of $\lambda^*(x,d)$ is uniquely defined as $y^*(x,d) := \lambda^*(x,d)d - x$ since $\|d\| = 1$ and
    \begin{equation}\label{eq:y^*(x,d)}
        y^*(x, d) = \arg \underset{y\, \in\, Y}{\max} \big\{ \|x + y\| : x + y \in \mathbb{R}^+ d \big\}.
    \end{equation}
    Lemma~\ref{lemma: lambda continuous} shows that $\lambda^*$ is continuous in $x$ and $d$, so $y^*$ is also continuous in $x$ and $d$.
    Then, function $f : X \rightarrow \mathbb{R}$ defined as $f(x) := \|x + y^*(x,d)\|$ is continuous, so it reaches a minimum over the compact and nonempty set $X$. 
        
    \vspace{2mm}
    
    (iv) Note that $y^*(x,d) \in \partial Y$ for all $x \in X$. Indeed, assume for contradiction purposes that there exists $\varepsilon > 0$ such that $B_\varepsilon\big(y^*(x,d) \big) \in Y$. We required $\dim Y = n$ to make this ball of full dimension, so that $z := y^*(x,d) + \varepsilon d \in Y$. Then, $x + z = \big(\lambda^*(x,d) + \varepsilon) d \in \mathbb{R}^+ d$ and $\|x + z\| = \lambda^*(x,d) + \varepsilon > \lambda^*(x,d)$ contradicting the optimality of $\lambda^*$.
    Thus, $y^*(x,d) \in \partial Y$. Since $-X \subset Y^\circ$, we have $\|x + y^*(x,d)\| > 0$ for all $x \in X$. 
\end{proof}

Then, with the assumptions of Proposition~\ref{prop: r_X,Y well-defined} the Maximax Minimax Quotient is well-defined.
The proof of our main theorem relies on another optimization result stating that the argument of the minimum in \eqref{eq:r_(X,Y)} lies at a vertex of $X$.

\begin{definition}
    A \emph{vertex} of a set $X \subset \mathbb{R}^n$ is a point $x \in X$ such that if there are $x_1 \in X$, $x_2 \in X$ and $\lambda \in [0,1]$ with $x = \lambda x_1 + (1-\lambda)x_2$, then $x = x_1 = x_2$. 
\end{definition}
With this definition, a vertex of a polytope corresponds to the usual understanding of a vertex of a polytope.

\begin{theorem}\label{thm:minimum on the vertices}
    Let $d \in \mathbb{S}$, $X$ and $Y$ two polytopes of $\mathbb{R}^n$ with $-X \subset Y$ and $\dim Y = n$. Then, there exists a vertex $v$ of $X$ where $\underset{x\, \in\, X}{\min}\big\{ \lambda^*(x,d) \big\}$ is reached.
\end{theorem}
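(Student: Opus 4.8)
The plan is to reduce the statement to the elementary fact that a concave function on a polytope attains its minimum at one of its (finitely many) vertices. Concretely, I would first rewrite $\lambda^*(\cdot,d)$ in a form that makes its concavity transparent, then verify concavity using only the convexity of $Y$, and finally combine concavity with the representation of $X$ as the convex hull of its vertices. Note that the minimum itself exists: $\lambda^*(x,d)$ is well-defined for every $x \in X$ by (the argument of) Proposition~\ref{prop: r_X,Y well-defined}(ii), which uses only $-X \subset Y$, it is continuous in $x$ by Lemma~\ref{lemma: lambda continuous}, and $X$ is compact.

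For the reformulation, since $\|d\| = 1$ a point $x + y$ lies on the ray $\mathbb{R}^+ d$ exactly when $x + y = \lambda d$ for some $\lambda \ge 0$, in which case $\|x+y\| = \lambda$. Hence
\[
  \lambda^*(x,d) = \max\bigl\{ \lambda \ge 0 : \lambda d - x \in Y \bigr\},
\]
the feasible set being nonempty (it contains $\lambda = 0$ because $-x \in Y$) and compact (because $Y$ is). To prove concavity, fix $x_1, x_2 \in X$, $t \in [0,1]$, put $x := t x_1 + (1-t) x_2$ and $\lambda_i := \lambda^*(x_i,d)$. Then $\lambda_i d - x_i \in Y$, so convexity of $Y$ gives $t(\lambda_1 d - x_1) + (1-t)(\lambda_2 d - x_2) = \bigl(t\lambda_1 + (1-t)\lambda_2\bigr) d - x \in Y$, with $t\lambda_1 + (1-t)\lambda_2 \ge 0$. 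Thus $t\lambda_1 + (1-t)\lambda_2$ is feasible for $x$, so $\lambda^*(x,d) \ge t\,\lambda^*(x_1,d) + (1-t)\,\lambda^*(x_2,d)$: the map $x \mapsto \lambda^*(x,d)$ is concave on $X$.

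To conclude, let $m := \min_{x \in X} \lambda^*(x,d)$, attained at some $x^\star \in X$. Since $X$ is a polytope it is the convex hull of its finitely many vertices $v_1, \dots, v_k$, so $x^\star = \sum_j \mu_j v_j$ with $\mu_j \ge 0$ and $\sum_j \mu_j = 1$. Concavity then yields
$m = \lambda^*(x^\star,d) \ge \sum_j \mu_j\,\lambda^*(v_j,d) \ge \min_j \lambda^*(v_j,d) \ge m$,
so equality holds throughout and $\lambda^*(v_j,d) = m$ for at least one vertex $v_j =: v$, which is the claimed vertex.

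I do not expect a genuine obstacle here; the only points demanding care are the reformulation of $\lambda^*$ and checking that every hypothesis actually used is the weaker one stated in the theorem ($-X \subset Y$ rather than $-X \subset Y^\circ$), together with the degenerate case in which $X$ is a single point (then that point is a vertex by the definition given). If one prefers a self-contained geometric argument in place of the ``concave minimum on a polytope'' fact, the same convexity/support estimate shows that whenever the minimizer lies in the relative interior of a positive-dimensional face of $X$ one can translate along that face without increasing $\lambda^*$, and iterating down through faces of decreasing dimension terminates at a vertex; the cost of that route is purely the bookkeeping over faces of all dimensions.
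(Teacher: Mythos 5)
Your proof is correct, and it takes a genuinely different (and cleaner) route than the paper's. The paper fixes a minimizer $x^*$, argues it lies on $\partial X$, passes to the lowest-dimensional face of $X$ containing it, and moves along a segment toward a vertex $v$ of that face; the key step is a proof by contradiction in which the two optimal points $y^*\big(x(-\varepsilon)\big)$ and $y^*\big(x(\alpha_0)\big)$ are combined convexly to produce a feasible $z \in Y$ with $\langle x^* + z, d\rangle > \lambda^*(x^*,d)$, contradicting optimality at $x^*$. That contradiction is exactly a pointwise instance of the concavity you establish globally: your reformulation $\lambda^*(x,d) = \max\{\lambda \ge 0 : \lambda d - x \in Y\}$ makes the mechanism (convexity of $Y$ transported through the ray constraint) explicit once and for all, and the conclusion then follows from the standard fact that a concave function on a polytope is minimized at a vertex. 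What your version buys: it is shorter, it avoids the paper's somewhat informal step asserting $x^* \in \partial X$ and the face-by-face bookkeeping, it visibly uses only $-X \subset Y$ and the convexity and compactness of $Y$ (the hypothesis $\dim Y = n$ plays no role), and your final chain of inequalities actually gives the slightly stronger statement $\min_{x \in X}\lambda^*(x,d) = \min_j \lambda^*(v_j,d)$ --- which also shows the minimum exists without invoking the continuity of $\lambda^*$, since concavity bounds $\lambda^*$ from below by its minimum over the finitely many vertices. The paper's argument, in exchange, records the finer geometric fact that $L$ is constant on the whole segment from $x^*$ to a vertex of its minimal face, though nothing later depends on this.
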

\begin{proof}
    According to Proposition~\ref{prop: r_X,Y well-defined} the minimum of $\lambda^*$ exists. Then, let $x^* \in X$ such that $\lambda^*(x^*, d) = \underset{x\, \in\, X}{\min}\big\{ \lambda^*(x,d) \big\}$, i.e., $\|y^*(x^*) + x^*\| = \underset{x\, \in\, X}{\min}\ \|y^*(x) + x\|$.
    Since $-x^*$ must minimize the distance between itself and $y^*(x^*) \in \partial Y$, with $-X \subset Y$ obviously $x^* \in \partial X$.
    Assume now that $x^*$ is not on a vertex of $\partial X$. Let $S_x$ be the surface of lowest dimension in $\partial X$ such that $x^* \in S_x$ and $\dim S_x \geq 1$.
    
    Let $v$ be a vertex of $S_x$ and $x(\alpha) := x^* + \alpha (v - x^*)$ for $\alpha \in \mathbb{R}$. Notice that $x(0) = x^*$ and $x(1) = v$. Due to the choice of $v$, the convexity of $S_x$ and $x^*$ not being a vertex, there exists $\varepsilon > 0$ such that $x(\alpha) \in S_x$ for all $\alpha \in [-\varepsilon, 1]$.
    We also define the lengths $L(\alpha) := \|y^*\big(x(\alpha)\big) + x(\alpha)\|$ and $L^* := L(0)$.
    
    Since $\|d\| = 1$ and $y^*\big(x(\alpha)\big) + x(\alpha) \in \mathbb{R}^+ d$, we have $L(\alpha) = \langle y^*\big(x(\alpha)\big) + x(\alpha), d \rangle$.
    By definition of $x^*$, we know that $L^* \leq L(\alpha)$ for all $\alpha \in [-\varepsilon, 1]$. For contradiction purposes assume that there exists $\alpha_0 \in (0, 1]$ such that $L^* < L(\alpha_0)$. We introduce the convexity coefficient $\beta := \frac{\alpha_0}{\alpha_0 + \varepsilon} > 0$ and then
    \begin{align*}
        L^* &= \beta L^* + (1 - \beta)L^* < \beta L(-\varepsilon) + (1-\beta)L(\alpha_0) \\
        &< \beta \langle y^*\big(x(-\varepsilon)\big) + x(-\varepsilon), d \rangle + (1-\beta) \langle y^*\big(x(\alpha_0)\big) + x(\alpha_0), d \rangle  = \langle z + x^*, d\rangle,
    \end{align*}
    with $z := \beta y^*\big(x(-\varepsilon)\big) + (1-\beta) y^*\big(x(\alpha_0)\big)$. Indeed, note that $\beta x(-\varepsilon) + (1-\beta) x(\alpha_0) = x^*$, and $z + x^* \in \mathbb{R}^+ d$.
    Note that $L^* = \underset{y\, \in\, Y}{\max} \big\{ \langle x^* + y, d\rangle : x^* + y \in \mathbb{R}^+ d \big\}$, but $L^* < \langle x^* + z, d\rangle$.
    Given that $z \in Y$ by convexity of $Y$ and $x^* + z \in \mathbb{R}^+ d$, we have reached a contradiction.
    Thus, there is no $\alpha_0 \in (0,1]$ such that $L^* < L(\alpha_0)$. Therefore, for all $\alpha \in [0,1]$, $L(\alpha) = L^*$. By taking $\alpha = 1$, we have $x(\alpha) = v$, so the minimum $L^*$ is also reached on the vertex $v$ of $X$. 
\end{proof}

We have now all the preliminary results necessary to state our central theorem.

\section{The Maximax Minimax Quotient Theorem}\label{sec:Maximax Minimax}

\begin{theorem}[Maximax Minimax Quotient Theorem]\label{thm:varying x_M and x_N}
    If $X$ and $Y$ are two polytopes in $\mathbb{R}^n$ with $-X \subset Y^\circ$, $\dim X = 1$, $\partial X = \{x_1, x_2\}$ with $x_2 \neq 0$ and $\dim Y = n$, then $\underset{d\, \in\, \mathbb{S}}{\max}\ r_{X,Y}(d) = \max \big\{ r_{X,Y}(x_2), r_{X,Y}(-x_2) \big\}$.
\end{theorem}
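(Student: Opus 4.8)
The plan is to reduce \eqref{eq:r_(X,Y)} to a single one-parameter optimization and then analyze it piece by piece. First, by Lemma~\ref{lemma: lambda continuous} the map $\lambda^*$ is continuous, and by Section~\ref{sec:continuity} so is the numerator of \eqref{eq:r_(X,Y)}; hence the denominator $d\mapsto\min_{x\in X}\lambda^*(x,d)$ is continuous and, by Proposition~\ref{prop: r_X,Y well-defined}(iv), strictly positive, so $r_{X,Y}$ is continuous on the compact sphere $\mathbb{S}$ and attains its maximum at some $d^\star\in\mathbb{S}$. If $d^\star$ is a positive or negative multiple of $x_2$ there is nothing to prove, so assume it is not.

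The next step is to reduce to the planar case $n=2$. Let $\Pi$ be the $2$-plane through the origin spanned by $d^\star$ and $x_2$. Since $X$ is a segment lying on the line $\mathbb{R}x_2\subset\Pi$, we have $X\subset\Pi$, and hence for every $d\in\Pi\cap\mathbb{S}$ all the rays $\mathbb{R}^+d$ and the shifted rays $\mathbb{R}^+d-x$ ($x\in X$) occurring in \eqref{eq:r_(X,Y)} lie in $\Pi$; consequently $r_{X,Y}(d)$ depends, for such $d$, only on the planar polytope $Y\cap\Pi$, i.e.\ $r_{X,Y}(d)=r_{X,Y\cap\Pi}(d)$. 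Since $0\in Y^\circ$, the pair $(X,Y\cap\Pi)$ satisfies the hypotheses of the theorem inside $\Pi\cong\mathbb{R}^2$, and because $d^\star\in\Pi$ realizes the global maximum while $\pm x_2/\|x_2\|\in\Pi$, we get $\max_{d\in\mathbb{S}}r_{X,Y}(d)=r_{X,Y\cap\Pi}(d^\star)\leq\max_{d\in\Pi\cap\mathbb{S}}r_{X,Y\cap\Pi}(d)$; so once the statement is proved for $n=2$ the desired identity follows, the reverse inequality being trivial. We may therefore assume $n=2$.

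For $n=2$, after a rotation put $x_2=(\|x_2\|,0)$ and $d=(\cos\theta,\sin\theta)$. By Theorem~\ref{thm:minimum on the vertices} the denominator of \eqref{eq:r_(X,Y)} equals $\min\{\lambda^*(x_1,d),\lambda^*(x_2,d)\}$, the smaller of the radial functions from the origin of the two translates $x_1+Y$ and $x_2+Y$, while the numerator is the radial function of $X+Y=\mathrm{conv}\big((x_1+Y)\cup(x_2+Y)\big)$, whose boundary consists of an outer arc of $\partial(x_1+Y)$, an outer arc of $\partial(x_2+Y)$, and two ``bridge'' segments parallel to $x_2-x_1$. This makes $\theta\mapsto r_{X,Y}(\theta)$ piecewise of an explicit elementary form --- constant on the intervals where a single facet of $Y$ is active, and an affine function of $\cot(\theta-\psi_0)$ (for a fixed $\psi_0$ determined by $x_2-x_1$) on the bridge intervals --- hence monotone on each piece. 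The problem then reduces to showing that on each of the two closed semicircles $\theta\in[0,\pi]$ and $\theta\in[\pi,2\pi]$ the function $r_{X,Y}$ attains its maximum at an endpoint, equivalently that it is quasi-convex there; granting this, $\max_{\theta\in[0,\pi]}r_{X,Y}(\theta)=\max\{r_{X,Y}(0),r_{X,Y}(\pi)\}$ and likewise on $[\pi,2\pi]$, so $\max_{d\in\mathbb{S}}r_{X,Y}(d)=\max\{r_{X,Y}(x_2),r_{X,Y}(-x_2)\}$. Proving this quasi-convexity is the purpose of the lemmas of Section~\ref{sec:lemmas}: they track how the active facet of $Y$ and the exit point of the ray move as $\theta$ increases, which fixes the signs of the successive monotone pieces and forces $r_{X,Y}$ to first decrease and then increase along each semicircle.

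The hard part will be exactly that last quasi-convexity claim. Since $\theta\mapsto r_{X,Y}(\theta)$ is only piecewise smooth, and its breakpoints come in several flavours --- the active facet of $Y$ changes, the minimizing translate in the denominator switches from $x_1+Y$ to $x_2+Y$, or the numerator's exit point passes from an arc onto a bridge --- a naive first-order stationarity argument is blind to them, and one must instead verify geometrically that the monotone pieces cannot occur in the pattern ``increase, decrease, increase'' on a semicircle; this is where essentially all of the work, and all of the lemmas of Section~\ref{sec:lemmas}, go. A secondary but routine point is to confirm that the numerator and $\theta\mapsto\min\{\lambda^*(x_1,d),\lambda^*(x_2,d)\}$ remain continuous across these breakpoints --- the role of Section~\ref{sec:continuity}.
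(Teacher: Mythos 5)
Your proposal is correct and follows essentially the same route as the paper: restrict to the two-dimensional plane containing $X$ (and the origin), parametrize $d$ by an angle, observe that $r_{X,Y}$ is constant while all rays meet a single face of $\partial Y$ and monotone during vertex crossings, and conclude that the ``decrease then increase'' pattern on each semicircle forces the maximum to occur at $d=\pm x_2/\|x_2\|$ --- which is precisely the content of Lemmas~\ref{lemma: x_N^*(d) and x_M^*(d) cst on faces}--\ref{lemma: 0 notin X} that you defer to, just as the paper's own proof does. The only cosmetic differences are that you fix a maximizer $d^\star$ first and cut $Y$ by the plane through $d^\star$ and $x_2$ (the paper simply quantifies over all planes $\mathcal{P}\supset X$), and that you phrase the numerator via the Minkowski sum $X+Y$ rather than via the selector $x_N^*\in\{x_1,x_2\}$; these are equivalent viewpoints.
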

\begin{proof}
    Since $\dim X = 1$, its extremities $x_1$ and $x_2$ are different, so at least one of them is nonzero. Then, imposing $x_2 \neq 0$ does not restrain the generality of our result.
    Following Proposition~\ref{prop: r_X,Y well-defined}, $r_{X,Y}$ is well-defined. Reusing $y^*$ defined in \eqref{eq:y^*(x,d)}, we introduce $x_M^*(d) := \arg\underset{x\, \in\, X}{\min} \big\{ \|x + y^*(x,d)\| \big\}$ and $x_N^*(d) := \arg \underset{x\, \in\, X}{\max} \big\{ \|x + y^*(x,d)\| : x + y^*(x,d) \in \mathbb{R}^+ d \big\}$. 
    According to Theorem~\ref{thm:minimum on the vertices}, $x_M^*(d) \in \partial X$ for all $d \in \mathbb{S}$ and following Lemma~\ref{lemma: continuity of x_N}, $x_N^*$ is a continuous function of $d$.
    For some $d \in \mathbb{S}$ the $\arg\min$ and $\arg\max$ in the definitions of $x_M^*$ and $x_N^*$ might not be unique; if so we take the arguments ensuring that $x_M^*(d) \in \partial X$ and that $x_N^*$ is continuous.
    We also define $y_N^*(d) := y^*\big( x_N^*(d), d\big)$ and $y_M^*(d) := y^*\big( x_M^*(d), d\big)$. Then,
    \begin{equation*}
        r_{X,Y}(d) = \frac{\underset{y\, \in\, Y}{\max} \big\{ \|y + x_N^*(d)\| : y + x_N^*(d) \in \mathbb{R}^+d \big\} }{ \underset{y\, \in\, Y}{\max} \big\{ \|y + x_M^*(d)\| : y + x_M^*(d) \in \mathbb{R}^+d \big\} } = \frac{\|x_N^*(d) + y_N^*(d)\|}{\|x_M^*(d) + y_M^*(d)\|}.
    \end{equation*}
    
    Since $\dim X = 1$, we can take $\mathcal{P}$ to be a two-dimensional plane containing $X$. Then, we will study how $r_{X,Y}(d)$ varies when $d$ takes values in $\mathbb{S} \cap \mathcal{P}$.
    We introduce the signed angles $\alpha := \widehat{d, \partial Y}$ and $\beta := \widehat{x_2, d}$.  These angles are represented on Figure~\ref{fig:angles illustration} and they take value in $[0, 2\pi)$.
    We parametrize all directions $d \in \mathbb{S} \cap \mathcal{P}$ by the angle $\beta$. Then, we will study how $r_{X,Y}(d)$ varies when $\beta \in [0, 2\pi)$.
    
    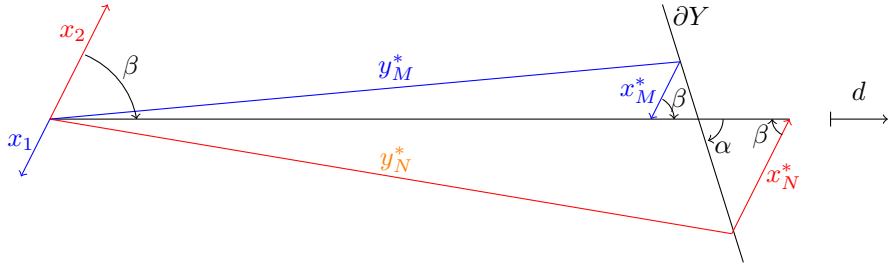
\begin{figure}[htbp!]
        \centering
        \begin{tikzpicture}[scale = 0.76]
            
            \draw[|->] (11.5, 0) -- (12.5, 0);
            \node at (12, 0.5) {$d$};
            \draw (-2, 0) -- (10.8, 0);
            
            \draw (8.6, 2) -- (10, -2.5);
            \node at (9.1, 1.8) {$\partial Y$};
            
            \draw[<-, blue] (-2.5, -1) -- (-2, 0);
            \draw[->, red] (-2, 0) -- (-1, 2);
            \node at (-1.6, 1.5) {\textcolor{red}{$x_2$}};
            \node at (-2.5, -0.4) {\textcolor{blue}{$x_1$}};
            
            \draw[blue] (-2,0) -- (8.9, 1);
            \node at (4, 0.9) {\textcolor{blue}{$y_M^*$}};
            \draw[->, blue] (8.9, 1) -- (8.4, 0);
            \node at (8.2, 0.5) {\textcolor{blue}{$x_M^*$}};
            
            \draw[red] (-2,0) -- (9.8,-2);
            \node at (4,-0.7) {\textcolor{orange}{$y_N^*$}};
            \draw[->, red] (9.8, -2) -- (10.8, 0);
            \node at (10.7, -1) {\textcolor{red}{$x_N^*$}};
            
            \draw[<-] (-0.5, 0) arc (10:67:1.5);
            \node at (-0.6, 0.9) {$\beta$};
            \draw[<-] (8.8, 0) arc (0:60:0.4);
            \node at (8.88, 0.32) {$\beta$};
            \draw[<-] (10.5, 0) arc (180:245:0.3);
            \node at (10.3, -0.3) {$\beta$};
            \draw[->] (9.65, 0) arc (0:-70:0.4);
            \node at (9.65, -0.5) {$\alpha$};
            
        \end{tikzpicture}
        \caption{Illustration of $y_N^*$, $x_N^*$, $y_M^*$ and $x_M^*$ for a direction $d$ parametrized by $\beta$.}
        \label{fig:angles illustration}
    \end{figure}
    
    We first establish in Lemma~\ref{lemma: x_N^*(d) and x_M^*(d) cst on faces} that $x_N^*(d)$ and $x_M^*(d)$ are constant, different and both belong in $\partial X$ when $y_M^*(d)$, $d$ and $y_N^*(d)$ all intersect the same face of $\partial Y$, as illustrated on Figure~\ref{fig:angles illustration}.
    In these situations, Lemma~\ref{lemma:r(d) cst on faces} shows that the ratio $r_{X,Y}$ is constant. Thus, $r_{X,Y}$ can only change when one of the three rays intersects a different face of $\partial Y$ than the other two. We refer to these situations as vertex crossings. Lemma~\ref{lemma: v_pi and v_2pi} introduces the vertices $v_\pi$ and $v_{2\pi}$.
    
    We study the crossing of vertices before $v_\pi$ in Lemma~\ref{lemma: x_N^*(d) and x_M^*(d) cst before v_pi}. During these crossings Lemma~\ref{lemma: crossing before v_pi} shows that $r_{X,Y}$ decreases as $\beta$ increases.
    Lemma~\ref{lemma: crossing v_pi} states that $r_{X,Y}$ reaches a local minimum during the crossing of $v_\pi$.
    As $\beta$ increases between $v_\pi$ and $\pi$, Lemmas~\ref{lemma: x_N^*(d) and x_M^*(d) cst after v_pi} and \ref{lemma: crossing after v_pi} prove that $r_{X,Y}$ increases during the crossing of vertices. Finally, Lemma~\ref{lemma: beta > pi} completes the revolution by showing that $r_{X,Y}$ decreases after $\beta = \pi$ until a local minimum at $v_{2\pi}$ and then increases again until $\beta = 2\pi$. Thus, the directions $d \in \mathcal{P} \cap \mathbb{S}$ maximizing $r_{X,Y}(d)$ are collinear with the set $X$. Note that Figure~\ref{fig:angles illustration} implicitly assumes that $0 \in X$. Lemma~\ref{lemma: 0 notin X} proves that even if $0 \notin X$ all above results still hold.
    Therefore, $\underset{d\, \in\, \mathbb{S}}{\max}\ r_{X,Y}(d) = \underset{\mathcal{P}}{\max} \big\{ \underset{d\, \in\, \mathcal{P}\, \cap\, \mathbb{S} }{\max} r_{X,Y}(d) \big\} = \max\big\{ r_{X,Y}(x_2), r_{X,Y}(-x_2) \big\}$. 
\end{proof}

In the special case where $X$ and $Y$ are symmetric polytopes, this result reduces to Theorem 3.2 of \citep{SIAM_CT}. Indeed, $r_{X,Y}$ becomes an even function which leads to $r_{X,Y}(x_2) = r_{X,Y}(-x_2)$.

\section{Supporting Lemmata}\label{sec:lemmas}

In this section we establish all the lemmas involved in the proof of the Maximax Minimax Quotient Theorem.

\begin{lemma}\label{lemma: x_N^*(d) and x_M^*(d) cst on faces}
    If $d$, $y_N^*(d)$ and $y_M^*(d)$ all intersect the same face of $\partial Y$, then $x_N^*(d)$ and $x_M^*(d)$ are constant, different and both belong to $\partial X$.
\end{lemma}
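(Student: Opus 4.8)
The plan is to exploit that, as long as one fixed face $F$ of $\partial Y$ is the one met by all three rays, the length $\lambda^*(\cdot,d)$ restricts to an affine function of $x$ there. Write $F = Y \cap H_F$ with $H_F = \{z \in \mathbb{R}^n : \langle z, n_F\rangle = c_F\}$, where $n_F$ is the outward unit normal of $F$ and $Y \subseteq \{z : \langle z, n_F\rangle \leq c_F\}$; since the ray $\mathbb{R}^+ d$ meets $F$, we have $\langle d, n_F\rangle > 0$. For every $x \in X$ with $y^*(x,d) \in F$, substituting $y^*(x,d) = \lambda^*(x,d)\, d - x$ (Proposition~\ref{prop: r_X,Y well-defined}) into $\langle y^*(x,d), n_F\rangle = c_F$ gives
\[
    \lambda^*(x,d) = \frac{c_F + \langle x, n_F\rangle}{\langle d, n_F\rangle} =: a_F(x),
\]
an affine function of $x$ whose monotonicity along the segment $X$ is governed by the sign of $\langle x_2 - x_1, n_F\rangle$, regardless of $d$. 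Thus on $X_F(d) := \{x \in X : y^*(x,d) \in F\}$ the map $\lambda^*(\cdot,d)$ agrees with $a_F$.

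Next I would localize the two optimizers inside $X_F(d)$. Since $y_M^*(d)$ and $y_N^*(d)$ lie in the relative interior of the common face $F$ and $y^*(\cdot,d)$ is continuous (Proposition~\ref{prop: r_X,Y well-defined}), the set $X_F(d)$ contains a nondegenerate relative neighborhood in $X$ of each of $x_M^*(d)$ and $x_N^*(d)$. Suppose first $\langle x_2 - x_1, n_F\rangle \neq 0$; say $\langle x_2 - x_1, n_F\rangle > 0$ (the opposite sign is handled identically, with $x_1$ and $x_2$ exchanged), so that $a_F$ strictly increases as one moves along $X$ from $x_1$ to $x_2$. If $x_N^*(d) \neq x_2$, then moving slightly from $x_N^*(d)$ toward $x_2$ stays in $X_F(d)$ and strictly increases $a_F = \lambda^*(\cdot,d)$, contradicting the maximality of $x_N^*(d)$; hence $x_N^*(d) = x_2$, and symmetrically $x_M^*(d) = x_1$. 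These are distinct vertices of $X$, and since the argument never used the particular $d$, they are the same for every admissible $d$.

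It remains to treat the degenerate orientation $\langle x_2 - x_1, n_F\rangle = 0$, where $a_F$ is constant on $X$, so $\lambda^*(\cdot,d)$ is constant on the nondegenerate set $X_F(d)$. Here I would use that $\lambda^*(\cdot,d)$ is concave on $X$: it equals $g_d(-x)$, where $g_d(z) := \max\{t \geq 0 : z + td \in Y\}$ is concave on $Y$ by convexity of $Y$. A concave function on a segment that is bounded below by a constant $c$ and equals $c$ on a nondegenerate subinterval must be identically $c$; since $\min_{x \in X} \lambda^*(x,d) = \lambda^*(x_M^*(d), d) = a_F$, this forces $\lambda^*(\cdot,d) \equiv a_F$ on all of $X$, hence $y^*(x,d) \in H_F \cap Y = F$ for every $x \in X$, i.e.\ $X_F(d) = X$. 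Then every point of $X$ is simultaneously a minimizer and a maximizer, so under the standing convention (choose $x_M^*(d) \in \partial X$ and $x_N^*$ continuous) we may take $x_M^*(d) = x_1 \neq x_2 = x_N^*(d)$, again constant in $d$. In both cases $x_N^*(d)$ and $x_M^*(d)$ are distinct, constant, and lie in $\partial X$, which is the claim.

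The step I expect to be the real obstacle is not this algebra but the topological bookkeeping at the boundary of the active face: one must read ``$d$, $y_N^*(d)$ and $y_M^*(d)$ all intersect the same face'' as ``they all meet the relative interior of one common face $F$'' --- precisely the complement of the vertex crossings handled in the later lemmas --- because only then is $X_F(d)$ a genuine nondegenerate relative neighborhood of $x_M^*(d)$ and $x_N^*(d)$, which is what drives both the strictly monotone and the flat cases.
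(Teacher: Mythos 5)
Your proof is correct, and it reaches the conclusion by a genuinely different route from the paper's. The paper obtains the same linear dependence of $\lambda^*(x,d)$ on $x$ geometrically, via Thales's theorem in the triangles cut out by $X$, the three rays and the active face (giving $\delta_N = \delta_M\,\|x_N^*\|/\|x_M^*\|$), but it then has to import $x_M^*(d)\in\partial X$ from Theorem~\ref{thm:minimum on the vertices}, deduce $x_N^*(d)\in\partial X$ from that proportion, and obtain constancy on the face by combining the continuity of $x_N^*$ (Lemma~\ref{lemma: continuity of x_N}) with the discreteness of $\{x_1,x_2\}$. Your affine formula $\lambda^*(x,d)=\bigl(c_F+\langle x,n_F\rangle\bigr)/\langle d,n_F\rangle$ delivers all three conclusions in one stroke: strict monotonicity of $a_F$ along the segment pins $x_N^*$ and $x_M^*$ to opposite endpoints, and since the governing sign $\langle x_2-x_1,n_F\rangle$ depends only on the face and not on $d$, constancy in $d$ is automatic --- so your argument is independent of both Theorem~\ref{thm:minimum on the vertices} and Lemma~\ref{lemma: continuity of x_N}, and it additionally identifies which endpoint is which. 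Your concavity argument for the parallel case $\langle x_2-x_1,n_F\rangle=0$ is also more substantiated than the paper's handling of the corresponding case $\alpha+\beta\in\{\pi,2\pi\}$, which simply declares that one may still choose $x_N^*(d)\neq x_M^*(d)$ in $\partial X$. Two small points you should make explicit: the hypothesis must indeed be read as ``all three rays meet the relative interior of one common face'' (your own closing caveat; this is the intended reading, since the vertex crossings are treated separately in the later lemmas), and $\langle d,n_F\rangle>0$ is not quite automatic from ``the ray meets $F$'' alone, but it does follow from the fact that $y^*(x,d)$ is a maximal exit point of $Y$ lying in the relative interior of $F$, since otherwise $\lambda$ could be increased slightly while remaining in $Y$.
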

\begin{proof}
    We introduce the angles $\beta_M := \widehat{x_2, y_M^*}$ and $\beta_N := \widehat{x_2, y_N^*}$.
    Let $\alpha_0$ be the value of $\alpha$ when $\beta = 0$, i.e., when $d$ is positively collinear with $x_2$.
    
    We say that $y_N^*$ is \emph{leading} and $y_M^*$ is \emph{trailing} when $\beta_M < \beta_N$, and conversely when $\beta_N < \beta_M$, we say that $y_M$ is \emph{leading} and $y_N$ is \emph{trailing}.
    
    For each $d \in \mathbb{S} \cap \mathcal{P}$ we define $D(d) := \underset{y\, \in\, Y}{\max} \big\{ \|y\| : y \in \mathbb{R}^+ d \big\}$, whose existence is justified by the compactness of $Y$.

    We say that $y_N^*$ or $y_M^*$ is \emph{outside} when $\|y_N^* + x_N^*\| > D$ or $\|y_M^* + x_M^*\| > D$ respectively. Otherwise, $y_N^*$ or $y_M^*$ is \emph{inside}.
    Directly related to the previous definition, we introduce 
    \begin{equation}\label{eq:delta}
        \delta_M(d) := D(d) - \|x_M^*(d) + y_M^*(d)\|\ \ \text{and} \ \ \delta_N(d) := \|x_N^*(d) + y_N^*(d)\| - D(d).
    \end{equation}
    
    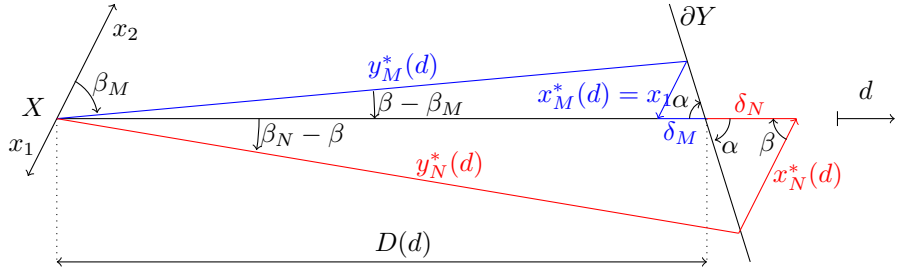
\begin{figure}[htbp!]
        \centering
        \begin{tikzpicture}[scale = 0.76]
            
            \draw[|->] (11.5,0) -- (12.5,0);
            \node at (12, 0.5) {$d$};
            
            \draw (-2, 0) -- (8.4, 0);
            \draw[dotted] (-2, 0) -- (-2, -2.5);
            \draw[dotted] (9.25, 0) -- (9.25, -2.5);
            \draw[<->] (-2, -2.5) -- (9.25, -2.5);
            \node at (4, -2.2) {$D(d)$};
            \draw[red] (9.25, 0) -- (10.8, 0);
            \node at (10, 0.25) {\textcolor{red}{$\delta_N$}};
            \draw[blue] (8.4, 0) -- (9.25, 0);
            \node at (8.8, -0.25) {\textcolor{blue}{$\delta_M$}};
            
             \draw (8.6, 2) -- (10, -2.5);
            \node at (9.1, 1.8) {$\partial Y$};
            
            \node at (-2.4, 0.2) {$X$};
            \draw[->, black] (-2, 0) -- (-1, 2);
            \node at (-0.8, 1.5) {\textcolor{black}{$x_2$}};
            \draw[<-, black] (-2.5, -1) -- (-2, 0);
            \node at (-2.6, -0.5) {\textcolor{black}{$x_1$}};
            
            \draw[blue] (-2,0) -- (8.9, 1);
            \node at (4, 0.9) {\textcolor{blue}{$y_M^*(d)$}};
            \draw[->, blue] (8.9, 1) -- (8.4, 0);
            \node at (7.5, 0.4) {\textcolor{blue}{$x_M^*(d) = x_1$}};
            
            \draw[red] (-2,0) -- (9.8,-2);
            \node at (4.8, -0.8) {\textcolor{red}{$y_N^*(d)$}};
            \draw[->, red] (9.8, -2) -- (10.8, 0);
            \node at (11, -1) {\textcolor{red}{$x_N^*(d)$}};
            
            \draw[->] (8.95, 0) arc (180:110:0.3);
            \node at (8.8, 0.3) {$\alpha$};
            
            \draw[->] (9.65, 0) arc (0:-70:0.4);
            \node at (9.65, -0.5) {$\alpha$};
            \draw[<-] (10.4, 0) arc (180:245:0.4);
            \node at (10.3, -0.45) {$\beta$};
            
             \draw[<-] (-1.3, 0.1) arc (10:60:0.8);
            \node at (-1.05, 0.6) {$\beta_M$};
            \draw[<-] (3.5, 0) arc (0:5:5.5);
            \node at (4.3, 0.25) {$\beta - \beta_M$};
            \draw[->] (1.5, 0) arc (0:-9:3.5);
            \node at (2.25, -0.3) {$\beta_N - \beta$};
            
        \end{tikzpicture}
        \caption{Illustration of $y_N^*(d)$ leading and outside, while $y_M^*(d)$ is trailing and inside the same face of $\partial Y$.}
        \label{fig: x_N^*(d) = -x_M^*(d)}
    \end{figure}
    
    We know from Theorem~\ref{thm:minimum on the vertices} that $x_M^*(d) \in \partial X$ for all $d \in \mathbb{S}$. In the case illustrated on Figure~\ref{fig: x_N^*(d) = -x_M^*(d)}, $x_M^*(d) = x_1$ because it maximizes $\delta_M$. 
    
    If $\alpha + \beta \in \{\pi, 2\pi\}$, then $X$ is parallel with a face of $\partial Y$ making $x_N^*$ and $x_M^*$ not uniquely defined. Regardless, we can still take $x_N^*(d) \neq x_M^*(d)$, with $x_N^*(d) \in \partial X$ and $x_M^*(d) \in \partial X$.
    Otherwise, $x_N^*$ and $x_M^*$ are uniquely defined. Since $x_N^*(d) \in X$, $x_M^*(d) \in X$ for all $d \in \mathbb{S}$ and $\dim X = 1$, vectors $x_N^*(d)$ and $x_M^*(d)$ are always collinear. We then use Thales's theorem and obtain $\delta_N(d) = \delta_M(d) \frac{\|x_N^*(d)\|}{\|x_M^*(d)\|}$. Since $x_N^*(d)$ is chosen to maximize $\delta_N$ and is independent from $\delta_M$, it must have the greatest norm, so $x_N^*(d) \in \partial X$. 
    In the case where $\alpha + \beta \notin \{\pi, 2\pi\}$, $\|x+y\|$ depends on the value of $x$.
    Because $x_N^*(d)$ is chosen to maximize $\|x+y\|$ while $x_M^*(d)$ is minimizing it, we have $x_N^*(d) \neq x_M^*(d)$.
    
    Since $x_N^*$ is continuous according to Lemma~\ref{lemma: continuity of x_N} and $x_N^*(d) \in \big\{x_1, x_2\big\}$, then $x_N^*(d)$ is constant on the faces of $\partial Y$. Because $x_M^*(d) \in \partial X$ too, it must also be constant.  
\end{proof}

\begin{lemma}\label{lemma:r(d) cst on faces}
    When $d$, $y_N^*(d)$ and $y_M^*(d)$ all intersect the same face of $\partial Y$, the ratio $r_{X,Y}(d)$ is constant.
\end{lemma}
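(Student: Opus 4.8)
The plan is to reduce $r_{X,Y}(d)$ to a ratio of two affine functions evaluated at the (by then fixed) vertices $x_N^*(d)$ and $x_M^*(d)$, whose coefficients come from the supporting hyperplane of the common face. First I would record that, by \eqref{eq:y^*(x,d)}, $x + y^*(x,d) = \lambda^*(x,d)\,d$, so $\|x + y^*(x,d)\| = \lambda^*(x,d)$ because $\|d\| = 1$ and $\lambda^*(x,d) \geq 0$; hence
\begin{equation*}
    r_{X,Y}(d) \;=\; \frac{\|x_N^*(d) + y_N^*(d)\|}{\|x_M^*(d) + y_M^*(d)\|} \;=\; \frac{\lambda^*\big(x_N^*(d),d\big)}{\lambda^*\big(x_M^*(d),d\big)} .
\end{equation*}
Write the common face $F$ as $F = Y \cap H_F$ with supporting hyperplane $H_F = \{z \in \mathbb{R}^n : \langle z, n_F\rangle = c_F\}$ and $Y \subseteq \{z : \langle z, n_F\rangle \leq c_F\}$, so that the pair $(n_F, c_F)$ is determined by $F$ alone.

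Next I would pair the identity $x_N^*(d) + y_N^*(d) = \lambda^*\big(x_N^*(d),d\big)\,d$ with $n_F$: since $y_N^*(d) \in F \subseteq H_F$, this gives $\lambda^*\big(x_N^*(d),d\big)\,\langle d, n_F\rangle = c_F + \langle x_N^*(d), n_F\rangle$, and the analogous identity holds with $M$ in place of $N$. Both right-hand sides are positive: $-x_N^*(d) \in -X \subset Y^\circ$ forces $\langle -x_N^*(d), n_F\rangle < c_F$ (as $H_F$ meets $Y$ only along $F \subseteq \partial Y$), whence $c_F + \langle x_N^*(d), n_F\rangle = c_F - \langle -x_N^*(d), n_F\rangle > 0$, and likewise for $M$; combined with $\lambda^* > 0$ from Proposition~\ref{prop: r_X,Y well-defined}(iv), this also shows $\langle d, n_F\rangle > 0$. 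Dividing the two identities cancels $\langle d, n_F\rangle$ and leaves
\begin{equation*}
    r_{X,Y}(d) \;=\; \frac{c_F + \langle x_N^*(d), n_F\rangle}{c_F + \langle x_M^*(d), n_F\rangle} .
\end{equation*}
Then I would invoke Lemma~\ref{lemma: x_N^*(d) and x_M^*(d) cst on faces}: throughout the range of directions for which $d$, $y_N^*(d)$ and $y_M^*(d)$ all meet $F$, the vertices $x_N^*(d), x_M^*(d)$ are constant (each equal to $x_1$ or $x_2$), and $(n_F, c_F)$ is constant, so the right-hand side does not depend on $d$ — which is exactly the claim.

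The only point requiring care is the degenerate configuration $\alpha + \beta \in \{\pi, 2\pi\}$, in which $X$ is parallel to $F$ and $x_N^*(d), x_M^*(d)$ are not uniquely defined; but there $\langle x, n_F\rangle$ is constant over $x \in X$, so the numerator and denominator above coincide and $r_{X,Y}(d) = 1$ irrespective of how the nonunique choices in Lemma~\ref{lemma: x_N^*(d) and x_M^*(d) cst on faces} are resolved. I expect no further obstacle. The same computation can equivalently be expressed in the notation of \eqref{eq:delta} — $D(d) = c_F/\langle d, n_F\rangle$, $\delta_N(d) = \langle x_N^*(d), n_F\rangle/\langle d, n_F\rangle$, $\delta_M(d) = -\langle x_M^*(d), n_F\rangle/\langle d, n_F\rangle$, so that $r_{X,Y}(d) = (D + \delta_N)/(D - \delta_M)$ — or argued purely geometrically by applying the intercept (Thales) theorem to the two parallel rays issuing from $-x_N^*(d)$ and $-x_M^*(d)$ in direction $d$ together with the point where the line $-X$ meets $H_F$; all three routes deliver the same constant.
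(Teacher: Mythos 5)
Your proof is correct, and it takes a genuinely different route from the paper's. The paper works inside the two-dimensional plane $\mathcal{P}$ with the angle variables $\alpha$ and $\beta$: it applies the law of sines to the triangles cut off by $\partial Y$, $x_N^*$, $\delta_N$ and by $\partial Y$, $x_M^*$, $\delta_M$ to get $\delta_N/D = \|x_N^*\|\sin(\alpha+\beta)/(D\sin\alpha)$ and the analogous formula for $\delta_M/D$, then shows via the angle sum of a triangle that $\alpha+\beta$ is invariant on a face and via the sine law again that $D\sin\alpha$ is invariant, so that both ratios in $r_{X,Y} = (1+\delta_N/D)/(1-\delta_M/D)$ are constant. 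You instead pair the identity $x+y^*(x,d)=\lambda^*(x,d)\,d$ with the normal $n_F$ of the supporting hyperplane of the common face, which collapses everything to $r_{X,Y}(d) = \big(c_F + \langle x_N^*(d), n_F\rangle\big)\big/\big(c_F + \langle x_M^*(d), n_F\rangle\big)$ after cancelling $\langle d, n_F\rangle$; constancy then follows from Lemma~\ref{lemma: x_N^*(d) and x_M^*(d) cst on faces} exactly as in the paper. Both arguments lean on that lemma in the same way and encode the same similar-triangles fact, but yours has some advantages: it is coordinate-free and sign-robust (the paper's sine-law identities are derived from one figure and asserted to persist across configurations, which requires checking orientation conventions), it produces an explicit closed form for the constant value of $r_{X,Y}$ on each face, and it disposes of the degenerate case $\alpha+\beta \in \{\pi, 2\pi\}$ cleanly by observing $\langle \cdot, n_F\rangle$ is constant on $X$ so the ratio is $1$ regardless of the nonunique choices. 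What the paper's version buys in exchange is that the quantities it manufactures here ($\sin(\alpha+\beta)$, $D\sin\alpha$, the invariance of $\alpha+\beta$ on faces recorded in \eqref{eq:alpha + beta = cst on a face}) are reused verbatim in the subsequent vertex-crossing lemmas, so its bookkeeping is an investment rather than pure overhead. Your positivity checks ($c_F + \langle x, n_F\rangle > 0$ from $-X \subset Y^\circ$, hence $\langle d, n_F\rangle > 0$ from $\lambda^* > 0$) are exactly what is needed to justify the division, so I see no gap.
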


\begin{proof}
    Based on Figure \ref{fig: x_N^*(d) = -x_M^*(d)} we apply the sine law in the triangle bounded by $\partial Y$, $\delta_M$ and $x_M^*$
    \begin{equation*}
        \frac{\|x_M^*(d)\|}{\sin \alpha} = \frac{\delta_M(d)}{\sin(\pi - \alpha -\beta)} = \frac{\delta_M(d)}{\sin(\alpha + \beta)}, \hspace{2.4mm} \text{so} \hspace{2.4mm} \frac{\delta_M(d)}{D(d)} = \frac{\|x_M^*(d)\| \sin(\alpha + \beta)}{D(d)\sin \alpha}.
    \end{equation*}
    Similarly for the triangle bounded by $\partial Y$, $\delta_N$ and $x_N^*$, the law of sines yields 
    \begin{equation*}
        \frac{\|x_N^*(d)\|}{\sin \alpha} = \frac{\delta_N(d)}{\sin(\pi - \alpha - \beta)} = \frac{\delta_N(d)}{\sin(\alpha + \beta)}, \hspace{2.5mm} \text{so} \hspace{2.5mm} \frac{\delta_N(d)}{D(d)} = \frac{\|x_N^*(d)\| \sin(\alpha + \beta)}{D(d)\sin \alpha}.
    \end{equation*}
    Even if the two equations above were derived for the specific situation of Figure \ref{fig: x_N^*(d) = -x_M^*(d)}, they hold as long as $y_N^*$, $D$ and $y_M^*$ intersect the same face of $\partial Y$.
    Based on \eqref{eq:delta} we have
    \begin{equation}\label{eq: r_X,Y}
        r_{X,Y}(d) = \frac{D(d) + \delta_N(d)}{D(d) - \delta_M(d)} = \frac{1 + \frac{\delta_N}{D} }{1 - \frac{\delta_M}{D}}.
    \end{equation}

    We will now prove that the ratios $\delta_N / D$ and $\delta_M / D$ do not change on a face of $\partial Y$. Let $d_1 \in \mathcal{P} \cap \mathbb{S}$ and $d_2 \in \mathcal{P} \cap \mathbb{S}$ such that $D(d_1)$, $D(d_2)$, $y_M^*(d_1)$, $y_M^*(d_2)$, $y_N^*(d_1)$ and $y_N^*(d_2)$ all intersect the same face of $\partial Y$, as illustrated on Figure~\ref{fig:ratio constant on faces}. 
    
    \begin{figure}[htbp!]
        \centering
        \begin{tikzpicture}[scale = 0.7]
            
            \draw (8.375, 3) -- (10, -2);
            \node at (9.7, 0.4) {$\partial Y$};
            
            \draw[<-] (-2.25, -0.5) -- (-2, 0);
            \draw[->] (-2, 0) -- (-1.5, 1);
            
            \draw[|->] (10, 2.2) -- (11, 2.42);
            \node at (10.5, 2.7) {$d_1$};
            \draw (-2, 0) -- (8.7, 2);
            \node at (4, 1.5) {$D(d_1)$};
            
            \draw[<-] (-1.2, 0.16) arc (0:65:0.7);
            \node at (-1.1, 0.9) {$\beta_1$};
            \draw[->] (8.3, 1.95) arc (190:120:0.5);
            \node at (8.05, 2.4) {$\alpha_1$};
           
            \draw[|->] (11,-1.7) -- (12,-1.8);
            \node at (11.5, -1.4) {$d_2$};
            \draw (-2, 0) -- (9.8, -1.5);
            \node at (4, -0.4) {$D(d_2)$};
            
            \draw[<-] (-1.5, -0.05) arc (-2:65:0.5);
            \node at (-1.5, -0.4) {$\beta_2$};
            \draw[->] (9.3, -1.4) arc (170:106:0.5);
            \node at (9, -1) {$\alpha_2$};
            
            \draw[<-] (0, -0.2) arc (-15:20:1);
            \node at (1,0) {$\beta_2 - \beta_1$};
            
        \end{tikzpicture}
        \caption{Ratio $r_{X,Y}(d)$ is constant on a face of $\partial Y$.}
        \label{fig:ratio constant on faces}
    \end{figure}
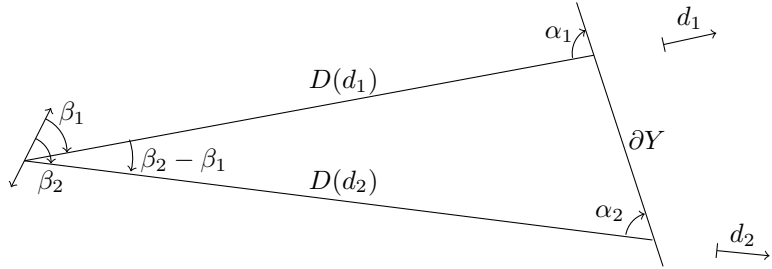

    The sum of the angles of the triangle in Figure~\ref{fig:ratio constant on faces} is
    \begin{equation}\label{eq:alpha + beta = cst on a face}
        (\beta_2 - \beta_1) + \alpha_2 + (\pi - \alpha_1) = \pi \qquad \text{so} \qquad \beta_2 + \alpha_2 = \beta_1 + \alpha_1.
    \end{equation}
    Therefore, $\alpha + \beta$ is constant on faces of $\partial Y$. 
    We use the sine law in the triangle in Figure~\ref{fig:ratio constant on faces} and obtain
    \begin{equation*}
        \frac{D(d_1)}{\sin \alpha_2} = \frac{D(d_2)}{\sin (\pi - \alpha_1)} = \frac{D(d_2)}{\sin \alpha_1}, \quad so,\quad D(d_1)\sin \alpha_1 = D(d_2) \sin \alpha_2.
    \end{equation*}
    According to Lemma~\ref{lemma: x_N^*(d) and x_M^*(d) cst on faces} we also know that $x_N^*(d_1) = x_N^*(d_2)$, thus
    \begin{equation*}
        \frac{\delta_N(d_1)}{D(d_1)} = \frac{\|x_N^*(d_1)\| \sin(\alpha_1 + \beta_1)}{D(d_1) \sin \alpha_1} = \frac{\|x_N^*(d_2)\| \sin(\alpha_2 + \beta_2)}{D(d_2) \sin \alpha_2} = \frac{\delta_N(d_2)}{D(d_2)}.
    \end{equation*}
    The same holds for $\delta_M / D$. Hence, \eqref{eq: r_X,Y} yields $r_{X,Y}(d_1) = r_{X,Y}(d_2)$.  
\end{proof}

\vspace{3mm}

\begin{lemma}\label{lemma: v_pi and v_2pi}
    There are two vertices of $Y \cap \mathcal{P}$, namely $v_\pi$ and $v_{2\pi}$ whose crossing by $d$ makes the angle $\alpha+\beta$ become greater than $\pi$ and $2\pi$ respectively.
\end{lemma}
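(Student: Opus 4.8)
The plan is to follow $\alpha+\beta$ as a piecewise-constant function of $\beta$ through one full revolution of $d$ and to locate the two level crossings. First I would fix the geometry. Assuming $0\in X$ — the case $0\notin X$ being handled by Lemma~\ref{lemma: 0 notin X} — the plane $\mathcal{P}$ is a linear subspace through the origin, and since $-X\subset Y^\circ$ we have $0\in Y^\circ$; hence $Q:=Y\cap\mathcal{P}$ is a two-dimensional convex polygon with $0$ in its relative interior. Consequently, for each $d\in\mathbb{S}\cap\mathcal{P}$ the ray $\mathbb{R}^+d$ meets $\partial Q=\partial Y\cap\mathcal{P}$ in exactly one point, lying on an edge of $Q$ for all but finitely many $\beta\in[0,2\pi)$ and on a vertex of $Q$ for the remaining finitely many values. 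As $\beta$ increases over $[0,2\pi)$, the ray rotates clockwise (per our angle convention), so its intersection point traverses $\partial Q$ once in the clockwise sense: the edges are met in their cyclic order and each vertex of $Q$ is crossed exactly once.

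Next I would analyse the jumps of $\alpha+\beta$. By~\eqref{eq:alpha + beta = cst on a face}, $\alpha+\beta$ is constant while $d$ hits a fixed edge $E$; geometrically this constant equals the signed angle $\widehat{x_2, \vec{E}}$, where $\vec{E}$ is the direction of $E$ oriented consistently with the clockwise traversal of $\partial Q$. At a vertex crossing the ray pivots from one incident edge $E^-$ to the next $E^+$ while $\beta$ is held fixed, so $\alpha+\beta$ jumps by $\widehat{\vec{E}^-, \vec{E}^+}$, which is precisely the exterior angle of $Q$ at that vertex, a number in $(0,\pi)$. Thus a continuous lift of $\alpha+\beta$ — continuous on each edge-interval and jumping upward at each vertex crossing — is non-decreasing in $\beta$ and strictly increasing across vertex crossings; and since over $[0,2\pi)$ the term $\beta$ gains $2\pi$ while $\alpha$ returns to its starting value, this lift increases by exactly $2\pi$.

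Finally I would read off $v_\pi$ and $v_{2\pi}$. Let $\alpha_0$ be the value of $\alpha+\beta$ at $\beta=0$; after relabelling so that $x_2$ is not parallel to an edge of $Q$ we may take $0<\alpha_0<\pi$. The lift of $\alpha+\beta$ then runs monotonically from $\alpha_0$ to $\alpha_0+2\pi$, so it surpasses the level $\pi$; since it is constant on edge-intervals, the first $\beta$ at which it exceeds $\pi$ is a vertex-crossing value, and the vertex crossed there is $v_\pi$. The same reasoning applied to the level $2\pi$, which lies strictly below $\alpha_0+2\pi$ because $\alpha_0>0$, produces $v_{2\pi}$. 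As every jump is smaller than $\pi$, no single crossing can carry $\alpha+\beta$ from below $\pi$ to above $2\pi$, so $v_\pi\neq v_{2\pi}$; both are vertices of $Y\cap\mathcal{P}$ by construction.

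The step I expect to be the main obstacle is the bookkeeping around the clockwise angle convention and the passage to a continuous lift of $\alpha+\beta$ — in particular, verifying rigorously that each vertex crossing contributes exactly the positive exterior angle of $Q$ and that $\alpha_0$ may be assumed to lie in $(0,\pi)$ — together with the routine but fiddly disposal of degenerate configurations (a ray through a vertex exactly at $\beta=0$, $x_2$ parallel to an edge of $Q$, or $Q$ having parallel edges), which need small perturbation or relabelling arguments rather than anything conceptually new.
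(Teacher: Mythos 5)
Your proposal is correct and follows essentially the same route as the paper: $\alpha+\beta$ is constant on each face by \eqref{eq:alpha + beta = cst on a face}, jumps upward by the exterior angle at each vertex crossing, and since the exterior angles of the polygon $Y\cap\mathcal{P}$ sum to $2\pi$ the resulting non-decreasing step function rises from $\alpha_0$ to $\alpha_0+2\pi$ and must therefore cross the levels $\pi$ and $2\pi$ at two vertices. Your treatment is somewhat more careful than the paper's (explicit monotone lift, the bound $\varepsilon_i\in(0,\pi)$ giving $v_\pi\neq v_{2\pi}$, and the normalization $\alpha_0\in(0,\pi)$), while the paper handles the one degenerate case you flag by declaring $v_\pi$ or $v_{2\pi}$ to be the vertex preceding a face on which $\alpha+\beta$ equals $\pi$ or $2\pi$ exactly.
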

\begin{proof}
    We have taken the convention that the angles are positively oriented in the clockwise orientation.
    According to \eqref{eq:alpha + beta = cst on a face}, the angle $\alpha + \beta$ is constant on a face of $\partial Y$. When $d$ crosses a vertex of external angle $\varepsilon$ as represented on Figure~\ref{fig: x_v}, the value of $\alpha$ has a discontinuity of $+\varepsilon$. Let $q$ be the number of vertices of $\partial Y$ and $\varepsilon_i$ the external angle of the $i^{th}$ vertex $v_i$. Since $Y \cap \mathcal{P}$ is a polygon, $\sum_{i = 1}^q \varepsilon_i = 2\pi$.
    We can then represent the evolution of $\alpha + \beta$ as a function of $\beta$ with Figure~\ref{fig:graph of alpha + beta}.
    Instead of labeling the horizontal axis with the values taken by $\beta$ as the corresponding vector $d(\beta)$ crosses the vertex $v_i$, we directly use $v_i$ with a slight abuse of notation.
    
    \begin{figure}[htbp!]
        \centering
        \begin{tikzpicture}[scale = 0.7]
            \draw[<->] (0, 5) -- (0, 0) -- (10.5, 0);
            \node at (11, 0) {$\beta$};
            \node at (0, 5.3) {$\alpha + \beta$};
            \node at (0, -0.5) {$0$};
            \draw (10, -0.1) -- (10, 0.1);
            \node at (10, -0.5) {$2\pi$};
            
            \draw (-0.1, 0.4) -- (2, 0.4) -- (2, 0.9) -- (3, 0.9) -- (3, 1.5) -- (5, 1.5) -- (5, 2.5) -- (6, 2.5) -- (6, 2.8) -- (7, 2.8) -- (7, 3.5) -- (8.5, 3.5) -- (8.5, 4.2) -- (10, 4.2);
            
            \draw (2, -0.1) -- (2, 0.1);
            \node at (2, -0.5) {$v_1$};
            \draw (3, -0.1) -- (3, 0.1);
            \node at (3, -0.5) {$v_2$};
            \draw (5, -0.1) -- (5, 0.1);
            \node at (5, -0.5) {$v_3$};
            \draw (6, -0.1) -- (6, 0.1);
            \draw (7, -0.1) -- (7, 0.1);
            \draw (8.5, -0.1) -- (8.5, 0.1);
            \node at (8.5, -0.5) {$v_q$};
            
            \node at (-0.5, 0.4) {$\alpha_0$};
            \draw (-0.1, 0.9) -- (0.1, 0.9);
            \node at (-1, 0.9) {$\alpha_0 + \varepsilon_1$};
            \draw (-0.1, 1.5) -- (0.1, 1.5);
            \node at (-1.6, 1.5) {$\alpha_0 + \varepsilon_1 + \varepsilon_2$};
            \draw (-0.1, 2.5) -- (0.1, 2.5);
            \node at (-2.2, 2.5) {$\alpha_0 + \varepsilon_1 + \varepsilon_2+\varepsilon_3$};
            \draw (-0.1, 2.8) -- (0.1, 2.8);
            \draw (-0.1, 3.5) -- (0.1, 3.5);
            \draw (-0.1, 4.2) -- (0.1, 4.2);
            \node at (-1, 4.2) {$\alpha_0 + 2\pi$};
        
        \end{tikzpicture}
        \caption{Evolution of $\alpha + \beta$ with $\beta$ increasing clockwise in $[0, 2\pi)$.}
        \label{fig:graph of alpha + beta}
    \end{figure}
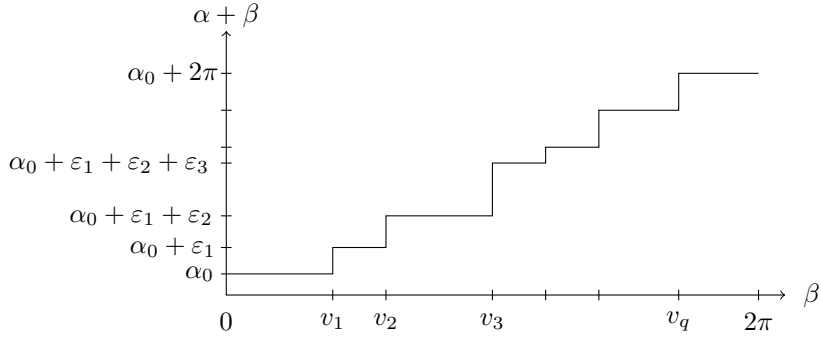
    
    Recall that $\alpha_0$ is the value of $\alpha$ when $\beta = 0$. After a whole revolution $\alpha + \beta = \alpha_0 + 2\pi$. So there are two vertices $v_{\pi}$ and $v_{2\pi}$ where $\alpha + \beta$ first crosses $\pi$ and then $2\pi$. 
    In the eventuality that $\alpha + \beta = \pi$ or $2\pi$ on a face of $\partial Y$, we define $v_\pi$ or $v_{2\pi}$ as the vertex preceding the face.  
\end{proof}

\begin{lemma}\label{lemma: x_N^*(d) and x_M^*(d) cst before v_pi}
    During the crossing of vertices before $v_\pi$ as $\beta$ increases, $x_N^*(d) = x_2$ and $x_M^*(d) = x_1$. They are constant, different and both belong in $\partial X$.
\end{lemma}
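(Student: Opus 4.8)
The plan is to split the statement into (a) identifying which endpoint of $X$ plays the role of $x_N^*$ and which plays the role of $x_M^*$ at the start of the revolution, $\beta=0$, and (b) transporting this identification through every vertex crossing up to $v_\pi$. The key structural fact I would exploit is that, for fixed $d$, the reach $\lambda^*(x,d)=\max\{\lambda\ge 0:\lambda d-x\in Y\}$ is a \emph{concave} function of $x$ along the segment $X$ (it maximizes a coordinate over the convex set $\{(x,\lambda):\lambda d-x\in Y,\ \lambda\ge 0\}$), and that along any portion of $X$ on which the exit point $\lambda^*(x,d)\,d\in\partial(x+Y)$ stays on a single facet of $\partial Y$ with outward unit normal $\nu$, one has $\lambda^*\big(x_1+t(x_2-x_1),d\big)=\lambda^*(x_1,d)+t\,\langle x_2-x_1,\nu\rangle/\langle d,\nu\rangle$ with $\langle d,\nu\rangle>0$. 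Consequently the extrema of $\lambda^*(\cdot,d)$ over $X$ are attained at the two distinct endpoints of $X$, in agreement with Lemma~\ref{lemma: x_N^*(d) and x_M^*(d) cst on faces}, and the whole question reduces to the sign of $\langle x_2-x_1,\nu\rangle$: when it is positive, the maximizer $x_N^*(d)$ is $x_2$ and the minimizer $x_M^*(d)$ is $x_1$.

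For (a) I would use the standing convention behind Figure~\ref{fig:angles illustration} that $0\in X$ — the case $0\notin X$ being handled in Lemma~\ref{lemma: 0 notin X} — so that $x_1$ is a nonpositive multiple of $x_2$ and $x_2-x_1$ is a positive multiple of $x_2$, hence positively collinear with $d$ when $\beta=0$. Writing $x(t)+Y=(x_2+Y)+(t-1)(x_2-x_1)$ and using that $\{\lambda\in\mathbb{R}:\lambda d\in x_2+Y\}$ is an interval containing $0$, one gets $\lambda^*\big(x(t),d\big)=\lambda^*(x_2,d)-(1-t)\|x_2-x_1\|$, which is affine and strictly increasing in $t$, so $x_N^*=x_2$ and $x_M^*=x_1$ at $\beta=0$. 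For (b), the angle-addition rule gives $\widehat{x_2,\partial Y}=\widehat{x_2,d}+\widehat{d,\partial Y}=\alpha+\beta$, so each relevant exit facet makes the signed angle $\alpha+\beta$ with $X$; a short trigonometric computation (with the orientation of $\nu$ pinned down by $\langle d,\nu\rangle>0$) then yields $\langle x_2-x_1,\nu\rangle=\|x_2-x_1\|\sin(\alpha+\beta)$. By Lemma~\ref{lemma: v_pi and v_2pi}, $\alpha+\beta$ increases from $\alpha_0\ge 0$ and first reaches $\pi$ at $v_\pi$, so $\alpha+\beta\in(0,\pi)$ and $\sin(\alpha+\beta)>0$ throughout the crossings preceding $v_\pi$. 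Then $\lambda^*(\cdot,d)$ has a strictly positive one-sided slope at $x_2$; being concave, its slope is nonincreasing, hence positive on all of $[x_1,x_2]$, so $\lambda^*(\cdot,d)$ is strictly increasing there. This gives $x_N^*(d)=x_2\neq x_1=x_M^*(d)$, both in $\partial X$ and constant over each such crossing (and, with Lemma~\ref{lemma: x_N^*(d) and x_M^*(d) cst on faces}, over the intervening faces as well), which together with the continuity of $x_N^*$ from Lemma~\ref{lemma: continuity of x_N} knits the whole range $\beta\in[0,v_\pi)$ together.

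The main obstacle is precisely the vertex-crossing intervals, where $d$, $y_N^*$ and $y_M^*$ lie on different facets, so Lemma~\ref{lemma: x_N^*(d) and x_M^*(d) cst on faces} does not apply and, a priori, the concave function $\lambda^*(\cdot,d)$ could attain its maximum in $X^\circ$; the delicate point is to certify that the exit facets of $x_1+Y$ and of $x_2+Y$ along the $d$-ray still make a signed angle in $(0,\pi)$ with $x_2$ for every $d$ up to $v_\pi$ — equivalently, that translating $x_1+Y$ to $x_2+Y$ never pushes the $d$-exit inward before $v_\pi$ — which is the content that forces $\lambda^*(\cdot,d)$ to stay monotone on $[x_1,x_2]$. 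Secondary technical points are the reduction from the general position of $X$ to $0\in X$ (Lemma~\ref{lemma: 0 notin X}) and the degenerate configurations $\alpha+\beta\in\{0,\pi\}$ occurring on a whole face, where $x_N^*$ and $x_M^*$ are nonunique and must be selected by the continuity/boundary tie-breaking convention already fixed in the proof of Theorem~\ref{thm:varying x_M and x_N}.
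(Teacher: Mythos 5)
Your route is genuinely different from the paper's: you replace the Thales/sine-law case analysis by the observation that $\lambda^*(\cdot,d)$ restricted to the segment $X$ is a concave, piecewise-affine function (a finite minimum of the affine functions $x\mapsto (c_i+\langle x,\nu_i\rangle)/\langle d,\nu_i\rangle$ over the facets of $Y$ with $\langle d,\nu_i\rangle>0$), so that everything reduces to showing that every facet active along $X$ contributes a piece of positive slope, i.e.\ makes a signed angle in $(0,\pi)$ with $x_2$. That reduction is correct and attractive. But there is a concrete gap at exactly the decisive step, and you flag it yourself without closing it. During Part I of the crossing of a vertex $v$ between $F_1$ and $F_2$, the ray $d$ still meets $F_1$, so the quantity $\alpha+\beta$ controlled by Lemma~\ref{lemma: v_pi and v_2pi} is the angle from $x_2$ to $F_1$; the facet governing the one-sided slope of $\lambda^*(\cdot,d)$ at $x_2$ is $F_2$, whose angle with $x_2$ is $\alpha+\beta+\varepsilon$. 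Your sentence ``so $\alpha+\beta\in(0,\pi)$ and $\sin(\alpha+\beta)>0$ throughout the crossings preceding $v_\pi$'' therefore certifies the sign for the wrong facet. Moreover, invoking ``the one-sided slope at $x_2$'' presupposes that the active facet at $x_2$ is $F_2$ at all, whereas a priori the exit point could be pinned at the vertex $v$ itself, in which case $x_N^*(d)=x_v$ may lie in $X^\circ$ and the conclusion fails; this is precisely the first scenario the paper eliminates via Figures~\ref{fig: y_N^* in v} and~\ref{fig: zoom x inside}, where the triangle bounded by $F_1$, $F_2$ and $x_2$ forces $\alpha+\beta+\varepsilon>\pi$ and hence $v=v_\pi$.

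The gap is fillable inside your framework, but the filling is the real content of the lemma. One must show $\alpha+\beta+\varepsilon<\pi$ for every vertex crossed strictly before $v_\pi$: the angle from $x_2$ to $F_2$ at the current $\beta$ is strictly less than the post-jump value of the step function of Lemma~\ref{lemma: v_pi and v_2pi} evaluated when $d$ reaches $v$, and that post-jump value does not exceed $\pi$ precisely because $v$ precedes $v_\pi$ (with the paper's tie-breaking convention when it equals $\pi$). Once both $F_1$ and $F_2$ have positive slope, concavity of the piecewise-affine $g(t)=\lambda^*\big(x_1+t(x_2-x_1),d\big)$ gives strict monotonicity on $[0,1]$, which simultaneously rules out the pinned-at-the-vertex configuration and yields $x_N^*=x_2$, $x_M^*=x_1$. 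With that one paragraph added, your argument is complete and arguably cleaner than the paper's; as written, it asserts the key inequality for the wrong facet and defers the resolution to a ``delicate point'' it never certifies.
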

\begin{proof}
    We study the crossing of a vertex $v$ of angle $\varepsilon$ between the faces $F_1$ and $F_2$ of $\partial Y$.
    For each vertex $v$ we introduce $x_v$ the vector collinear with $X$, going from $v$ to the ray directed by $d$, as illustrated on Figure~\ref{fig: x_v} and we say that the crossing of $v$ is ongoing as long as $\|x_v\| < \max\{\|x_1\|, \|x_2\|\}$. We also define $\delta_v := \|v + x_v\| - D$.
    
    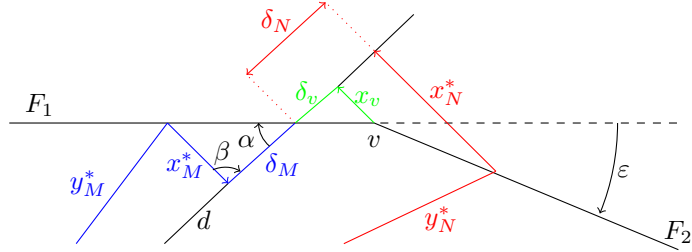
\begin{figure}[htbp!]
        \centering
        \begin{tikzpicture}[scale = 0.8]
            
            \draw (-6, 0) -- (0, 0) -- (5, -2.1);
            \draw[dashed] (0.1, 0) -- (5,0);
            \node at (-5.5, 0.3) {$F_1$};
            \node at (5, -1.8) {$F_2$};
            \node at (0, -0.3) {$v$};
            
            \draw[->] (4,0) arc (0:-22:4);
            \node at (4.1, -0.8) {$\varepsilon$};
            
            \draw (-3.45, -2) -- (-2.4, -1);
            \draw (-0.6, 0.6) -- (0.65, 1.8);
            \node at (-2.8, -1.6) {$d$};
            
            \draw[red] (-0.5, -2) -- (2, -0.8);
            \node at (1.1, -1.6) {\textcolor{red}{$y_N^*$}};
            \draw[->, red] (2, -0.8) -- (0, 1.2);
            \node at (1.2, 0.5) {\textcolor{red}{$x_N^*$}};
            \draw[red, dotted] (-1.3, 0) -- (-2.1, 0.8);
            \draw[red, dotted] (0, 1.2) -- (-0.8, 2);
            \draw[red, <->] (-2.1, 0.8) -- (-0.8, 2);
            \node at (-1.6, 1.7) {\textcolor{red}{$\delta_N$}};
            
            \draw[blue] (-4.9, -2) -- (-3.4, 0);
            \node at (-4.7, -1) {\textcolor{blue}{$y_M^*$}};
            \draw[->, blue] (-3.4, 0) -- (-2.4, -1);
            \node at (-3.1, -0.7) {\textcolor{blue}{$x_M^*$}};
            \draw[blue] (-2.4, -1) -- (-1.3, 0);
            \node at (-1.5, -0.7) {\textcolor{blue}{$\delta_M$}};
           
          \draw[->, green] (0, 0) -- (-0.6, 0.6);
          \node at (-0.1, 0.4) {\textcolor{green}{$x_v$}};
          \draw[green] (-1.3, 0) -- (-0.6, 0.6);
          \node at (-1.1, 0.5) {\textcolor{green}{$\delta_v$}};
           
          \draw[<-] (-2.2, -0.8) arc (60:114:0.5);
          \node at (-2.5, -0.5) {$\beta$};
            
            \draw[<-] (-1.9, 0) arc (180:230:0.5);
            \node at (-2.1, -0.3) {$\alpha$};
            
        \end{tikzpicture}
        \caption{Illustration of $x_v$ during the crossing of a vertex $v$, with $y_N^*$ leading.}
        \label{fig: x_v}
    \end{figure}

    Before starting the crossing of $v_\pi$ we have $\alpha + \beta \in (\alpha_0, \pi)$. This situation is depicted on Figure~\ref{fig: x_N^*(d) = -x_M^*(d)}, where $y_N^*$ is leading and outside, so $y_N^*$ reaches the vertex before $y_M^*$ and $d$. The length of $x_N^*(d)$ can vary to maximize $\delta_N$, so $y_N^*$ could still intersect $F_1$, even if the crossing is ongoing. We have seen in Lemma~\ref{lemma: x_N^*(d) and x_M^*(d) cst on faces} that if $y_N^*$ is still on $F_1$, then it must be the furthest possible to maximize $\delta_N$, in that case $y_N^* = v$. Otherwise, $y_N^*$ intersects $F_2$. We want to establish a criterion to distinguish these two possible scenarios.
    
    We first consider the scenario where $y_N^* = v$ and $x_N^*(d) = x_v$. We take $y \in F_2 \backslash \{v\}$ such that $x_2 + y \in \mathbb{R}^+d$ as represented on Figure~\ref{fig: y_N^* in v} and we define $\delta := \|x_2 + y\| - D$.
    
    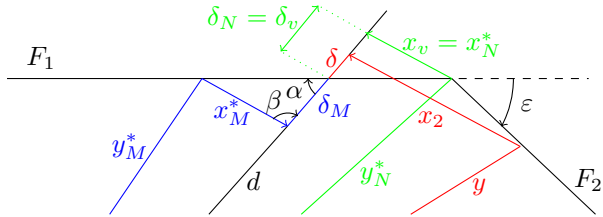
\begin{figure}[htbp!]
        \centering
        \begin{tikzpicture}[scale = 0.9]
            
            \draw (-5, 0) -- (1.5, 0) -- (3.6, -2);
            \draw[dashed] (1.6, 0) -- (3.5,0);
            \node at (-4.5, 0.3) {$F_1$};
            \node at (3.5, -1.5) {$F_2$};
            
            \draw[->] (2.4,0) arc (0:-28:1.5);
            \node at (2.6, -0.4) {$\varepsilon$};
            
            \draw (-2.05, -2) -- (-0.9, -0.7);
            \draw (0, 0.35) -- (0.55, 1);  
            \node at (-1.4, -1.5) {$d$};
            
            \draw[red] (0.9, -2) -- (2.5, -1);
            \node at (1.9, -1.6) {\textcolor{red}{$y$}};
            \draw[->, red] (2.5, -1) -- (0, 0.35);
            \node at (1.2, -0.6) {\textcolor{red}{$x_2$}};
            \draw[red] (-0.3, 0) -- (0, 0.35);
            \node at (-0.25, 0.3) {\textcolor{red}{$\delta$}};
            
            \draw[green] (-0.7, -2) -- (1.5, 0);
            \node at (0.4, -1.4) {\textcolor{green}{$y_N^*$}};
            \draw[green, ->] (1.5, 0) -- (0.25, 0.675);
            \node at (1.5, 0.5) {\textcolor{green}{$x_v = x_N^*$}};
            \draw[green, dotted] (-0.3, 0) -- (-1, 0.4);
            \draw[green, dotted] (0.25, 0.675) -- (-0.45, 1.075);
            \draw[green, <->] (-1, 0.4) -- (-0.45, 1.075);
            \node at (-1.4, 0.9) {\textcolor{green}{$\delta_N = \delta_v$}};
            
            \draw[blue] (-3.5, -2) -- (-2.15, 0);
            \node at (-3.2, -1) {\textcolor{blue}{$y_M^*$}};
            \draw[->, blue] (-2.15, 0) -- (-0.9, -0.7);
            \node at (-1.7, -0.5) {\textcolor{blue}{$x_M^*$}};
            \draw[blue] (-0.9, -0.7) -- (-0.3, 0);
            \node at (-0.2, -0.4) {\textcolor{blue}{$\delta_M$}};
           
            \draw[<-] (-0.75, -0.55) arc (60:135:0.3);
            \node at (-1.1, -0.35) {$\beta$};
            
            \draw[<-] (-0.6, 0) arc (180:230:0.3);
            \node at (-0.8, -0.2) {$\alpha$};
            
        \end{tikzpicture}
        \caption{Illustration of the crossing scenario where $y_N^* = v$.}
        \label{fig: y_N^* in v}
    \end{figure}
    
    Since $\delta_N$ must be maximized by the choice of $y_N^*$ and $y \neq y_N^*$, we have $\delta < \delta_N = \delta_v$. But $\|x_2\| > \|x_v\|$, so the line segment corresponding to $x_2$ crosses the interior of $Y$. Focusing on this part of Figure~\ref{fig: y_N^* in v} we obtain Figure~\ref{fig: zoom x inside}.
    
    \begin{figure}[htbp!]
        \centering
        \begin{tikzpicture}[scale = 0.8]
            
            \draw (-5, 0) -- (1.5, 0) -- (3.6, -2);
            \draw[dashed] (1.7, 0) -- (3.5,0);
            \node at (0.3, 0.25) {$F_1$};
            \node at (3, -1) {$F_2$};
            \node at (1.5, 0.2) {$v$};
            
            \draw[->] (2.4,0) arc (0:-28:1.5);
            \node at (2.6, -0.4) {$\varepsilon$};
            
            \draw (-5, -1) -- (-4, 0);
            \draw (-3, 1) -- (-2.75, 1.25);
            \node at (-4.5, -0.8) {$d$};
            
            \draw[red] (-4, 0) -- (-3, 1);
            \node at (-3.7, 0.6) {\textcolor{red}{$\delta$}};
            
            \draw[red, ->] (3.3, -1.7) -- (-3, 1);
            \node at (0, -0.6) {\textcolor{red}{$x_2$}};
            \draw[red] (2.7, -2) -- (3.3, -1.7);
            
            \draw[<-] (-3.2, 0.8) arc (230:320:0.4);
            \node at (-2.8, 0.4) {$\beta$};
            
            \draw[<-] (-3.6, 0) arc (0:47:0.4);
            \node at (-3.4, 0.2) {$\alpha$};

        \end{tikzpicture}
        \caption{Illustration of the line segment corresponding to $x_2$ crossing the interior of $Y$ in Figure~\ref{fig: y_N^* in v}.}
        \label{fig: zoom x inside}
    \end{figure}
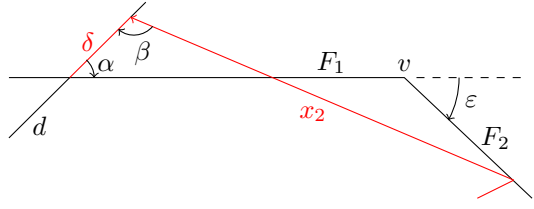
    
    Two of the angles of the triangle delimited by $F_1$, $F_2$ and $x_2$ are $\pi - \alpha - \beta$ and $\pi - \varepsilon$. Therefore, their sum is in $(0, \pi)$ and thus $\alpha + \beta + \varepsilon > \pi$. Since we assumed that $\alpha + \beta \in (\alpha_0, \pi)$, the vertex $v$ must in fact be $v_\pi$ for this scenario to happen.
    
    \vspace{2mm}
    
    Thus, the crossing of a vertex preceding $v_\pi$ follows the second scenario as depicted on Figure~\ref{fig: x_v} with $y_N^* \in F_2$. We study Figure~\ref{fig: zoom x_v x_N^*} which is a more detailed view of Figure~\ref{fig: x_v}, with $\delta_0$ depending solely on $d$ and $\varepsilon$. 
    
    \begin{figure}[htbp!]
        \centering
        \begin{tikzpicture}[scale = 0.7]
            
            \draw (-5, 0) -- (0, 0) -- (2.5, -0.5);
            \draw[dashed] (-2.6, 0.55) -- (0, 0) -- (2.5,0);
            \node at (-4.5, 0.3) {$F_1$};
            \node at (2.9, -0.6) {$F_2$};
            \node at (0, -0.3) {$v$};
            
            \draw[->] (2,0) arc (0:-11:2);
            \node at (2.3, -0.2) {$\varepsilon$};
            
            \draw (-4, -1) -- (-3, 0);
            \node at (-3.3, -0.5) {$d$};
            \draw[blue] (-3, 0) -- (-2.5, 0.5);
            \node at (-3.1, 0.4) {\textcolor{blue}{$\delta_0$}};
            \draw[green] (-2.5, 0.5) -- (-1.5, 1.5);
            \node at (-2.8, 1.2) {\textcolor{green}{$\delta_v - \delta_0$}};
            \draw[red] (-1.5, 1.5) -- (-1, 2);
            \node at (-2.1, 2) {\textcolor{red}{$\delta_N - \delta_v$}};
            \draw (-1, 2) -- (-0.75, 2.25);
            
            \draw[green, ->] (0,0) -- (-1.5, 1.5);
            \node at (-1.1, 0.7) {\textcolor{green}{$x_v$}};
            \draw[red, ->] (1.3, -0.27) -- (-1, 2);
            \node at (0.5, 1.1) {\textcolor{red}{$x_N^*$}};
            
        \end{tikzpicture}
        \caption{Illustration of $x_v$ and $x_N^*$ in Figure~\ref{fig: x_v}.}
        \label{fig: zoom x_v x_N^*}
    \end{figure}
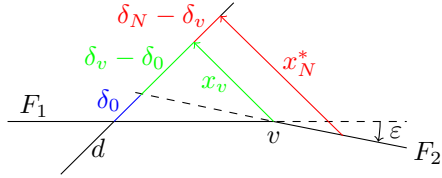
    
    Since $x_v$ and $x_N^*(d)$ are collinear, we can apply Thales's theorem in Figure~\ref{fig: zoom x_v x_N^*} and obtain that $\delta_N - \delta_0 = (\delta_v - \delta_0) \frac{\|x_N^*(d)\|}{\|x_v(d)\|}$. Then, $\delta_N$ is maximized when $\|x_N^*(d)\|$ is maximal, so $x_N^*(d) = x_2$ during the crossing. 
    We know from Theorem~\ref{thm:minimum on the vertices} that $x_M^*(d) \in \partial X$ for all $d \in \mathbb{S}$.
    Then, as in Lemma~\ref{lemma: x_N^*(d) and x_M^*(d) cst on faces}, $x_N^*$ and $x_M^*$ are constant and different since $x_N^*$ is continuous in $d$, so $x_M^*(d) = x_1$.      
\end{proof}

\vspace{3mm}

\begin{lemma}\label{lemma: crossing before v_pi}
    During the crossing of vertices before $v_\pi$ as $\beta$ increases, $r_{X,Y}(d)$ decreases.
\end{lemma}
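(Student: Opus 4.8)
The plan is to restrict $r_{X,Y}$ to a single vertex crossing, reduce it there to an explicit trigonometric function of the angle $\beta$ alone, and show that this function is strictly decreasing; doing so at each vertex crossed before $v_\pi$ then gives the statement (and, combined with the fact that $r_{X,Y}$ is constant on faces of $\partial Y$, Lemma~\ref{lemma:r(d) cst on faces}, it shows $r_{X,Y}$ decreases all the way from $\beta=0$ to $v_\pi$). So fix a vertex $v$ crossed before $v_\pi$, lying between faces $F_1$ and $F_2$ of $\partial Y$ with external angle $\varepsilon$ at $v$. By Lemma~\ref{lemma: x_N^*(d) and x_M^*(d) cst before v_pi}, throughout this crossing $x_N^*(d)=x_2$ and $x_M^*(d)=x_1$ are constant, $y_N^*(d)$ is leading and lies on $F_2$, and $y_M^*(d)$ is trailing and lies on $F_1$. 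Hence, as in the proof of Theorem~\ref{thm:varying x_M and x_N}, $r_{X,Y}(d)=\lambda_N(\beta)/\lambda_M(\beta)$ where $\lambda_M:=\|x_1+y_M^*(d)\|$ and $\lambda_N:=\|x_2+y_N^*(d)\|$, and $x_1+y_M^*(d)=\lambda_M d$, $x_2+y_N^*(d)=\lambda_N d$ both lie on $\mathbb{R}^+d$.

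Next I would derive a closed form. Write $\widehat{d,F_i}$ for the signed angle from $d$ to the direction of $F_i$. Applying the law of sines in the triangle bounded by the line through $F_1$, the ray $\mathbb{R}^+d$, and the segment $x_1$ joining $y_M^*(d)$ to $\lambda_M d$ — exactly as in the computations behind Lemmas~\ref{lemma: x_N^*(d) and x_M^*(d) cst on faces} and~\ref{lemma:r(d) cst on faces} — and likewise on $F_2$ with $x_2$, one gets $\lambda_M=C_1/\sin\widehat{d,F_1}$ and $\lambda_N=C_2/\sin\widehat{d,F_2}$, where $C_1,C_2$ are constants determined by $F_1,F_2,x_1,x_2$ only; they are positive because $\lambda_M,\lambda_N>0$ and $\widehat{d,F_1},\widehat{d,F_2}\in(0,\pi)$ (the rays leave $Y$ through $F_1$ and $F_2$ transversally). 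As in~\eqref{eq:alpha + beta = cst on a face} one has $\widehat{d,F_i}=\gamma_i-\beta$ with $\gamma_i:=\widehat{x_2,F_i}$ constant, and by the discontinuity analysis in the proof of Lemma~\ref{lemma: v_pi and v_2pi}, $\gamma_2-\gamma_1=\varepsilon\in(0,\pi)$. Setting $u:=\gamma_2-\beta\in(0,\pi)$, this yields
\[
    r_{X,Y}(d)=\frac{C_2}{C_1}\cdot\frac{\sin(\gamma_1-\beta)}{\sin(\gamma_2-\beta)}=\frac{C_2}{C_1}\cdot\frac{\sin(u-\varepsilon)}{\sin u}.
\]

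Finally I would differentiate: since
\[
    \frac{d}{du}\left(\frac{\sin(u-\varepsilon)}{\sin u}\right)=\frac{\cos(u-\varepsilon)\sin u-\sin(u-\varepsilon)\cos u}{\sin^2 u}=\frac{\sin\varepsilon}{\sin^2 u}>0
\]
for $\varepsilon\in(0,\pi)$, and $u=\gamma_2-\beta$ strictly decreases as $\beta$ increases, $r_{X,Y}(d)$ strictly decreases through the crossing, which is the claim. (To avoid calculus one may instead note that for $\beta>\beta'$ in the crossing interval, $\sin(\gamma_1-\beta')\sin(\gamma_2-\beta)-\sin(\gamma_2-\beta')\sin(\gamma_1-\beta)=\sin\varepsilon\,\sin(\beta-\beta')>0$ by the product-to-sum identities, so $r_{X,Y}$ at $\beta'$ exceeds its value at $\beta$.)

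I expect the delicate part to be the first paragraph rather than the trigonometry: one must extract cleanly from Lemma~\ref{lemma: x_N^*(d) and x_M^*(d) cst before v_pi} that $y_M^*(d)$ stays on $F_1$ and $y_N^*(d)$ on $F_2$ over the \emph{whole} crossing interval, so that $F_1,F_2,x_1,x_2,\varepsilon$ are genuinely constant and a single formula applies, and one must check the sign conditions $\widehat{d,F_1},\widehat{d,F_2}\in(0,\pi)$ and $C_1,C_2>0$. The degenerate situation "$X$ parallel to a face", where $x_N^*$ and $x_M^*$ need not be unique, does not occur strictly before $v_\pi$, since it would force $\gamma_i\in\{\pi,2\pi\}$ while $\gamma_i=\alpha+\beta<\pi$ there. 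Once these points are settled, the computation above is routine.
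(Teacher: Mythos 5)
Your proof is correct, but it takes a genuinely different route from the paper's. The paper splits each crossing into two phases (first only $y_N^*$ past the vertex, then both $y_N^*$ and the ray $\mathbb{R}^+d$ past it), introduces the auxiliary lengths $l$ and $m$ of Figures~\ref{fig:y_N^* crossing a vertex} and~\ref{fig:y_N^* and d crossing a vertex}, and uses the sine law in each phase to compare $r_{X,Y}(d)$ with the constant face values, obtaining $r_{X,Y}(d) < r_{F_1}$ in the first phase and $r_{X,Y}(d) > r_{F_2}$ in the second; strict pointwise monotonicity across the crossing is not actually established there, only the sandwich needed for the global argument. You instead collapse the two phases into the single closed form $r_{X,Y}(d) = (C_2/C_1)\sin(u-\varepsilon)/\sin u$ and differentiate, which gives strict monotonic decrease throughout the crossing --- a stronger conclusion that matches the lemma's literal wording better than the paper's own proof, and which (as you note) does not visibly depend on whether $0 \in X$, a case the paper must revisit in Lemma~\ref{lemma: 0 notin X}. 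What your approach buys in uniformity it pays for in sign bookkeeping: you need $C_1C_2>0$, equivalently $\sin\widehat{d,F_1}$ and $\sin\widehat{d,F_2}$ of constant equal sign over the whole crossing, and you correctly flag this as the delicate point. It does check out, but your parenthetical justification is slightly off: during the first phase the ray does \emph{not} leave $Y$ through $F_2$ (the point $x_2+y_N^*(d)$ lies on the translate $x_2+\mathrm{line}(F_2)$, outside $Y$). The clean argument is that $\widehat{d,F_2}=\widehat{d,F_1}+\varepsilon=\gamma_2-\beta\in(0,\gamma_2)\subset(0,\pi)$ in the first phase since $\gamma_2<\pi$ before $v_\pi$, and in the second phase $\widehat{d,F_1}=\gamma_1-\beta$ cannot reach $0$ while $y_M^*(d)$ remains an actual point of $F_1$ at finite positive $\lambda_M$ (a vanishing $\sin\widehat{d,F_1}$ would force $\lambda_M\to\infty$), so by continuity both sines stay positive. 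With that repair the argument is complete.
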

\begin{proof}
    The leading vector $y_N^*$ is outside and crosses a vertex $v$ between the faces $F_1$ and $F_2$ of $\partial Y$ while $\beta$ increases. We separate the vertex crossing into two parts: when only $y_N^* \in F_2$, and when both $d \in F_2$ and $y_N^* \in F_2$. 
    Let $\varepsilon > 0$ be the external angle of the vertex as shown on Figure~\ref{fig:y_N^* crossing a vertex}. 
    
    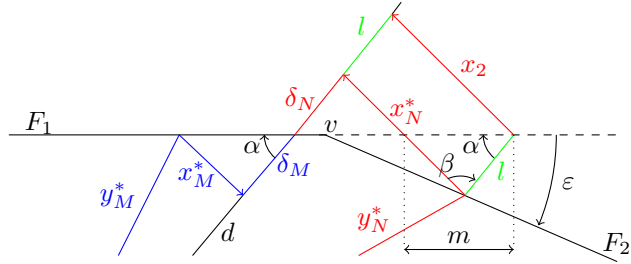
\begin{figure}[htbp!]
        \centering
        \begin{tikzpicture}[scale = 0.8]
            
            \draw (-5, 0) -- (0.2,0) -- (5, -2.1);
            \node at (0.3, 0.15) {$v$};
            \draw[dashed] (0.1, 0) -- (5,0);
            \node at (-4.5, 0.2) {$F_1$};
            \node at (5, -1.8) {$F_2$};
            
            \draw[->] (4,0) arc (0:-22:4);
            \node at (4.2, -0.8) {$\varepsilon$};
            
            \draw (-1.98, -2) -- (-1.15, -1);
            \draw (1.3, 2) -- (1.45, 2.2);
            \node at (-1.4, -1.6) {$d$};
            
            \draw[red] (0.75, -2) -- (2.5, -1);
            \node at (1, -1.4) {\textcolor{red}{$y_N^*$}};
            \draw[->, red] (2.5, -1) -- (0.5, 1);
            \node at (1.55, 0.4) {\textcolor{red}{$x_N^*$}};
            
            \draw[blue] (-3.2, -2) -- (-2.2, 0);
            \node at (-3.2, -1) {\textcolor{blue}{$y_M^*$}};
            \draw[->, blue] (-2.2, 0) -- (-1.15, -1);
            \node at (-1.9, -0.65) {\textcolor{blue}{$x_M^*$}};
            
            \draw[->, red] (3.3, 0) -- (1.3, 2);
            \node at (2.65, 1.1) {\textcolor{red}{$x_2$}};
            \draw[green] (2.5, -1) -- (3.3, 0);
            \node at (3.1, -0.6) {\textcolor{green}{$l$}};
            \draw[green] (0.5, 1) -- (1.3, 2);
            \node at (0.8, 1.8) {\textcolor{green}{$l$}};
           
            \draw[<-] (2.67, -0.75) arc (60:114:0.5);
            \node at (2.2, -0.5) {$\beta$};
            
            \draw[<-] (-0.8, 0) arc (180:230:0.5);
            \node at (-1, -0.2) {$\alpha$};
            \draw[blue] (-1.15, -1) -- (-0.3, 0);
            \node at (-0.3, -0.5) {\textcolor{blue}{$\delta_M$}};
            \draw[red] (-0.3, 0) -- (0.5, 1);
            \node at (-0.2, 0.6) {\textcolor{red}{$\delta_N$}};
            
            \draw[<-] (2.8, 0) arc (180:230:0.5);
            \node at (2.6, -0.2) {$\alpha$};
            
            \draw[dotted] (1.5, 0) -- (1.5, -1.9);
            \draw[dotted] (3.3, 0) -- (3.3, -1.9);
            \draw[<->] (1.5, -1.9) -- (3.3, -1.9);
            \node at (2.4, -1.7) {$m$};
            
        \end{tikzpicture}
        \caption{Part I of the crossing of vertex $v$ by $y_N^*$ leading and outside as $\beta$ increases.}
        \label{fig:y_N^* crossing a vertex}
    \end{figure}
    
    According to Lemma~\ref{lemma:r(d) cst on faces}, $r_{X,Y}$ is constant on faces of $\partial Y$ and we call $r_{F_1}$ its value on the face $F_1$.
    If $F_1$ was prolonged past $v$ with a straight line (dashed line on Figure~\ref{fig:y_N^* crossing a vertex}), then we would have $y_N^*(d) \in F_1$ and $r_{X,Y}(d) = r_{F_1}$. 
    But, $y_N^*(d) \in F_2$ as proven in Lemma~\ref{lemma: x_N^*(d) and x_M^*(d) cst before v_pi} because the crossing occurs before $v_\pi$. We call $l$ the resulting difference in $\delta_N$ as illustrated on Figure~\ref{fig:y_N^* crossing a vertex}. Notice that the two green segments of length $l$ in Figure~\ref{fig:y_N^* crossing a vertex} are parallel. We parametrize the position of $y_N^*$ on $F_2$ with the length $m$ as defined on Figure~\ref{fig:y_N^* crossing a vertex}. When $y_N^* = v$, $m = 0$, and $m$ increases with $\beta$. Using the sine law we obtain
    \begin{equation}\label{eq:loss crossing part 1}
        \frac{m}{\sin \beta} = \frac{l}{\sin (\pi - \alpha - \beta)} = \frac{l}{\sin(\alpha + \beta)}.
    \end{equation}
    Then,
    \begin{equation}\label{eq:r(d) r_F_1}
        r_{X,Y}(d) = \frac{D + \delta_N}{D - \delta_M} = \frac{D + \delta_N + l}{D - \delta_M} - \frac{l}{D - \delta_M} = r_{F_1} - \frac{m \sin (\alpha + \beta)}{(D - \delta_M) \sin(\beta)}.
    \end{equation}
    By definition the length $m$ is positive. Since $-x_M^* \in Y^\circ$ but $y_M^* \in \partial Y$, we have $D - \delta_M = \|y_M^* + x_M^* \| > 0$. Before $v_\pi$ we have $\alpha + \beta \in (\alpha_0, \pi)$. In that case $\sin(\alpha + \beta) > 0$ and $\sin(\beta) > 0$.
    Therefore, the term subtracted from $r_{F_1}$ is positive, i.e., $r_{X,Y}(d) < r_{F_1}$. 
    
    \vspace{2mm}
    
    We can now tackle the second part of the crossing, when $y_N^*$ and $d$ both have crossed the vertex as illustrated on Figure~\ref{fig:y_N^* and d crossing a vertex}.
    
    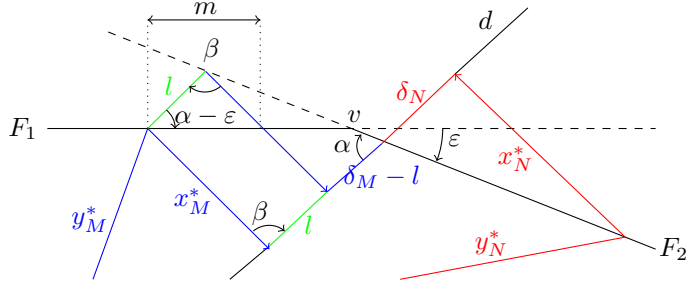
\begin{figure}[htbp!]
        \centering
        \begin{tikzpicture}[scale = 0.8]
            
            \draw (-5, 0) -- (0,0) -- (5, -2);
            \node at (0.05, 0.2) {$v$};
            \draw[dashed] (-4, 1.6) -- (0,0) -- (5,0);
            \node at (-5.4, 0) {$F_1$};
            \node at (5.3, -2) {$F_2$};
            
            \draw[->] (1.5,0) arc (0:-21:1.5);
            \node at (1.7, -0.2) {$\varepsilon$};
            
            \draw (-2, -2.5) -- (-1.35, -1.95);
            \draw (1.7, 0.9) -- (2.9, 2);
            \node at (2.2, 1.8) {$d$};
            
            \draw[red] (0.8, -2.5) -- (4.5, -1.8);
            \node at (2.3, -1.9) {\textcolor{red}{$y_N^*$}};
            \draw[->, red] (4.5, -1.8) -- (1.7, 0.9);
            \node at (2.7, -0.5) {\textcolor{red}{$x_N^*$}};
            
            \draw[blue] (-4.25, -2.5) -- (-3.35, 0);
            \node at (-4.3, -1.5) {\textcolor{blue}{$y_M^*$}};
            \draw[->, blue] (-3.35, 0) -- (-1.35, -2);
            \node at (-2.6, -1.2) {\textcolor{blue}{$x_M^*$}};
            
            \draw[->, blue] (-2.4, 0.95) -- (-0.4, -1.05);
            \draw[green] (-3.35, 0) -- (-2.4, 0.95);
            \node at (-3, 0.7) {\textcolor{green}{$l$}};
            
             \draw[green] (-1.35, -1.95) -- (-0.4, -1.05);
             \node at (-0.7, -1.6) {\textcolor{green}{$l$}};
             
           
            \draw[<-] (-2.9, 0) arc (0:50:0.4);
            \node at (-2.4, 0.2) {$\alpha - \varepsilon$};
            \draw[->] (-2.15, 0.7) arc (-50:-130:0.4);
            \node at (-2.3, 1.3) {$\beta$};
            \draw[<-] (-1.1, -1.7) arc (50:130:0.4);
            \node at (-1.5, -1.4) {$\beta$};
            
            \draw[<-] (0.16, -0.1) arc (140:230:0.3);
            \node at (-0.15, -0.3) {$\alpha$};
            \draw[blue] (0.5, -0.25) -- (-0.4, -1.05);
            \node at (0.5, -0.8) {\textcolor{blue}{$\delta_M - l$}};
            \draw[red] (0.5, -0.25) -- (1.7, 0.9);
            \node at (1, 0.6) {\textcolor{red}{$\delta_N$}};
            
            \draw[dotted] (-3.35, 0) -- (-3.35, 1.8);
            \draw[dotted] (-1.5, 0) -- (-1.5, 1.8);
            \draw[<->] (-3.35, 1.8) -- (-1.5, 1.8);
            \node at (-2.4, 2) {$m$};
        \end{tikzpicture}
        \caption{Part II of the crossing of vertex $v$ by $y_N^*$ leading and outside as $\beta$ increases.}
        \label{fig:y_N^* and d crossing a vertex}
    \end{figure}
    
    If $F_2$ was prolonged with a straight line before $v$ and $y_M^* \in F_2$, then we would have $r_{X,Y}(d) = r_{F_2}$, value of $r_{X,Y}$ on $F_2$. But that is not the case, $y_M^*(d) \in F_1$ and the resulting difference in $\delta_M$ is called $l$. Using the sine law in Figure~\ref{fig:y_N^* and d crossing a vertex}, we can relate $l$ to $m$
    \begin{equation}\label{eq:gain crossing part 2}
         \frac{m}{\sin \beta} = \frac{l}{\sin(\pi - \beta - \alpha + \varepsilon)} = \frac{l}{\sin(\alpha + \beta - \varepsilon)}.
    \end{equation}
    We have $\alpha + \beta \in (\alpha_0, \pi)$, so $\sin(\beta) > 0$.
    If $\alpha$ was still measured between $d$ and $F_1$, then its value would be $\alpha_{F_1} = \alpha - \varepsilon$. Since we are before the crossing of $v_\pi$, $\alpha_{F_1} + \beta \in (\alpha_0, \pi)$, i.e., $\alpha + \beta - \varepsilon \in (\alpha_0, \pi)$.
    This yields $\sin(\alpha + \beta - \varepsilon) > 0$, which makes $l > 0$, because the length $m$ is positive by definition.
    Then,
    \begin{equation}\label{eq:r(d) r_F_2}
         r_{F_2} = \frac{D + \delta_N}{D - (\delta_M - l)} = \frac{D + \delta_N}{D - \delta_M + l} < \frac{D + \delta_N}{D - \delta_M} = r_{X,Y}(d).
    \end{equation}
    Thus, the ratio $r_{X,Y}$ decreases during the crossing of a vertex before $v_\pi$.  
\end{proof}

\vspace{3mm}

\begin{lemma}\label{lemma: crossing v_pi}
    During the crossing of $v_\pi$, the ratio $r_{X,Y}(d)$ reaches a local minimum.
\end{lemma}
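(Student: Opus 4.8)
The plan is not to re-examine the geometry of the $v_\pi$ crossing in detail, but to splice together the behaviour of $r_{X,Y}$ immediately before, during, and immediately after this crossing, relying only on the continuity of $r_{X,Y}$ and the monotonicity results already established. Parametrize $d\in\mathbb{S}\cap\mathcal{P}$ by $\beta$ as in the proof of Theorem~\ref{thm:varying x_M and x_N}, and let $[\beta_1,\beta_2]$ be the $\beta$-interval on which the crossing of $v_\pi$ is ongoing. Since $\dim X=1$ we have $\max\{\|x_1\|,\|x_2\|\}>0$, so the defining condition $\|x_v\|<\max\{\|x_1\|,\|x_2\|\}$ holds on a genuine interval and $\beta_1<\beta_2$; by the position of $v_\pi$ this interval lies inside $(0,\pi)$.

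First I would show that $r_{X,Y}$ is non-increasing in $\beta$ on $[0,\beta_1]$. This interval is covered by faces of $\partial Y$, on each of which $r_{X,Y}$ is constant by Lemma~\ref{lemma:r(d) cst on faces}, and by the crossings of the vertices preceding $v_\pi$, on each of which $r_{X,Y}$ decreases with $\beta$ by Lemma~\ref{lemma: crossing before v_pi} (whose hypotheses hold thanks to Lemma~\ref{lemma: x_N^*(d) and x_M^*(d) cst before v_pi}). Because $r_{X,Y}$ is continuous — its numerator and denominator are continuous by the results of Section~\ref{sec:continuity} and its denominator is positive by Proposition~\ref{prop: r_X,Y well-defined}(iv) — a function that is piecewise non-increasing along this covering is non-increasing on all of $[0,\beta_1]$. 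By the mirror argument, invoking Lemma~\ref{lemma:r(d) cst on faces} together with Lemmas~\ref{lemma: x_N^*(d) and x_M^*(d) cst after v_pi} and \ref{lemma: crossing after v_pi} (none of which relies on the present lemma), $r_{X,Y}$ is non-decreasing in $\beta$ on $[\beta_2,\pi]$.

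The conclusion then follows by a sandwich argument. Since $r_{X,Y}$ is continuous it attains its minimum over the compact interval $[\beta_1,\beta_2]$ at some $\beta^*$; set $d^*=d(\beta^*)$, a direction crossed during the crossing of $v_\pi$. Fix a small $\varepsilon>0$. If $\beta\in[\beta_1-\varepsilon,\beta_1]$ then $r_{X,Y}(d(\beta))\ge r_{X,Y}(d(\beta_1))\ge r_{X,Y}(d^*)$ by the monotonicity and the choice of $\beta^*$; if $\beta\in[\beta_1,\beta_2]$ then $r_{X,Y}(d(\beta))\ge r_{X,Y}(d^*)$ by the choice of $\beta^*$; and if $\beta\in[\beta_2,\beta_2+\varepsilon]$ then $r_{X,Y}(d(\beta))\ge r_{X,Y}(d(\beta_2))\ge r_{X,Y}(d^*)$. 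Hence $r_{X,Y}$ attains its minimum over a neighbourhood of $d^*$ in $\mathbb{S}\cap\mathcal{P}$ at $d^*$, so $d^*$ is a local minimizer of $r_{X,Y}$, reached during the crossing of $v_\pi$.

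The main obstacle is the bookkeeping behind the two ``piecewise $\Rightarrow$ global'' monotonicity steps: one must confirm that the portions of $\mathbb{S}\cap\mathcal{P}$ with $\beta\in[0,\beta_1]$ and with $\beta\in[\beta_2,\pi]$ are tiled precisely by faces and by the crossings treated in Lemmas~\ref{lemma: crossing before v_pi}, \ref{lemma: x_N^*(d) and x_M^*(d) cst after v_pi} and \ref{lemma: crossing after v_pi}; that successive vertex crossings do not overlap, so that each lies in an interval on which $r_{X,Y}$ is individually monotone; that $0\le\beta_1<\beta_2\le\pi$ so that both one-sided neighbourhoods are available; and that the dependence on the ``after $v_\pi$'' lemmas is acyclic. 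If one additionally wants to pin down $d^*$, the alternative is to split the $v_\pi$ crossing into its successive phases — $y_N^*$ resting at $v_\pi$, then $d$ but not $y_M^*$ past $v_\pi$, then $y_M^*$ past $v_\pi$ — and rerun the sine-law estimates of Lemmas~\ref{lemma: crossing before v_pi} and \ref{lemma: crossing after v_pi} with $\alpha+\beta$ straddling $\pi$; but the continuity argument above already yields the stated result.
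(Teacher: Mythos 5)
There is a genuine gap: the argument is circular. Your ``mirror'' step asserts that $r_{X,Y}$ is non-decreasing on $[\beta_2,\pi]$ by invoking Lemmas~\ref{lemma: x_N^*(d) and x_M^*(d) cst after v_pi} and \ref{lemma: crossing after v_pi}, with the parenthetical claim that neither relies on the present lemma. That claim is false: the proof of Lemma~\ref{lemma: x_N^*(d) and x_M^*(d) cst after v_pi} opens with ``$y_M^*$ is leading and inside as established in Lemma~\ref{lemma: crossing v_pi}'', and Lemma~\ref{lemma: crossing after v_pi} in turn builds on that configuration. The reason the dependence cannot be removed is substantive, not just an artifact of how the paper is organized: the swap of roles between $y_N^*$ and $y_M^*$ (before $v_\pi$, $y_N^*$ is leading and outside with $x_N^*=x_2$, $x_M^*=x_1$; afterwards, $y_M^*$ is leading and inside with $x_N^*=x_1$, $x_M^*=x_2$) happens \emph{during} the crossing of $v_\pi$, and the only place this is established is the detailed analysis of that crossing --- i.e., the very content of the paper's proof of the present lemma, which tracks $y_N^*$ resting at $v_\pi$, $x_N^*=x_{v_\pi}$ shrinking from $x_2$ toward $x_1$, and $x_M^*$ being forced to switch to $x_2$ to avoid $x_M^*=x_N^*$. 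Without that analysis you have no basis for the post-crossing monotonicity, so the sandwich has no right-hand wall.

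The paper instead proves the lemma directly and locally: using the sine-law relations \eqref{eq:loss crossing part 1}--\eqref{eq:r(d) r_F_1} it shows $r_{X,Y}(d)<r_{F_1}$ while $d$ has not yet passed $v_\pi$, then identifies the switch of $x_M^*$ and shows $r_{X,Y}(d)<r_{F_2}$ afterwards, so the value during the crossing sits strictly below the constant values on both adjacent faces. If you want to salvage your softer continuity-plus-monotonicity route, you must first independently establish the post-crossing configuration (the role swap), which essentially amounts to redoing the paper's argument; your own closing paragraph sketches exactly this fallback, and it is in fact the necessary path rather than an optional refinement. A secondary, fixable point: you use continuity of $r_{X,Y}$ in $\beta$, which the paper never states explicitly --- it follows from Lemma~\ref{lemma: lambda continuous}, compactness of $X$, and Proposition~\ref{prop: r_X,Y well-defined}(iv), but you should say so.
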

\begin{proof}
    Recall that before the crossing, $x_N^*(d) = x_2$ and $x_M^*(d) = x_1$.
    During the crossing of $v_\pi$, i.e., when $\| x_{v_\pi} \| < \max \{ \|x_1\|, \|x_2\| \}$, we have $\alpha + \beta \leq \pi$ but $\alpha + \beta + \varepsilon > \pi$. The situation is illustrated on Figure~\ref{fig: y_N^* crossing v_pi}. We showed in Lemma~\ref{lemma: x_N^*(d) and x_M^*(d) cst before v_pi} that $y_N^* = v_\pi$ and $x_N^*(d) = x_{v_\pi}$.
    
    \begin{figure}[htbp!]
        \centering
        \begin{tikzpicture}[scale = 0.8]
            
            \draw (-5, 0) -- (1.5, 0) -- (3.6, -2);
            \draw[dashed] (1.7, 0) -- (3.5,0);
            \node at (-4.5, 0.3) {$F_1$};
            \node at (3.6, -1.5) {$F_2$};
            
            \draw[->] (2.4,0) arc (0:-28:1.5);
            \node at (2.6, -0.4) {$\varepsilon$};
            
            \draw (-2.05, -2) -- (-1.5, -1.35);
            \draw (0.75, 1.25) -- (1, 1.5);
            \node at (-1.6, -1.7) {$d$};
            
            \draw[red] (-0.7, -2) -- (1.5, 0);
            \node at (0.8, -1.1) {\textcolor{red}{$y_N^*$}};
            \draw[red, ->] (1.5, 0) -- (0.25, 0.675);
            \node at (1.1, 0.5) {\textcolor{red}{$x_{v_\pi}$}};
            \draw[red] (-0.3, 0) -- (0.25, 0.675);
            \node at (-1, 0.5) {\textcolor{red}{$\delta_N = \delta_{v_\pi}$}};
            
            \draw[blue] (-4.5, -2) -- (-4, 0);
            \node at (-4.6, -1) {\textcolor{blue}{$y_M^*$}};
            \draw[->, blue] (-4, 0) -- (-1.5, -1.35);
            \node at (-3, -0.9) {\textcolor{blue}{$x_M^*$}};
            \draw[blue] (-1.5, -1.35) -- (-0.3, 0);
            \node at (-0.5, -0.8) {\textcolor{blue}{$\delta_M$}};
           
          \draw[<-] (-1.25, -1.1) arc (60:130:0.5);
          \node at (-1.7, -0.8) {$\beta$};
            
            \draw[<-] (-0.8, 0) arc (180:230:0.5);
            \node at (-1, -0.2) {$\alpha$};
            
            \draw[->, green] (3, 0) -- (0.75, 1.25);
            \node at (2, 0.8) {\textcolor{green}{$x_2$}};
            \draw[green] (0.25, 0.675) -- (0.75, 1.25);
            \node at (0.45, 1.2) {\textcolor{green}{$l$}};
            
        \end{tikzpicture}
        \caption{Crossing of $v_\pi$, with $y_N^* = v_\pi$.}
        \label{fig: y_N^* crossing v_pi}
    \end{figure}
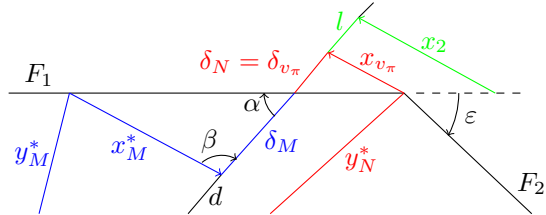
    
    If $F_1$ was prolonged with a straight line (dashed line of Figure~\ref{fig: y_N^* crossing v_pi}), we would have $y_N^* \neq v_\pi$, $x_N^*(d) = x_2$ and the ratio would be $r_{F_1} = \frac{D + \delta_{v_\pi} + l}{D - \delta_M}$, which is the value of $r_{X,Y}$ on $F_1$. Since $d$ has not yet crossed $v_\pi$, $\alpha + \beta < \pi$ and thus \eqref{eq:loss crossing part 1} and \eqref{eq:r(d) r_F_1} still hold, leading to $r_{X,Y}(d) < r_{F_1}$.
    
    \vspace{2mm}
    
    Once $d$ has crossed $v_\pi$, we still have $y_N^* = v_\pi$ to maximize $\delta_N$. Then, the equality $x_N^*(d) = x_{v_\pi}$ holds during the whole crossing, i.e., as $x_{v_\pi}$ goes from $x_2$ to $x_1$. The second part of the crossing is illustrated on Figure~\ref{fig: d passed v_pi}.
    
    \begin{figure}[htbp!]
        \centering
        \begin{tikzpicture}[scale = 0.9]
            
            \draw (-5, -1.15) -- (0, 0) -- (3.4, -2.65);
            \draw[dashed] (0, 0) -- (-1.4, 1);
            \node at (-1.8, -0.2) {$F_1$};
            \node at (1.9, -1.2) {$F_2$};
            
            \draw (-2.7, -2.65) -- (-0.95, -1.5);
            \draw (1.6, 0.25) -- (2, 0.5);
            \node at (2.1, 0.4) {$d$};
            
            \draw[blue, ->] (-3.75, -0.85) -- (-0.95, -1.5);
            \node at (-2.4, -0.9) {\textcolor{blue}{$x_M^* = x_1$}};
            \draw[blue] (-4.5, -2.5) -- (-3.75, -0.85);
            \node at (-3.8, -1.8) {\textcolor{blue}{$y_M^*$}};
            \draw[blue] (-0.95, -1.5) -- (0.6, -0.45);
            \node at (0.2, -1.1) {\textcolor{blue}{$\delta_M$}};
            
            \draw[blue, ->] (3.2, -2.5) -- (-0.95, -1.5);
            \node at (1, -1.8) {\textcolor{blue}{$x_2$}};
            \draw[blue] (2.2, -2.65) -- (3.2, -2.5);
            
            \draw[red, ->] (0, 0) -- (0.9, -0.22);
            \node at (0.55, 0) {\textcolor{red}{$x_{v_\pi}$}};
            \draw[red] (-3.4, -2.65) -- (0, 0);
            \node at (-0.5, -0.65) {\textcolor{red}{$y_N^*$}};
            \draw[red] (0.6, -0.45) -- (0.9, -0.22);
            \node at (1, -0.5) {\textcolor{red}{$\delta_N$}};
            
            \draw[green, ->] (-1.2, 0.9) -- (1.6, 0.25);
            \node at (0.4, 0.7) {\textcolor{green}{$x_1$}};
            \draw[green] (0.9, -0.22) -- (1.6, 0.25);
            \node at (1.4, -0.15) {\textcolor{green}{$l$}};
            
        \end{tikzpicture}
        \caption{Illustration of the endpoint of $y_M^*$ switching from $F_1$ to $F_2$ during the crossing of $v_\pi$.}
        \label{fig: d passed v_pi}
    \end{figure}
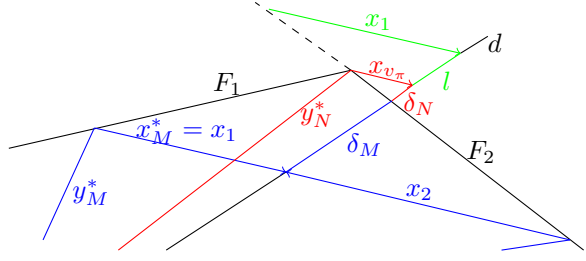
    
    Assume that during the entire crossing of $v_\pi$, $x_M^*(d) = x_1$. Then, at the end of the crossing we will have $y_M^* = v_\pi$ and $x_M^*(d) = x_{v_\pi} = x_N^*(d)$, which contradicts the definitions of $x_M^*(d)$ and $x_N^*(d)$, they must be different. Thus, $x_M^*(d)$ does not remain equal to $x_1$ during the entire crossing. Since $x_M^* \in \big\{x_1, x_2\big\}$, at some point $x_M^*$ switches to $x_2$ as $y_M^*$ switches from $F_1$ to $F_2$. This switching point is illustrated on Figure~\ref{fig: d passed v_pi}, and $y_M^*$ becomes the leading vector.
    
    After this switch, $y_M^* \in F_2$ and $x_M^*(d) = x_2$. If $F_2$ was prolonged with the dashed line on Figure~\ref{fig: d passed v_pi}, we would have $x_N^* = x_1$ instead of $x_{v_\pi}$ with a gain of $l$ for $\delta_N$ making the ratio equal to $r_{F_2} = \frac{D + \delta_N + l}{D - \delta_M}$, value of $r_{X,Y}$ on $F_2$. But $x_N^* = x_{v_\pi}$ and $l > 0$, thus $r_{F_2} > \frac{D + \delta_N}{D - \delta_M} = r_{X,Y}(d)$. Therefore, $r_{X,Y}$ reaches a local minimum during the crossing of $v_\pi$.  
\end{proof}

\vspace{3mm}

\begin{lemma}\label{lemma: x_N^*(d) and x_M^*(d) cst after v_pi}
  During the crossing of vertices after $v_\pi$ as $\beta$ increases until $\pi$, $x_N^*(d) = x_1$ and $x_M^*(d) = x_2$. They are constant different and both in $\partial X$.
\end{lemma}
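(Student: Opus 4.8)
The plan is to propagate forward the configuration that the proof of Lemma~\ref{lemma: crossing v_pi} leaves us in at the end of the crossing of $v_\pi$, namely $x_N^*(d)=x_1$ and $x_M^*(d)=x_2$ with $y_M^*$ leading, and to show that it persists for every $d$ between $v_\pi$ and $\beta=\pi$. Structurally this is the mirror image of Lemma~\ref{lemma: x_N^*(d) and x_M^*(d) cst before v_pi}, with the pairs $(x_N^*,y_N^*)$ and $(x_M^*,y_M^*)$ interchanged and the condition $\alpha+\beta\in(\alpha_0,\pi)$ replaced by $\alpha+\beta\in(\pi,2\pi)$: by Lemma~\ref{lemma: v_pi and v_2pi} the angle $\alpha+\beta$ has passed $\pi$ once $d$ has crossed $v_\pi$, and it has not yet reached $2\pi$ (the vertex $v_{2\pi}$ is crossed only after $\beta=\pi$), so $\sin(\alpha+\beta)<0$ on every face of $\partial Y$ met in this range.

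The core of the argument is to analyse, for fixed $d$, the restriction of $\lambda^*(\cdot,d)$ to the segment $X$. It is continuous (Lemma~\ref{lemma: lambda continuous}) and affine on each maximal sub-segment of $X$ whose points' rays leave $Y$ through one common face $F$: writing $F\subset\{y:\langle n,y\rangle=c\}$ with $n$ an outward normal, the exit identity $\langle n,\lambda^*(x,d)d-x\rangle=c$ gives $\lambda^*(x,d)=(c+\langle n,x\rangle)/\langle n,d\rangle$ with $\langle n,d\rangle>0$, so the slope of $\lambda^*$ along $X$ in the direction from $x_2$ to $x_1$ equals $\langle n,x_1-x_2\rangle/\langle n,d\rangle$, whose sign is that of $-\sin(\alpha+\beta)$ by the definitions of $\alpha$ and $\beta$ --- precisely what the sine-law identities in the proof of Lemma~\ref{lemma:r(d) cst on faces} record for $x\in\{x_1,x_2\}$. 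Every exit face occurring here lies between $v_\pi$ and $v_{2\pi}$, so each such slope is strictly positive. Hence when no vertex of $\partial Y$ is being crossed, $\lambda^*(\cdot,d)$ is affine and strictly increasing from $x_2$ to $x_1$ on all of $X$; during the crossing of a vertex $v$ it is the concatenation of two such increasing affine pieces joined at $x_v$ (only the two faces adjacent to $v$ being involved, as in the preceding lemmas), hence still strictly increasing from $x_2$ to $x_1$. In every case the minimum of $\lambda^*(\cdot,d)$ over $X$ is attained only at $x_2$ and the maximum only at $x_1$, so $x_M^*(d)=x_2$ and $x_N^*(d)=x_1$; these are constant, distinct, and both lie in $\partial X=\{x_1,x_2\}$. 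The sole degeneracy is a face of $\partial Y$ parallel to $X$, where $\sin(\alpha+\beta)=0$, $\lambda^*(\cdot,d)$ is constant on $X$ and $x_M^*,x_N^*$ are non-unique; there one picks the distinct endpoints $x_M^*=x_2$, $x_N^*=x_1$, consistently with Lemma~\ref{lemma: continuity of x_N}, exactly as in Lemma~\ref{lemma: x_N^*(d) and x_M^*(d) cst on faces}. One could instead mimic Lemma~\ref{lemma: x_N^*(d) and x_M^*(d) cst before v_pi} literally: after $v_\pi$ the leading outside vector is $y_M^*$; the sub-case ``$y_M^*=v$, $x_M^*=x_v$'' is impossible in the interior of a crossing since $x_v\notin\{x_1,x_2\}$ would violate Theorem~\ref{thm:minimum on the vertices}; and a Thales argument on the collinear $x_v$ and $x_M^*$ as in Figure~\ref{fig: zoom x_v x_N^*}, now with $\sin(\alpha+\beta)<0$, selects $x_M^*=x_2$, whence $x_N^*=x_1$.

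I expect the main obstacle to be sign bookkeeping: checking, under the paper's angle conventions, that the slope of $\lambda^*(\cdot,d)$ along $X$ carries the sign $-\operatorname{sign}\bigl(\sin(\alpha+\beta)\bigr)$ and that this is consistent with the pre-$v_\pi$ conclusion of Lemma~\ref{lemma: x_N^*(d) and x_M^*(d) cst before v_pi}, together with confirming that $\alpha+\beta$ remains in $(\pi,2\pi)$ on the whole interval from $v_\pi$ to $\beta=\pi$ --- i.e. that $v_{2\pi}$ is not crossed before $\beta=\pi$ --- so that no slope changes sign prematurely.
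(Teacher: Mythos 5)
Your main argument is correct, and it takes a genuinely different route from the paper. The paper proceeds by case bookkeeping: it imports from Lemma~\ref{lemma: crossing v_pi} that $y_M^*$ exits the crossing of $v_\pi$ as the leading, inside vector; rules out $x_M^*=x_v$ throughout a crossing because $x_v$ varies continuously while Theorem~\ref{thm:minimum on the vertices} pins $x_M^*$ to $\{x_1,x_2\}$; invokes the continuity of $x_N^*$ (Lemma~\ref{lemma: continuity of x_N}) to forbid $x_N^*$ from switching and taking the lead; and finishes with a Thales proportionality argument to place $x_N^*$ at $x_1$. Your central argument bypasses all of the leading/trailing machinery: you observe that $x_N^*(d)$ and $x_M^*(d)$ are respectively the argmax and argmin of the single function $\lambda^*(\cdot,d)$ on the segment $X$, that this function is piecewise affine with each piece's slope given explicitly by the exit-face normal as $\langle n,x_1-x_2\rangle/\langle n,d\rangle$, and that all slopes share the sign $-\operatorname{sign}(\sin(\alpha+\beta))$, which is positive throughout $(\,v_\pi,v_{2\pi})$ by Lemma~\ref{lemma: v_pi and v_2pi}. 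Strict monotonicity of $\lambda^*(\cdot,d)$ from $x_2$ to $x_1$ then forces $x_M^*=x_2$ and $x_N^*=x_1$ simultaneously, and your sign convention is validated by consistency with the pre-$v_\pi$ conclusion of Lemma~\ref{lemma: x_N^*(d) and x_M^*(d) cst before v_pi}. The residual worry you flag --- that $v_{2\pi}$ is not reached before $\beta=\pi$ --- is settled because $\alpha\in(0,\pi)$ always, so $\alpha+\beta<2\pi$ for $\beta<\pi$. What your approach buys is uniformity (the same computation re-derives Lemma~\ref{lemma: x_N^*(d) and x_M^*(d) cst before v_pi} and removes the dependence on the continuity lemma for $x_N^*$); what the paper's approach buys is direct reuse of the geometric vocabulary ($\delta_N$, $\delta_M$, leading/trailing) that the subsequent monotonicity lemmas are phrased in. Note that your argument shares with the paper the implicit assumption that at most the two faces adjacent to a single vertex are involved at any one time (so $\lambda^*(\cdot,d)$ has at most two affine pieces on $X$); this is not a new gap, as the paper assumes it throughout. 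Your secondary sketch (mimicking Lemma~\ref{lemma: x_N^*(d) and x_M^*(d) cst before v_pi} with $y_M^*$ as the leading vector and Thales on the collinear $x_v$, $x_M^*$) is essentially the paper's own proof.
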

\begin{proof}
    After the crossing of $v_\pi$, $\alpha + \beta \in (\pi, \alpha_0 + \pi)$ and $y_M^*$ is leading and inside as established in Lemma~\ref{lemma: crossing v_pi}. Thus, $y_M^*$ is the first to reach vertex $v$. Since $x_M^* \in \{x_1, x_2\}$ we cannot have $x_M^* = x_v$ during the entire crossing because $x_v$ is a continuous function of $\beta$. Thus $y_M^*$ passes $v$ and belongs to $F_2$. 
    In Lemma~\ref{lemma: continuity of x_N} we showed that $x_N^*$ is continuous in $d$. Thus, $x_N^*(d)$ cannot switch like $x_M^*(d)$ did around $v_\pi$ to take the lead. Instead, $x_N^*(d)$ is trailing as illustrated on Figure~\ref{fig: crossing after v_pi}.
    
    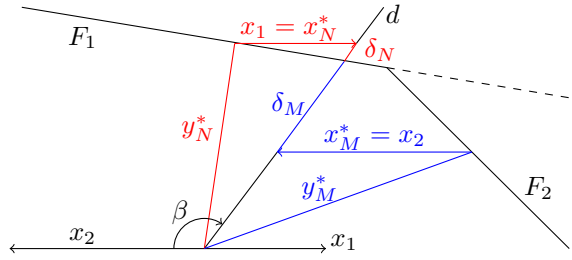
\begin{figure}[htbp!]
        \centering
        \begin{tikzpicture}[scale = 0.8]
            
            \draw[->] (0, 0) -- (-3.2, 0);
            \node at (-2, 0.2) {$x_2$};
            \draw[->] (0, 0) -- (2, 0);
            \node at (2.3, 0.1) {$x_1$};
            
            \draw (-3, 4) -- (3, 3) -- (6, 0);
            \node at (-2, 3.5) {$F_1$};
            \node at (5.5, 1) {$F_2$};
            \draw[dashed] (3, 3) -- (6, 2.5);
            
            \draw (0, 0) -- (1.2, 1.6);
            \draw (2.5, 3.4) -- (2.95, 4);
            \node at (3.1, 3.9) {$d$};
            
            \draw[red] (0, 0) -- (0.5, 3.4);
            \node at (-0.1, 2) {\textcolor{red}{$y_N^*$}};
            \draw[red, ->] (0.5, 3.4) -- (2.5, 3.4);
            \node at (1.4, 3.65) {\textcolor{red}{$x_1 = x_N^*$}};
            \draw[red] (2.5, 3.4) -- (2.3, 3.1);
            \node at (2.9, 3.3) {\textcolor{red}{$\delta_N$}};
            
            \draw[blue] (0, 0) -- (4.4, 1.6);
            \node at (1.9, 1) {\textcolor{blue}{$y_M^*$}};
            \draw[blue, ->] (4.4, 1.6) -- (1.2, 1.6);
            \node at (2.8, 1.85) {\textcolor{blue}{$x_M^* = x_2$}};
            \draw[blue] (1.2, 1.6) -- (2.3, 3.1);
            \node at (1.4, 2.4) {\textcolor{blue}{$\delta_M$}};
            
            \draw[->] (-0.5, 0) arc (180:54:0.5);
            \node at (-0.4, 0.6) {$\beta$};
            
        \end{tikzpicture}
        \caption{Crossing of a vertex $v$ after $v_\pi$.}
        \label{fig: crossing after v_pi}
    \end{figure}
    
    Since $y_N^* \in F_1$ during the crossing, we can apply Thales's theorem on Figure~\ref{fig: crossing after v_pi} and obtain that for a fixed $d$, $\delta_N$ is proportional to $\|x_N^*(d)\|$. Thus, to maximize $\delta_N$ we have $x_N^*(d) \in \partial X$ and, since $y_N^*$ is trailing, we have $x_N^*(d) = x_1$ during the entire crossing. By the definitions of $x_N^*(d)$ and $x_M^*(d)$, we have $x_N^*(d) \neq x_M^*(d)$. Since both $x_N^*(d)$ and $x_M^*(d)$ belong to $\partial X = \big\{x_1, x_2\big\}$, then $x_M^*(d) = x_2$ during the entire crossing.    
\end{proof}

\vspace{3mm}

\begin{lemma}\label{lemma: crossing after v_pi}
    During the crossing of vertices after $v_\pi$ as $\beta$ increases until $\pi$, $r_{X,Y}(d)$ increases. 
\end{lemma}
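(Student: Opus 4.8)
The plan is to mirror the proof of Lemma~\ref{lemma: crossing before v_pi}, interchanging the roles of $y_N^*$ and $y_M^*$ and reversing every inequality. By Lemma~\ref{lemma: x_N^*(d) and x_M^*(d) cst after v_pi} we have $x_N^*(d) = x_1$, $x_M^*(d) = x_2$, the leading vector is now $y_M^*$ (which is inside) while $y_N^*$ is trailing, and $\alpha + \beta \in (\pi, \alpha_0 + \pi)$. Fix a vertex $v$ crossed after $v_\pi$ and before $\beta = \pi$, lying between faces $F_1$ and $F_2$ of $\partial Y$ with external angle $\varepsilon > 0$. As $\beta$ increases the ray through $y_M^*$, being the leading one, reaches $v$ strictly before $d$ does (this is precisely what ``leading'' means via $\beta_N < \beta_M$, as fixed in Lemma~\ref{lemma: x_N^*(d) and x_M^*(d) cst on faces}), so I split the crossing into Part I, where $y_M^* \in F_2$ but $d, y_N^* \in F_1$, and Part II, where $d, y_M^* \in F_2$ but $y_N^* \in F_1$. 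On the faces themselves $r_{X,Y}$ is constant by Lemma~\ref{lemma:r(d) cst on faces}; denote these values $r_{F_1}$ and $r_{F_2}$.

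For Part I I would compare the actual configuration with the one obtained by prolonging $F_1$ past $v$ along a straight line: that hypothetical configuration has $y_M^* \in F_1$, hence ratio exactly $r_{F_1}$, and leaves $D$ and $\delta_N$ untouched since $d$ and $y_N^*$ still meet $F_1$. Because $Y$ is convex, the genuine face $F_2$ lies on the interior side of this prolongation, so the true $y_M^*$ satisfies $\|x_M^* + y_M^*\|$ \emph{strictly smaller} than the prolonged value; writing the deficit as $l > 0$ gives $\delta_M = \delta_M^{F_1} + l$, and the law of sines in the triangle cut off at $v$ relates $l$ to the arclength $m$ of $y_M^*$ along $F_2$ through an identity analogous to~\eqref{eq:loss crossing part 1} but with the angle opposite $l$ replaced by its supplement (we are now in the regime $\alpha + \beta > \pi$ rather than $\alpha + \beta < \pi$). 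With $\alpha + \beta > \pi$ and $\beta < \pi$ this re-confirms $l > 0$, and then $r_{X,Y}(d) = \tfrac{D + \delta_N}{D - \delta_M^{F_1} - l} > \tfrac{D + \delta_N}{D - \delta_M^{F_1}} = r_{F_1}$. For Part II I would symmetrically prolong $F_2$ backward past $v$: that configuration has $y_N^* \in F_2$, hence ratio exactly $r_{F_2}$, and again by convexity the genuine face $F_1$ lies on its interior side, so $\|x_N^* + y_N^*\|$ is strictly smaller than the prolonged value by some $l > 0$ (quantified by a sine law mirroring~\eqref{eq:gain crossing part 2}), giving $\delta_N = \delta_N^{F_2} - l$ and $r_{X,Y}(d) = \tfrac{D + \delta_N^{F_2} - l}{D - \delta_M} < r_{F_2}$.

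To conclude I would invoke continuity of $r_{X,Y}$ in $d$ (via Proposition~\ref{prop: r_X,Y well-defined}, Lemma~\ref{lemma: lambda continuous} and Lemma~\ref{lemma: continuity of x_N}): at the instant $d$ passes through $v$, the Part I estimate forces the value to be $\geq r_{F_1}$ and the Part II estimate forces it to be $\leq r_{F_2}$; since near that instant $m$, and hence $l$, stays bounded away from $0$, both estimates are strict, so $r_{F_1} < r_{F_2}$. Because $r_{X,Y}$ is constant on each face and jumps strictly upward across every vertex crossing occurring between $v_\pi$ and $\beta = \pi$, it increases over that whole range, which is the assertion.

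The main obstacle I anticipate is the angle bookkeeping in the two sine-law identities: unlike in Lemma~\ref{lemma: crossing before v_pi}, the angle sum $\alpha + \beta$ now exceeds $\pi$, so the triangles cut off at $v$ are oriented differently and formulas~\eqref{eq:loss crossing part 1}--\eqref{eq:gain crossing part 2} must be re-derived with the correct supplementary angles before their signs can be trusted. The safe route is to first pin down $l > 0$ purely from the convexity of $Y$ (the next face always bends toward the interior), and only then match that sign against the trigonometric expression; a secondary point to watch is simply that it is $y_M^*$, not $y_N^*$, that is leading here, so Part I genuinely precedes Part II.
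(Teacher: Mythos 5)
Your proposal is correct in substance, but it takes a genuinely different route from the paper. The paper dispatches this lemma with a three-line symmetry argument: setting $\beta' := \pi - \beta$ and exchanging $x_1$ with $x_2$ turns the crossing of Figure~\ref{fig: crossing after v_pi} into exactly the crossing of Figure~\ref{fig:y_N^* crossing a vertex} already analyzed in Lemma~\ref{lemma: crossing before v_pi}, where $r_{X,Y}$ decreases as $\beta'$ increases; hence $r_{X,Y}$ increases in $\beta$. You instead re-run the full two-part geometric computation with the roles of $y_M^*$ and $y_N^*$ exchanged, obtaining $r_{X,Y}(d) > r_{F_1}$ while only $y_M^*$ has passed $v$ and $r_{X,Y}(d) < r_{F_2}$ once $d$ has also passed it, and you sensibly anchor the sign of $l$ in the convexity of $Y$ (the next face bends toward the interior) rather than in the sine-law identities, whose angles would indeed have to be re-derived in the regime $\alpha + \beta > \pi$. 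What your version buys is self-containedness: the paper's reversal silently requires checking that the substitution really maps one configuration onto the other (leading/trailing and inside/outside statuses swap consistently and the quotient is unchanged by the reparametrization), a verification you avoid entirely. What it costs is length; also note that your final step --- inferring ``increases'' from the two one-sided bounds together with continuity at the instant $d$ passes $v$ --- is exactly as complete as the paper's own conclusion in Lemma~\ref{lemma: crossing before v_pi}, so it meets, but does not exceed, the paper's standard of rigor on that point.
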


\begin{proof}
    The leading vector $y_M^*$ is inside and crosses a vertex $v$ between faces $F_1$ and $F_2$ as $\beta$ increases. 
    We define $\beta' : = \pi - \beta$. 
    Then, reversing the crossing illustrated on Figure~\ref{fig: crossing after v_pi} is exactly the crossing illustrated on Figure~\ref{fig:y_N^* crossing a vertex} with $\beta'$ increasing and $x_1$ and $x_2$ exchanged. According to Lemma~\ref{lemma: crossing before v_pi}, in that reversed crossing $r_{X,Y}$ is decreasing.
    Therefore, $r_{X,Y}$ increases during the crossing of vertices after $v_\pi$ as $\beta$ increases until $\pi$.  
\end{proof}

\vspace{3mm}

\begin{lemma}\label{lemma: beta > pi}
    For $\beta > \pi$, $r_{X,Y}(d)$ decreases until $v_{2\pi}$ where it reaches a local minimum. After $v_{2\pi}$ as $\beta$ increases until $2\pi$, $r_{X,Y}(d)$ increases.
\end{lemma}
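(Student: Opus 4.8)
The plan is to reduce the range $\beta \in (\pi, 2\pi)$ to the range $\beta \in [0,\pi)$ already settled in Lemmas~\ref{lemma: x_N^*(d) and x_M^*(d) cst before v_pi}--\ref{lemma: crossing after v_pi}, by swapping the two extremities of $X$. Recall that directions $d \in \mathcal{P} \cap \mathbb{S}$ are parametrized by $\beta := \widehat{x_2,d}$, that $\alpha := \widehat{d,\partial Y}$, and that $v_{2\pi}$ is the vertex of $Y \cap \mathcal{P}$ at which $\alpha+\beta$ crosses $2\pi$ (Lemma~\ref{lemma: v_pi and v_2pi}). Keeping the standing assumption $0 \in X$ of Figure~\ref{fig:angles illustration} (the case $0 \notin X$ being deferred to Lemma~\ref{lemma: 0 notin X}) and excluding for the moment the sub-case $x_1 = 0$, the vectors $x_1$ and $x_2$ are antiparallel, so relabeling $x_1' := x_2$, $x_2' := x_1$ and reparametrizing by $\beta' := \widehat{x_2',d}$ yields $\beta' = \beta - \pi$ on $\beta \in [\pi, 2\pi)$, with $\beta'$ increasing in $\beta$ and $\alpha$ unchanged. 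Since the numerator and denominator of $r_{X,Y}$ do not depend on which extremity of $X$ is named $x_1$ and which $x_2$, the function $r_{X,Y}$ of $d$ is literally the same; only its $\beta$-labeling has been shifted by $\pi$.

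Because $\alpha+\beta' = \alpha+\beta-\pi$, the vertex at which $\alpha+\beta'$ crosses $\pi$ is precisely $v_{2\pi}$: for the relabeled data $v_{2\pi}$ plays exactly the role that $v_\pi$ played for the original data. Hence $\beta \in (\pi, v_{2\pi})$ becomes $\beta' \in (0, v_\pi')$, the crossing of $v_{2\pi}$ becomes the crossing of $v_\pi'$, and $\beta \in (v_{2\pi}, 2\pi)$ becomes $\beta' \in (v_\pi', \pi)$. I would then invoke the already-proven lemmas on the relabeled data: Lemmas~\ref{lemma: x_N^*(d) and x_M^*(d) cst before v_pi} and \ref{lemma: crossing before v_pi} give that $r_{X,Y}$ decreases as $\beta$ runs from $\pi$ to $v_{2\pi}$ (with $x_N^*(d) = x_1$ and $x_M^*(d) = x_2$ there, which matches Lemma~\ref{lemma: x_N^*(d) and x_M^*(d) cst after v_pi} at $\beta = \pi^-$ by continuity of $x_N^*$); Lemma~\ref{lemma: crossing v_pi} gives the local minimum of $r_{X,Y}$ at $v_{2\pi}$ (with $x_M^*$ switching from $x_2$ back to $x_1$ during that crossing); and Lemmas~\ref{lemma: x_N^*(d) and x_M^*(d) cst after v_pi} and \ref{lemma: crossing after v_pi} give that $r_{X,Y}$ increases as $\beta$ runs from $v_{2\pi}$ to $2\pi$. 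One also checks that $v_{2\pi}$ really lies strictly inside $(\pi, 2\pi)$: at $\beta = \pi$ one has $\alpha+\beta < \alpha_0+\pi < 2\pi$, whereas at $\beta = 2\pi$ one has $\alpha+\beta = \alpha_0+2\pi > 2\pi$, and $\alpha+\beta$ is constant on faces of $\partial Y$, so $\alpha+\beta$ crosses $2\pi$ at a vertex lying in that open interval.

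An equivalent route, avoiding the relabeling, is to retrace the proofs of Lemmas~\ref{lemma: x_N^*(d) and x_M^*(d) cst before v_pi}--\ref{lemma: crossing after v_pi} on $\beta \in (\pi, 2\pi)$ directly, keeping in mind that $\sin\beta < 0$ on this range. In the analogues of \eqref{eq:loss crossing part 1}, \eqref{eq:gain crossing part 2} and \eqref{eq:r(d) r_F_1}, monotonicity of $r_{X,Y}$ across a vertex crossing is governed by the sign of $\sin(\alpha+\beta)/\sin\beta$: on $(\pi, v_{2\pi})$ one has $\alpha+\beta \in (\pi, 2\pi)$, so $\sin(\alpha+\beta) < 0$, the quotient is positive, and $r_{X,Y} < r_F$ (decrease); after $v_{2\pi}$ one has $\alpha+\beta \in (2\pi,\, 2\pi+\alpha_0)$, so $\sin(\alpha+\beta) > 0$, the quotient is negative, and $r_{X,Y} > r_F$ (increase); and the crossing of $v_{2\pi}$ itself is handled word for word as the crossing of $v_\pi$ in Lemma~\ref{lemma: crossing v_pi}.

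The main obstacle here is not conceptual but bookkeeping. One must confirm that the hypotheses of the cited lemmas genuinely hold for the relabeled data: that the ``leading/trailing'' and ``inside/outside'' roles of $y_N^*$ and $y_M^*$ transfer correctly across $\beta = \pi$ (the leading ray switches from $y_M^*$ to $y_N^*$ there, mirroring the switch of $x_M^*$ at $v_\pi$ and $v_{2\pi}$), and that $x_N^*(d), x_M^*(d) \in \{x_1,x_2\}$ throughout the range. Finally, the degenerate configuration $x_1 = 0$ (in which $x_1$ and $x_2$ are not antiparallel, so the shift $\beta' = \beta - \pi$ is unavailable) is excluded from the relabeling argument and is instead handled by the direct route of the preceding paragraph, just as the case $0 \notin X$ is handled by Lemma~\ref{lemma: 0 notin X}.
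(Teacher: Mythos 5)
Your proposal is correct and is essentially the paper's own argument: the paper also disposes of $\beta \in (\pi, 2\pi)$ by a one-line symmetry reduction in which $v_{2\pi}$ plays the role of $v_\pi$ and Lemmas~\ref{lemma: x_N^*(d) and x_M^*(d) cst before v_pi}--\ref{lemma: crossing after v_pi} are reapplied verbatim. The only cosmetic difference is the symmetry chosen — the paper reverses the angular orientation (so the sweep runs backwards from $2\pi$ to $\pi$), whereas you swap the labels of $x_1$ and $x_2$ and shift $\beta$ by $\pi$ — and your version is, if anything, more careful about verifying that the hypotheses of the cited lemmas actually hold for the relabeled data.
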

\begin{proof}
    Let us change the angle convention, so that angles are now positively oriented in the counterclockwise orientation. The vertex that was previously labeled as $v_{2\pi}$ becomes the new $v_\pi$.
    Then, we only need to apply Lemmas~\ref{lemma: x_N^*(d) and x_M^*(d) cst before v_pi}, \ref{lemma: crossing before v_pi}, \ref{lemma: crossing v_pi}, \ref{lemma: x_N^*(d) and x_M^*(d) cst after v_pi} and \ref{lemma: crossing after v_pi} to this new configuration to conclude the proof.  
\end{proof}

\vspace{3mm}

\begin{lemma}\label{lemma: 0 notin X}
    All above results hold even if $0 \notin X$.
\end{lemma}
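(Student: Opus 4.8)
The plan is to reduce the case $0\notin X$ to the case $0\in X$ already handled in the preceding lemmas, exploiting the fact that the whole construction in \eqref{eq:r_(X,Y)} is invariant under the simultaneous shift $(X,Y)\mapsto(X+c,\,Y-c)$.

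First I would record this invariance. Fix $c\in\mathbb{R}^n$. The affine map $(x,y)\mapsto(x-c,\,y+c)$ is a bijection from $(X+c)\times(Y-c)$ onto $X\times Y$ that leaves the sum $x+y$ unchanged, and $-(X+c)=-X-c\subset Y^\circ-c=(Y-c)^\circ$. Hence $(X+c,Y-c)$ satisfies the hypotheses of Proposition~\ref{prop: r_X,Y well-defined} and of Theorem~\ref{thm:varying x_M and x_N} whenever $(X,Y)$ does, and for every $d\in\mathbb{S}$ the constrained optimizations defining the numerator and denominator of $r_{X+c,Y-c}(d)$ coincide with those defining $r_{X,Y}(d)$. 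Thus $r_{X+c,Y-c}\equiv r_{X,Y}$, and every auxiliary object transforms covariantly with the shift: the minimizing point $x_M^*(d)$ for $(X+c,Y-c)$ is $x_M^*(d)+c$ for $(X,Y)$, and likewise $x_N^*$, $y^*$, $y_M^*$, $y_N^*$, while the faces of $\partial(Y-c)$ are exactly the faces of $\partial Y$ shifted by $-c$.

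Next, given $(X,Y)$ as in Theorem~\ref{thm:varying x_M and x_N} with $0\notin X$, pick any $p\in X^\circ$ (which exists since $\dim X=1$) and set $c:=-p$. Then $0\in(X+c)^\circ$, the endpoints of $X+c$ are $x_1-p\neq 0$ and $x_2-p\neq 0$, and the line through $X+c$ passes through the origin, so every figure and argument in Lemmas~\ref{lemma: x_N^*(d) and x_M^*(d) cst on faces}--\ref{lemma: beta > pi} and in the proof of Theorem~\ref{thm:varying x_M and x_N} applies verbatim to $(X+c,Y-c)$, Figure~\ref{fig:angles illustration} included. Transporting each conclusion back through the covariance above yields the corresponding statement for the original $(X,Y)$: $r_{X,Y}$ is constant on faces of $\partial Y$ (Lemmas~\ref{lemma: x_N^*(d) and x_M^*(d) cst on faces}, \ref{lemma:r(d) cst on faces}), it is monotone through the successive vertex crossings with local minima at the images of $v_\pi$ and $v_{2\pi}$ (Lemmas~\ref{lemma: crossing before v_pi}--\ref{lemma: beta > pi}), and therefore $\underset{d\in\mathbb{S}}{\max}\,r_{X,Y}(d)$ is attained exactly at the two directions collinear with $X$, which is the assertion of Theorem~\ref{thm:varying x_M and x_N}.

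The content here is bookkeeping rather than a new idea, and that is precisely where the only care is needed, so I would flag it as the main obstacle. One must check that every hypothesis is stable under the shift (in particular $-(X+c)\subset(Y-c)^\circ$ and the nonzero-endpoint requirement), and one must keep track of which data are genuinely translation invariant and which are not: the ratio $r$, the identity of $x_M^*(d)$ and $x_N^*(d)$ as endpoints of $X$, and the combinatorial pattern of which face each of the three rays meets transform covariantly, whereas $D(d)=\underset{y\in Y}{\max}\{\|y\|:y\in\mathbb{R}^+d\}$ and the literal phrase ``$\mathbb{R}^+d$ meets face $F$ of $\partial Y$'' do not — after the shift these refer to $\partial(Y-c)$, and only the $r$-level and argmin/argmax conclusions may be read back into the original frame. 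Finally one identifies the maximizing direction produced for $(X+c,Y-c)$, namely that of the endpoint $x_2-p$, with ``collinear with $X$'' — equivalently with $\pm x_2$ once the line through $X$ contains the origin — thereby closing the argument in the form stated in Theorem~\ref{thm:varying x_M and x_N}.
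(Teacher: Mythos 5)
Your argument is correct, but it takes a genuinely different route from the paper's. The paper reproves the preceding lemmas directly in the new configuration: it assumes $x_1$ positively collinear with $x_2$, redefines $\delta_M(d) := \|x_M^*(d)+y_M^*(d)\| - D(d)$ so that both $y_N^*$ and $y_M^*$ are now ``outside'', redraws the vertex-crossing figures, and checks case by case that the monotonicity and local-minimum conclusions survive. You instead observe that $r_{X,Y}$ is invariant under the simultaneous shift $(X,Y)\mapsto(X+c,\,Y-c)$, since the constraint and the objective depend only on the sum $x+y$, and that choosing $c=-p$ with $p\in X^\circ$ places the shifted problem exactly in the configuration already treated (origin strictly inside the segment, both endpoints nonzero, $0\in(Y-c)^\circ$ so that $D$ is well defined); the conclusions then transport back because the argmax/argmin objects transform covariantly. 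This is shorter, avoids duplicating the geometric case analysis, and correctly isolates the one non-invariant ingredient (the face-crossing combinatorics of the ray $\mathbb{R}^+d$ refer to $\partial(Y-c)$ after the shift, so the ``above results'' are recovered only for the translated copy of $Y$ --- which is all the main theorem needs, though it is slightly weaker than the lemma's literal wording). Your closing remark also makes explicit something the paper leaves implicit in both Lemma~\ref{lemma: x_N^*(d) and x_M^*(d) cst on faces} and its own proof of this lemma: the identification of the maximizing direction $\pm(x_2-p)$ with $\pm x_2$ requires the line through $X$ to contain the origin, which is exactly the standing assumption (``$x_1$ positively collinear with $x_2$'') under which the paper works. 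In short, both proofs succeed; the paper's buys verbatim statements in the original frame at the cost of repeating the geometry, while yours buys economy and a clearer view of which hypotheses actually matter.
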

\begin{proof}
    In all the figures we made the implicit assumption that $0 \in X$, so that $x_1$ and $x_2$ were negatively collinear.
    Let $x_1$ be positively collinear with $x_2$ and $\|x_2\| > \|x_1\|$.
    
    On Figure~\ref{fig: x_N^*(d) = -x_M^*(d)}, we would now have $y_N^*(d)$ and $y_M^*(d)$ both outside. Then, the definition of $\delta_M$ should be adapted. Let $\delta_M(d) := \|x_M^*(d) + y_M^*(d)\| - D(d)$ and then $r_{X,Y}(d) = \frac{D+ \delta_N}{D + \delta_M}$. Except for this modification, we would still have $x_N^*(d) = x_2$ and $x_M^*(d) = x_1$. Thales theorem can be used similarly to show that $x_N^*(d) \in \partial X$. Therefore, Lemma~\ref{lemma: x_N^*(d) and x_M^*(d) cst on faces} holds.
    
    In the proof of Lemma~\ref{lemma:r(d) cst on faces} we still have $\delta_N / D$ and $\delta_M / D$ invariant with respect to $d$ on a given face of $\partial Y$, so $r_{X,Y}$ is still constant on faces.
    Lemma~\ref{lemma: v_pi and v_2pi} is not affected at all.
    The first part of the crossing of a vertex before $v_\pi$ as $\beta$ increases is illustrated by Figure~\ref{fig: y_N^* crossing with 0 not in X}.
    
    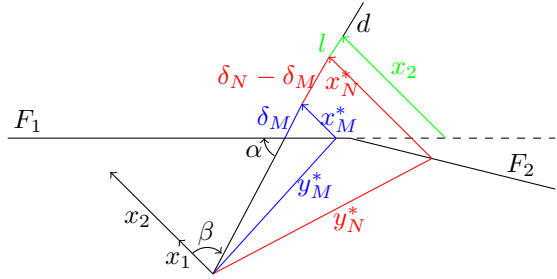
\begin{figure}[htbp!]
        \centering
        \begin{tikzpicture}[scale = 0.9]
            
            \draw[->] (0, 0) -- (-0.5, 0.5);
            \node at (-0.5, 0.2) {$x_1$};
            \draw[->] (-0.5, 0.5) -- (-1.5, 1.5);
            \node at (-1.1, 0.8) {$x_2$};
            
            \draw (-3, 2) -- (2, 2) -- (5, 1.25);
            \node at (-2.7, 2.25) {$F_1$};
            \node at (4.5, 1.6) {$F_2$};
            \draw[dashed] (2.1, 2) -- (5, 2);
            
            \draw[->, blue] (1.8, 2) -- (1.3, 2.5);
            \node at (1.85, 2.27) {\textcolor{blue}{$x_M^*$}};
            \draw[blue] (0, 0) -- (1.8, 2);
            \node at (1.5, 1.3) {\textcolor{blue}{$y_M^*$}};
            \draw[blue] (1.05, 2) -- (1.3, 2.5);
            \node at (0.9, 2.3) {\textcolor{blue}{$\delta_M$}};
            
            \draw[->, red] (3.2, 1.7) -- (1.7, 3.2);
            \node at (1.9, 2.8) {\textcolor{red}{$x_N^*$}};
            \draw[red] (0, 0) -- (3.2, 1.7);
            \node at (2, 0.8) {\textcolor{red}{$y_N^*$}};
            \draw[red] (1.3, 2.5) -- (1.7, 3.2);
            \node at (0.8, 2.9) {\textcolor{red}{$\delta_N - \delta_M$}};
            
            \draw[->, green] (3.4, 2) -- (1.9, 3.5);
            \node at (2.8, 3) {\textcolor{green}{$x_2$}};
            \draw[green] (1.7, 3.2) -- (1.9, 3.5);
            \node at (1.6, 3.4) {\textcolor{green}{$l$}};
            
            \draw (0,0) -- (1.05, 2);
            \draw (1.9, 3.5) -- (2.2, 4);
            \node at (2.2, 3.7) {$d$};
            
            \draw[->] (-0.3, 0.3) arc (135:45:0.3);
            \node at (-0.1, 0.6) {$\beta$};
            
            \draw[<-] (0.75, 2) arc (180:240:0.3);
            \node at (0.6, 1.8) {$\alpha$};
            
        \end{tikzpicture}
        \caption{Part I of the crossing of a vertex before $v_\pi$ with $0 \notin X$.}
        \label{fig: y_N^* crossing with 0 not in X}
    \end{figure}
    
    For $\delta_M$ to be minimized and $\delta_N$ to be maximized, the Thales theorem clearly proves that $x_M^* \in \partial X$ and $x_N^* \in \partial X$ during the crossing. We still have $x_N^*(d) = x_2$ and $x_M^*(d) = x_1$, so Lemma~\ref{lemma: x_N^*(d) and x_M^*(d) cst before v_pi} holds.
    
    Following the reasoning in Lemma~\ref{lemma: crossing before v_pi}, we have $l > 0$, which leads to
    \begin{equation*}
        r_{F_1} = \frac{D + \delta_N + l}{D + \delta_M} > \frac{D + \delta_N}{D + \delta_M} = r_{X,Y}(d).
    \end{equation*}
    During the second part, both $y_N^* \in F_2$ and $y_M^* \in F_2$ but $d \in F_1$. This situation is illustrated on Figure~\ref{fig: y_N^* and y_M^* crossing with 0 not in X}.
    
    \begin{figure}[htbp!]
        \centering
        \begin{tikzpicture}[scale = 0.9]
        
            \draw[->] (-1, 0) -- (-1.6, 0.8);
            \node at (-1.5, 0.2) {$x_1$};
            \draw[->] (-1.6, 0.8) -- (-2.5, 1.9);
            \node at (-2.3, 1.2) {$x_2$};
            
            \draw (-3, 2) -- (1, 2) -- (4, 0.5);
            \node at (-2.7, 2.25) {$F_1$};
            \node at (3.7, 0.9) {$F_2$};
            \draw[dashed] (1.1, 2) -- (4, 2);
            \draw[dashed] (1, 2) -- (-1, 3);

            \draw[->, blue] (1.6, 1.7) -- (1, 2.5);
            \node at (1.6, 2.2) {\textcolor{blue}{$x_M^*$}};
            \draw[blue] (-1, 0) -- (1.6, 1.7);
            \node at (1, 1) {\textcolor{blue}{$y_M^*$}};
            \draw[blue] (0.7, 2.15) -- (1, 2.5);
            \node at (0.4, 2.5) {\textcolor{blue}{$\delta_M - l$}};
            
            \draw[->, red] (3, 1) -- (1.5, 3.15);
            \node at (2.4, 2.4) {\textcolor{red}{$x_N^*$}};
            \draw[red] (-1, 0) -- (3, 1);
            \node at (1.5, 0.3) {\textcolor{red}{$y_N^*$}};
            \draw[red] (1, 2.5) -- (1.5, 3.15);
            \node at (0.7, 3.1) {\textcolor{red}{$\delta_N - \delta_M$}};
            
            \draw[green] (0.6, 2) -- (0.7, 2.15);
            \node at (0.5, 2.15) {\textcolor{green}{$l$}};
            
            \draw (-1,0) -- (0.6, 2);
            \draw (1.5, 3.15) -- (1.75, 3.5);
            \node at (-0.3, 1.3) {$d$};
            
            
            
        \end{tikzpicture}
        \caption{Part II of the crossing of a vertex before $v_\pi$ with $0 \notin X$.}
        \label{fig: y_N^* and y_M^* crossing with 0 not in X}
    \end{figure}
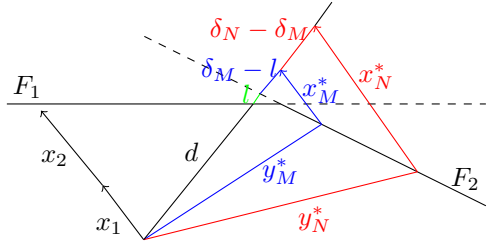
    
    We compare the current value of $r_{X,Y}(d)$ with $r_{F_2}$, its value on $F_2$:
    \begin{equation*}
        r_{F_2} = \frac{D + (\delta_N - l)}{D + (\delta_M - l)} \qquad \text{and} \qquad r_{X,Y}(d) = \frac{D + \delta_N}{D + \delta_M}.
    \end{equation*}
    Since $l > 0$ and $\delta_N > \delta_M$, a simple calculation shows that $r_{X,Y}(d) < r_{F_2}$. Therefore, $r_{X,Y}$ is decreasing during the crossing of a vertex before $v_\pi$ as $\beta$ increases, Lemma~\ref{lemma: crossing before v_pi} holds.
    
    During the crossing of $v_\pi$, $y_N^* = v_\pi$ and $x_N^* = x_{v_\pi}$ with its norm decreasing continuously until $x_N^* = x_1$, while $x_M^*$ will switch to $x_2$ in order to minimize $\delta_M$. This is the same process as described in Lemma~\ref{lemma: crossing v_pi}, so $r_{X,Y}$ also reaches a local minimum.
    
    Because all the results studied so far still hold, then Lemmas~\ref{lemma: x_N^*(d) and x_M^*(d) cst after v_pi}, \ref{lemma: crossing after v_pi} and \ref{lemma: beta > pi} hold too because they rely on those earlier results.  
\end{proof}

We have now established all the lemmas directly involved in the proof of the Maximax Minimax Quotient Theorem, but we still have a few claims of continuity to prove.

\section{Continuity of Extrema}\label{sec:continuity}

In Proposition~\ref{prop: r_X,Y well-defined} (iii) we needed the continuity of $\lambda^*$ to prove it has a minimum and in Lemma~\ref{lemma: x_N^*(d) and x_M^*(d) cst on faces} we used the continuity of $x_N^*$ and $y_N^*$.
In this section we will thus prove the continuity of these two maxima functions relying on the Berge Maximum Theorem \citep{inf_dim_analysis}.

\begin{lemma}\label{lemma:phi continuous}
    Let $X$ and $Y$ be two nonempty polytopes in $\mathbb{R}^n$ with $-X \subset Y$. Then, the set-valued function $\varphi : X \times \mathbb{S} \rightrightarrows Y$ defined as $\varphi(x,d) := Y \cap \big\{ \lambda d - x : \lambda \geq 0 \big\}$ satisfies Definition 17.2 of \citep{inf_dim_analysis}.
\end{lemma}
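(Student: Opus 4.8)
The plan is to verify directly that the correspondence $\varphi : X \times \mathbb{S} \rightrightarrows Y$ given by $\varphi(x,d) = Y \cap \{\lambda d - x : \lambda \geq 0\}$ is \emph{continuous} in the sense of Definition 17.2 of \citep{inf_dim_analysis}, i.e. that it is both upper hemicontinuous and lower hemicontinuous with nonempty compact values. First I would record the easy structural facts: for each $(x,d)$, the set $\{\lambda d - x : \lambda \geq 0\}$ is a closed ray, $Y$ is compact and convex, so $\varphi(x,d)$ is a compact convex set (a point or a line segment); and it is nonempty because $-x \in Y$ (from $-X \subset Y$) corresponds to $\lambda = 0$. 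Since $\varphi$ takes values in the fixed compact set $Y$, upper hemicontinuity will follow from showing $\varphi$ has closed graph: if $(x_k,d_k) \to (x,d)$, $y_k \in \varphi(x_k,d_k)$ and $y_k \to y$, then $y \in Y$ (as $Y$ is closed) and $y_k = \lambda_k d_k - x_k$ for some $\lambda_k \geq 0$; the sequence $\lambda_k = \langle y_k + x_k, d_k\rangle$ converges to some $\lambda \geq 0$, whence $y = \lambda d - x \in \varphi(x,d)$.

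The substantive part is lower hemicontinuity: given $(x,d)$, a point $y = \lambda d - x \in \varphi(x,d)$, and a sequence $(x_k,d_k) \to (x,d)$, I must produce $y_k \in \varphi(x_k,d_k)$ with $y_k \to y$. The natural candidate is to take $y_k$ as the point of $\varphi(x_k,d_k)$ with parameter closest to $\lambda$, i.e. $\lambda_k := \operatorname{med}\{0, \lambda, \Lambda_k\}$ where $\Lambda_k$ is the largest admissible parameter for $(x_k,d_k)$ (which exists and equals $\lambda^*(x_k,d_k)$ by Proposition~\ref{prop: r_X,Y well-defined}(ii), using $\dim Y = n$ — or more elementarily, because the ray meets the compact $Y$ in a segment), and then set $y_k := \lambda_k d_k - x_k$. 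Convergence $y_k \to y$ reduces to $\lambda_k \to \lambda$, which holds provided $\liminf_k \Lambda_k \geq \lambda$; here is where the main obstacle lies, and it is exactly the subtlety that $\dim Y = n$ is meant to handle. If $y = \lambda d - x$ lies in the interior of $Y$ (in particular if $\lambda < \Lambda$), a small-ball argument gives $\Lambda_k \geq \lambda$ for large $k$ and we are done; the delicate case is $y \in \partial Y$ with $\lambda = \Lambda$, where the ray from $-x$ in direction $d$ could be tangent to $\partial Y$ and $\Lambda_k$ might drop below $\lambda$ for nearby directions.

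To overcome this I would use convexity: the point $-x \in Y^\circ$ (note $-X \subset Y^\circ$ is available, or at worst pick any interior point $p$ of $Y$, which exists since $\dim Y = n$), so the open segment from $-x$ to $y$ lies in $Y^\circ$. For any $\mu \in [0,\lambda)$ the point $\mu d - x$ is interior to $Y$, hence for $k$ large $\mu d_k - x_k \in Y$ and thus $\Lambda_k \geq \mu$; letting $\mu \uparrow \lambda$ gives $\liminf_k \Lambda_k \geq \lambda$, which is what we needed. Combined with $\Lambda_k \leq M := \sup_{y' \in Y}\|y'\| + \sup_{x' \in X}\|x'\|$ uniformly, the median formula yields $\lambda_k \to \lambda$, hence $y_k \to y$, establishing lower hemicontinuity. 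With both hemicontinuities and nonempty compact values in hand, $\varphi$ satisfies Definition 17.2 of \citep{inf_dim_analysis}, completing the proof.
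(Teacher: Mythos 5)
Your route is genuinely different from the paper's: you use the sequential characterizations of hemicontinuity (closed graph plus compact codomain for upper hemicontinuity, and the approximating-selection criterion for lower hemicontinuity), whereas the paper works with inverse images, showing that $\varphi^l(A)$ is open for $A$ open and closed for $A$ closed. Your upper hemicontinuity argument is correct and, if anything, shorter than the paper's: since $\varphi$ takes values in the fixed compact set $Y$, a closed graph suffices, and the identity $\lambda_k=\langle y_k+x_k,d_k\rangle$ passes to the limit cleanly. For lower hemicontinuity you also put your finger on exactly the right difficulty, namely the case $y\in\partial Y$ with $\lambda=\Lambda$, where the ray can be tangent to $\partial Y$ and the admissible parameter range can collapse under an arbitrarily small perturbation of $d$. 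The paper's own proof silently skips this case: it takes an ``open subset $A$ of $Y$'' and asserts $B_\varepsilon(\lambda d-x)\subset A$, which forces $\lambda d-x\in Y^\circ$ and so never tests the boundary points of $\varphi(x,d)$.

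The one genuine problem in your argument is the hypothesis you lean on to resolve that case. The lemma assumes only $-X\subset Y$, not $-X\subset Y^\circ$, and your fallback of ``picking any interior point $p$ of $Y$'' does not rescue the step, because the ray is anchored at $-x$, not at $p$: if $-x\in\partial Y$, the whole segment from $-x$ to $y$ may lie in $\partial Y$, and the claim that $\mu d-x\in Y^\circ$ for $\mu\in[0,\lambda)$ fails. This is not a repairable detail, since under the stated hypotheses the conclusion itself is false. Take $Y=[-1,1]^2$, $X=\{(-1,0)\}$ (so $-X\subset Y$ but $-X\not\subset Y^\circ$), $d=(0,1)$ and $d_k=(\sin(1/k),\cos(1/k))\to d$; then $\varphi(x,d)=\{1\}\times[0,1]$ while $\varphi(x,d_k)=\{(1,0)\}$ for every $k$, so no selection $y_k\in\varphi(x,d_k)$ converges to $(1,1)\in\varphi(x,d)$ and lower hemicontinuity fails at $(x,d)$. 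Your proof is therefore correct precisely under the strengthened hypothesis $-X\subset Y^\circ$ (together with $\dim Y=n$ to guarantee $Y^\circ\neq\emptyset$), which is what actually holds everywhere the paper invokes this lemma; you should state that strengthening explicitly rather than treat it as optional. In that respect your write-up is more faithful to the geometry than the paper's own argument, but as a proof of the lemma as literally stated it, like the paper's, has a gap at the boundary case.
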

\begin{proof}
    We define $\Omega := X \times \mathbb{S}$, so that $\varphi : \Omega \rightrightarrows Y$. On the space $\Omega$ we introduce the norm $\|\cdot\|_\Omega$ as $\|(x,d)\|_\Omega = \|x\| + \|d\|$. Since $\|\cdot\|$ is the Euclidean norm, $\|\cdot\|_\Omega$ is a norm on $\Omega$.
    By Definition 17.2 of \citep{inf_dim_analysis}, we need to prove that $\varphi$ is both upper and lower hemicontinuous at all points of $\Omega$.
    
    \vspace{2mm}
    
    First, using Lemma~17.5 of \citep{inf_dim_analysis} we will prove that $\varphi$ is lower hemicontinuous by showing that for an open subset $A$ of $Y$, $\varphi^l(A)$ is open. The lower inverse image of $A$ is defined in \citep{inf_dim_analysis} as
    \begin{align*}
        \varphi^l(A) &:= \big\{ \omega \in \Omega : \varphi(\omega) \cap A \neq \emptyset \big\} \\
        &= \big\{(x,d) \in X \times \mathbb{S} : Y \cap \{ \lambda d - x : \lambda \geq 0 \} \cap A \neq \emptyset\big\} \\
        &= \big\{(x,d) \in X \times \mathbb{S} : \{\lambda d - x : \lambda \geq 0\} \cap A \neq \emptyset\big\},
    \end{align*}
    because $A \subset Y$.
    Let $\omega = (x,d) \in \varphi^l(A)$. Then, there exists $\lambda \geq 0$ such that $\lambda d - x \in A$. Since $A$ is open, there exists $\varepsilon > 0$ such that the ball $B_\varepsilon(\lambda d - x) \subset A$. Now let $\omega_1 = (x_1, d_1) \in \Omega$ and denote $\varepsilon_x := \|x_1 - x\|$ and $\varepsilon_d := \|d_1 - d\|$. Then,
    \begin{align*}
        \|\lambda d_1 - x_1 - (\lambda d - x) \| &= \| \lambda (d_1 - d) - (x_1 - x)\| \leq \lambda \varepsilon_d + \varepsilon_x.
    \end{align*}
    Since $\lambda \geq 0$ is fixed, we can choose $\varepsilon_d$ and $\varepsilon_x$ positive and small enough so that $\lambda \varepsilon_d + \varepsilon_x \leq \varepsilon$.
    Then, we have showed that for all $\omega_1 = (x_1, d_1) \in \Omega$ such that $\|\omega - \omega_1\|_\Omega \leq \min(\varepsilon_d, \varepsilon_x)$, i.e., such that $\|x_1 - x\| \leq \varepsilon_x$ and $\|d_1 - d\| \leq \varepsilon_d$, we have $\lambda d_1 - x_1 \in B_\varepsilon(\lambda d - x) \subset A$, i.e., $\omega_1 \in \varphi^l(A)$. Therefore, $\varphi^l(A)$ is open, and so $\varphi$ is lower hemicontinuous.
    
    \vspace{3mm}
    
    To prove the upper hemicontinuity of $\varphi$, we will use Lemma~17.4 of \citep{inf_dim_analysis} and prove that for a closed subset $A$ of $Y$, the lower inverse image of $A$ is closed. Let $\{\omega_k\}$ be a sequence in $\varphi^l(A)$ converging to $\omega = (x,d) \in \Omega$. We want to prove that the limit $\omega \in \varphi^l(A)$.
    
    For $k \geq 0$, we have $\omega_k = (x_k, d_k)$ and define $\Lambda_k := \big\{ \lambda_k \geq 0 : \lambda_k d_k - x_k \in A \big\} \neq \emptyset$. Since $A$ is a closed subset of the compact set $Y$, then $A$ is compact. Thus $\Lambda_k$ has a minimum and a maximum; we denote them by $\lambda_k^{min}$ and $\lambda_k^{max}$ respectively. 
    
    Since sequences $\{d_k\}$ and $\{x_k\}$ converge, they are bounded. The set $A$ is also bounded, thus sequence $\{\lambda_k^{max}\}$ is bounded. Let $\lambda^{max} := \underset{k\, \geq\, 0}{\sup}\ \lambda_k^{max} > 0$.
    
    For $k \geq 0$, we define segments $S_k := \big\{\lambda d_k - x_k : \lambda \in [0, \lambda^{max}] \big\}$, and $S := \big\{\lambda d - x : \lambda \in [0, \lambda^{max}] \big\}$. These segments are all compact sets.
    We also introduce the sequences $a_k := \lambda_k^{min}d_k - x_k \in A \cap S_k$ and $b_k := \lambda_k^{min}d - x \in S$.
    
    Take $\varepsilon > 0$. Since sequences $\{d_k\}$ and $\{x_k\}$ converge toward $d$ and $x$ respectively, there exists $N \geq 0$ such that for $k \geq N$, we have $\|d_k - d\| \leq \frac{\varepsilon}{2 \lambda^{max}}$ and $\|x_k - x\| \leq \frac{\varepsilon}{2}$. Then, for any $\lambda_k \in [0, \lambda^{max}]$ as
    \begin{equation*}
        \| \lambda_k d_k - x_k - (\lambda_k d - x)\| = \| \lambda_k(d_k - d) - (x_k - x)\| \leq \lambda_k \frac{\varepsilon}{2 \lambda^{max}} + \frac{\varepsilon}{2} \leq \varepsilon.
    \end{equation*}
    Since $\lambda_k^{min} \in [0, \lambda^{max}]$, we have $\|a_k - b_k\| \xrightarrow[k \rightarrow \infty]{} 0$. We define the distance between the sets $A$ and $S$
    \begin{equation*}
        dist(A,S) := \min \big\{ \|a - s_\lambda\| : a \in A,\ s_\lambda \in S\big\}.
    \end{equation*}
    The minimum exists because $A$ and $S$ are both compact and the norm is continuous. Since $a_k \in A$ and $b_k \in S$, we have $dist(A, S) \leq \|a_k - b_k\|$ for all $k \geq 0$. Therefore, $dist(A, S) = 0$. So, $A \cap S \neq \emptyset$, leading to $\omega \in \varphi^l(A)$. Then, $\varphi^l(A)$ is closed and so $\varphi$ is upper hemicontinuous.    
\end{proof}

\vspace{3mm}

\begin{lemma}\label{lemma: lambda continuous}
    Let $X$ and $Y$ be two nonempty polytopes in $\mathbb{R}^n$ with $-X \subset Y$. Then, $\lambda^*(x ,d) := \underset{y\, \in\, Y}{\max} \big\{ \|x + y\| : x + y \in \mathbb{R}^+ d \big\}$ is continuous in $x \in X$ and $d \in \mathbb{S}$.
\end{lemma}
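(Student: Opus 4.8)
The plan is to recognize $\lambda^*$ as the value function of the parametric optimization problem whose feasible correspondence is exactly the map $\varphi$ studied in Lemma~\ref{lemma:phi continuous}, and then to invoke the Berge Maximum Theorem \citep{inf_dim_analysis}. The first step is to rewrite the feasible set: for $(x,d) \in X \times \mathbb{S}$, a point $y \in Y$ satisfies $x + y \in \mathbb{R}^+ d$ if and only if $y = \lambda d - x$ for some $\lambda \geq 0$, i.e., if and only if $y \in \varphi(x,d) = Y \cap \{\lambda d - x : \lambda \geq 0\}$. Moreover, on this feasible set $\|x + y\| = \|\lambda d\| = \lambda$ since $\|d\| = 1$ and $\lambda \geq 0$. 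Hence
\begin{equation*}
    \lambda^*(x,d) = \underset{y\, \in\, \varphi(x,d)}{\max}\ \|x + y\|,
\end{equation*}
so $\lambda^*$ is precisely the value function associated with the objective $f(x,d,y) := \|x+y\|$ and the constraint correspondence $\varphi : X \times \mathbb{S} \rightrightarrows Y$.

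Next I would verify the hypotheses of the Berge Maximum Theorem. The objective $f$ is jointly continuous on $(X \times \mathbb{S}) \times Y$ as a composition of continuous maps. The correspondence $\varphi$ takes values in the fixed compact set $Y$; it is nonempty-valued because $-x \in Y$ (from $-X \subset Y$), so $\lambda = 0$ gives $-x \in \varphi(x,d)$; and it is compact-valued since $\varphi(x,d)$ is a closed subset of the compact set $Y$. Finally, $\varphi$ is continuous (both upper and lower hemicontinuous) by Lemma~\ref{lemma:phi continuous}. All assumptions of the Berge Maximum Theorem are therefore met.

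Applying the theorem, the value function $(x,d) \mapsto \underset{y\, \in\, \varphi(x,d)}{\max} f(x,d,y)$ is continuous on $X \times \mathbb{S}$, which is exactly the claim that $\lambda^*$ is continuous in $x$ and $d$. I do not expect a genuine obstacle here: the substance of the argument was already discharged in Lemma~\ref{lemma:phi continuous} (the hemicontinuity of $\varphi$), and the only care needed is the bookkeeping identification of $\lambda^*$ with the Berge value function — in particular the observation that along the feasible set the norm $\|x+y\|$ coincides with the scalar $\lambda$, which makes the maximization over $y \in \varphi(x,d)$ literally the same problem as the one defining $\lambda^*$. As a bonus, since $\|d\| = 1$ the maximizer is the single point $y^*(x,d) = \lambda^*(x,d)\,d - x$, so the continuity of $\lambda^*$ immediately yields the continuity of $y^*$ used elsewhere.
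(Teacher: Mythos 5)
Your proposal is correct and follows essentially the same route as the paper: identify $\lambda^*$ as the value function over the constraint correspondence $\varphi(x,d) = Y \cap \{\lambda d - x : \lambda \ge 0\}$, check that $\varphi$ is nonempty- and compact-valued and continuous via Lemma~\ref{lemma:phi continuous}, and conclude by the Berge Maximum Theorem. The extra bookkeeping you add (the identification $\|x+y\| = \lambda$ on the feasible set and the uniqueness of the maximizer) is consistent with what the paper uses elsewhere.
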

\begin{proof}
    According to Proposition~\ref{prop: r_X,Y well-defined} (ii), whose proof does not rely on the current lemma, $\lambda^*$ is well-defined. We introduce the set-valued function $\varphi : X \times \mathbb{S} \rightrightarrows Y$ defined by $\varphi(x,d) := \big\{ y \in Y : x + y \in \mathbb{R}^+ d \big\} = Y \cap \big( \mathbb{R}^+ d - \{x\} \big)$, where $\mathbb{R}^+ d - \{x\} = \big\{ \lambda d - x : \lambda \geq 0\big\}$. 
    
    We define the graph of $\varphi$ as $\text{Gr}\, \varphi := \big\{ (x,d,y) \in X \times \mathbb{S} \times Y : y \in \varphi(x,d) \big\}$, and the continuous  function $f: \text{Gr}\, \varphi \rightarrow \mathbb{R}^+$ as $f(x,d,y) = \|x + y\|$. Set $X \times \mathbb{S}$ is compact and nonempty. Since $Y$ is compact and $\mathbb{R}^+ d - \{x\}$ is closed, their intersection $\varphi(x,d)$ is compact. 
    Because $-X \subset Y$, for all $x \in X$ we have $-x \in \varphi(x,d)$, so $\varphi(x,d) \neq \emptyset$. According to Lemma~\ref{lemma:phi continuous}, $\varphi$ satisfies Definition $17.2$ of \citep{inf_dim_analysis}. Then, we can apply the Berge Maximum Theorem \citep{inf_dim_analysis} and conclude that $\lambda^*$ is continuous in $x$ and $d$.    
\end{proof}

\vspace{3mm}

\begin{lemma}\label{lemma: continuity of x_N}
    Let $X$ and $Y$ be two nonempty polytopes in $\mathbb{R}^n$ with $-X \subset Y$. Then, the functions $\big( x_N^*, y_N^* \big) (d) = \arg \underset{x\, \in\, X,\, y\, \in\, Y}{\max} \big\{ \|x+y\| : x+y \in \mathbb{R}^+ d \big\}$ are continuous in $d \in \mathbb{S}$.
\end{lemma}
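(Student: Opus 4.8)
The plan is to obtain this from the continuity of $\lambda^*$ proved in Lemma~\ref{lemma: lambda continuous}, together with a parametric-maximum argument, rather than re-running the Berge machinery of Lemmas~\ref{lemma:phi continuous}--\ref{lemma: lambda continuous} on a fresh joint correspondence. The first step is the observation that the $x$-component of any joint maximiser lies in $M(d) := \arg\max_{x\in X}\lambda^*(x,d)$, since $\max_{x\in X,\,y\in Y}\{\|x+y\| : x+y\in\mathbb{R}^+d\} = \max_{x\in X}\lambda^*(x,d) =: g(d)$, and that once $x_N^*(d)$ has been fixed the $y$-component is forced: by the same elementary computation as in the proof of Proposition~\ref{prop: r_X,Y well-defined}(iii), $y_N^*(d) = \lambda^*\big(x_N^*(d),d\big)\,d - x_N^*(d) = g(d)\,d - x_N^*(d)$. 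Hence it is enough to prove that $g$ is continuous and that $x_N^*$ admits a continuous selection from $M$, for then $(x_N^*,y_N^*)(d) = \big(x_N^*(d),\,g(d)\,d - x_N^*(d)\big)$ is continuous in $d$.

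Second, continuity of $g$ follows directly from Lemma~\ref{lemma: lambda continuous}: $\lambda^*$ is continuous, hence uniformly continuous, on the compact set $X\times\mathbb{S}$, and $|g(d)-g(d')|\le \max_{x\in X}|\lambda^*(x,d)-\lambda^*(x,d')|$. The correspondence $M$ has closed graph --- if $x_k\to x$, $d_k\to d$ with $\lambda^*(x_k,d_k)=g(d_k)$, then $\lambda^*(x,d)=g(d)$ by continuity --- and, being contained in the compact set $X$, it is upper hemicontinuous with nonempty compact values; equivalently, one may invoke the Berge Maximum Theorem \citep{inf_dim_analysis} with the constant correspondence $d\mapsto X$ and objective $\lambda^*$. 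A more self-contained route, closer to Lemmas~\ref{lemma:phi continuous}--\ref{lemma: lambda continuous}, would instead introduce $\psi:\mathbb{S}\rightrightarrows X\times Y$ given by $\psi(d):=\{(x,y)\in X\times Y : x+y\in\mathbb{R}^+d\}$, check that it is nonempty-valued (it contains $(x,-x)$ for any $x\in X$ since $-X\subset Y$), compact-valued, and satisfies Definition~17.2 of \citep{inf_dim_analysis} by arguments parallel to Lemma~\ref{lemma:phi continuous}, and then apply the Berge Maximum Theorem to $(x,y)\mapsto\|x+y\|$; this, however, requires proving the lower hemicontinuity of $\psi$ afresh.

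Finally, one must address the fact that $M(d)$ need not be a singleton --- this genuinely occurs, for example when a face of $X$ is parallel to a face of $Y$, so that an entire subpolytope of decompositions attains the maximal length. Where $M(d)$ is single-valued, an upper hemicontinuous single-valued correspondence is continuous, so $x_N^*$ is continuous there; where it is not, one selects $x_N^*(d)\in M(d)$ consistently so as to preserve continuity, which is exactly the choice announced in the proof of Theorem~\ref{thm:varying x_M and x_N}. I expect this selection bookkeeping --- not the hemicontinuity verifications, which are either cited or mirror Lemma~\ref{lemma:phi continuous} almost verbatim --- to be the only genuinely delicate point; the reduction through Lemma~\ref{lemma: lambda continuous} is designed to keep it isolated.
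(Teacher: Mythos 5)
Your reduction is correct as far as it goes, but it takes a genuinely different route from the paper. The paper forms the Minkowski sum $Z := X+Y$ (a polytope containing $0$ because $-X \subset Y$), applies Lemma~\ref{lemma: lambda continuous} a second time to the pair $(\{0\}, Z)$ to obtain continuity of $\lambda^*(0,d) = \max_{z\, \in\, Z}\big\{ \|z\| : z \in \mathbb{R}^+ d\big\}$, and concludes that $z(d) := \lambda^*(0,d)\, d$ is continuous. Since $\lambda^*(0,d)$ equals your $g(d)$, both arguments prove the same fact, namely continuity of the maximal length and hence of the sum $x_N^*(d)+y_N^*(d) = g(d)\, d$; yours does it by an elementary uniform-continuity estimate for the partial maximum $g(d)=\max_{x\in X}\lambda^*(x,d)$, while the paper's cites that a Minkowski sum of polytopes is a polytope and re-runs Lemma~\ref{lemma: lambda continuous}. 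Your observation that $y_N^*(d) = g(d)\, d - x_N^*(d)$ is correct and matches the identity $y^*(x,d)=\lambda^*(x,d)d-x$ from Proposition~\ref{prop: r_X,Y well-defined}. What your route buys is the avoidance of a second pass through the Berge machinery; what it costs is that the multivaluedness of the argmax, which the paper's sum formulation partially hides, is left fully exposed.

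And that exposed step is a genuine gap. Continuity of $g$ gives continuity of the sum, not of the components. When $M(d)=\arg\max_{x\in X}\lambda^*(x,d)$ is not a singleton --- exactly the case, noted in Lemma~\ref{lemma: x_N^*(d) and x_M^*(d) cst on faces}, where $X$ is parallel to a face of $Y$ --- you only assert that one selects $x_N^*(d)\in M(d)$ ``so as to preserve continuity, which is exactly the choice announced in the proof of Theorem~\ref{thm:varying x_M and x_N}.'' That is circular: the theorem's announcement that a continuity-preserving choice exists is precisely what this lemma is supposed to underwrite, and the closed-graph/upper-hemicontinuity property you establish for $M$ does not by itself yield a continuous selection (one would need lower hemicontinuity plus convexity, or a direct verification that $M(d)=X\cap\big(g(d)d-Y\big)$ collapses to the correct endpoint of $X$ at each end of a ``parallel'' arc of directions). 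To be fair, the paper's own proof is no stronger here: its final line identifies the continuous point $z(d)=x_N^*(d)+y_N^*(d)\in\mathbb{R}^n$ with the pair $\big(x_N^*,y_N^*\big)(d)\in X\times Y$, which is legitimate only where the decomposition of $z(d)$ as a sum is unique. So your proposal isolates, and honestly flags, exactly the step the paper elides --- but as written it does not close it, and closing it is the actual content still missing from both arguments.
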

\begin{proof}
    Let $Z := X + Y = \big\{x+y : x \in X,\ y \in Y\big\}$. Then $Z$ is the Minkowski sum of two polytopes, so it is also a polytope \citep{sum_polytopes}. According to Proposition~\ref{prop: r_X,Y well-defined} (i), whose proof does not rely on the current lemma, $\underset{x\, \in\, X,\, y\, \in\, Y}{\max} \big\{ \|x+y\| : x+y \in \mathbb{R}^+ d \big\}$ exists and thus $\underset{z\, \in\, Z}{\max} \big\{ \|z\| : z \in \mathbb{R}^+ d \big\}$ is also well-defined.
    
    Since $-X \subset Y$, for all $x \in X$, $-x \in Y$ and thus $0 \in Z$. Then, $\{0\}$ and $Z$ are two polytopes in $\mathbb{R}^n$ with $\pm 0 \in Z$. According to Lemmma~\ref{lemma: lambda continuous} the function $\lambda^*(0, d) := \underset{z\, \in\, Z}{\max} \big\{ \|z + 0\| : z+0 \in \mathbb{R}^+ d \big\}$ is continuous in $d \in \mathbb{S}$.
    
    Then, we define the continous function $z(d) := \lambda^*(0, d) d \in Z$ for $d \in \mathbb{S}$. Note that $z(d) = \arg\underset{z\, \in\, Z}{\max} \big\{\|z\| : z \in \mathbb{R}^+d \big\} = \big(x_N^*, y_N^*\big)(d)$, so these functions are continuous.
\end{proof}

\section{Illustration}\label{sec:example}

We will now illustrate the Maximax Minimax Quotient Theorem on a simple example. 
We consider polygon $X$ delimited by the vertices $x_1 = (0,-0.5)$ and $x_2 = (0,1)$ in $\mathbb{R}^2$ and polygon $Y$ with vertices $(\pm 1, \pm 2)$ and $(\pm 3, 0)$ as represented on Figure~\ref{fig:sets X Y}.

\begin{figure}[htbp!]
    \centering
    \includegraphics[scale=0.5]{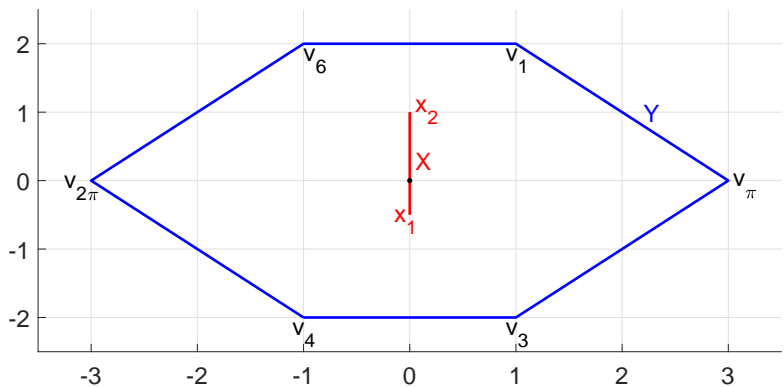} 
    \caption{Illustration of polygons $X$ and $Y$.}
    \label{fig:sets X Y}
\end{figure}

Since $-X \subset Y^\circ$, $\dim X = 1$, $x_2 \neq 0$ and $\dim Y = 2$, the assumptions of the Maximax Minimax Quotient Theorem are satisfied. To illustrate the proof of the theorem, for all $d \in \mathbb{S}$ we define the angle $\beta := \widehat{x_2, d}$ positively oriented clockwise. We also enumerate the vertices in the clockwise direction and we note that $v_2 = v_\pi$ and $v_5 = v_{2\pi}$ as defined in Lemma~\ref{lemma: v_pi and v_2pi}. 
Then, we compute $r_{X,Y}$ for $\beta \in [0, 2\pi)$ as shown on Figure~\ref{fig:r_XY}. The red spikes denote when the ray $d(\beta)$ hits a vertex of $Y$.

\begin{figure}[htbp!]
    \centering
    \includegraphics[scale=0.45]{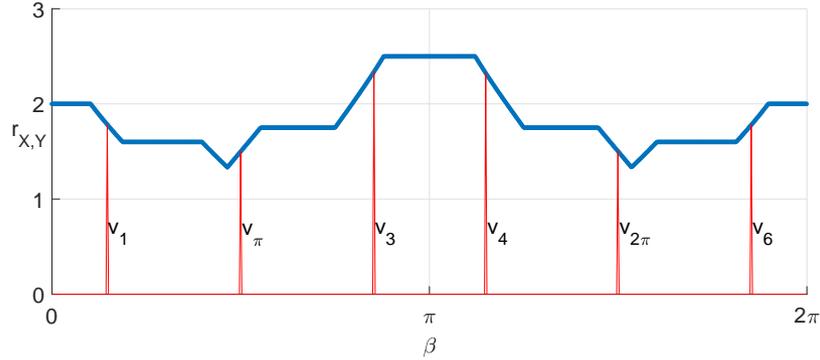} 
    \caption{Graph of $r_{X,Y}$ as a function of $\beta$.}
    \label{fig:r_XY}
\end{figure}

As demonstrated by the Maximax Minimax Quotient Theorem, $r_{X,Y}$ has two local maxima achieved at $\beta = 0$ and $\beta = \pi$. These two values are different because polygon $X$ is not symmetric. Note also that the Maximax Minimax Quotient Theorem does not state that the maximum is \emph{only} reached when $\beta \in \{0, \pi\}$.
Indeed as shown in Figure~\ref{fig:r_XY} and established in Lemma~\ref{lemma:r(d) cst on faces}, $r_{X,Y}$ is constant on the faces of $\partial Y$. Thus, the two local maxima are achieved on the faces $[v_1, v_6]$ and $[v_3, v_4]$.
As proven in Lemma~\ref{lemma: crossing v_pi} and in Lemma~\ref{lemma: beta > pi}, $r_{X,Y}$ reaches a local minimum during the crossing of the vertices $v_\pi$ and $v_{2\pi}$.

A video illustrating the Maximax Minimax Quotient Theorem on a different polytope can be found following the link  \href{https://www.youtube.com/watch?v=rjKzHyDJX40}{\underline{here}} or in the footnote\footnote{\url{https://www.youtube.com/watch?v=rjKzHyDJX40}}.

\section{Conclusion}

In this paper we considered an optimization problem arising from optimal control and pertaining to both fractional programming and max-min programming.
We first justified the existence of the Maximax Minimax Quotient. Then, relying on numerous geometrical arguments and on the continuity of two maxima functions we were able to establish the Maximax Minimax Quotient Theorem.
This result provides an analytical solution to the maximization of a ratio of a maximum and a minimax over two polytopes.
We illustrated our theorem and its proof on a simple example in $\mathbb{R}^2$.
This work filled the theoretical gap left in \citep{SIAM_CT}, and because of our less restrictive assumptions we also open the way for a more general framework than that of \citep{SIAM_CT}.
A possible avenue for future work on this theorem is to study the case where $\dim X > 1$.

\bibliographystyle{abbrv}
\bibliography{references}

\end{document}